\newtheorem{theorem}{Theorem}[section]
\newtheorem{proposition}[theorem]{Proposition}
\newtheorem{lemma}[theorem]{Lemma}
\newtheorem{corollary}[theorem]{Corollary}
\theoremstyle{definition}
\theoremstyle{remark}
\newtheorem{remark}[theorem]{Remark}
\numberwithin{equation}{section}
\begin{document}

\title[Discrete characterizations of wave front sets] {Discrete characterizations of wave front sets of Fourier-Lebesgue and quasianalytic type}

\author[A. Debrouwere]{Andreas Debrouwere}
\thanks{A. Debrouwere gratefully acknowledges support by Ghent University, through a BOF Ph.D.-grant.}

\address{Department of Mathematics, Ghent University, Krijgslaan 281 Gebouw S22, 9000 Gent, Belgium}
\email{Andreas.Debrouwere@UGent.be}

\author[J. Vindas]{Jasson Vindas}
\thanks{The work of J. Vindas was supported by Ghent University through the BOF-grant 01N01014.}
\address{Department of Mathematics, Ghent University, Krijgslaan 281 Gebouw S22, 9000 Gent, Belgium}
\email{jvindas@cage.UGent.be}

\subjclass[2010]{Primary 35A18, 42B05. Secondary 46F05.}
\keywords{Wave front sets; Fourier series; ultradifferentiable functions; quasi\-analytic classes; Fourier-Lebesgue spaces; ultradistributions.}

\begin{abstract}
We obtain discrete characterizations of wave front sets of Fourier-Lebesgue and quasianalytic type. It is shown that the microlocal properties of an ultradistribution can be obtained by sampling the Fourier transforms of its localizations over a lattice in $\mathbb{R}^{d}$. In particular, we prove the following discrete characterization of the analytic wave front set of a distribution $f\in\mathcal{D}'(\Omega)$. Let $\Lambda$ be a lattice in $\mathbb{R}^{d}$ and let $U$ be an open convex neighborhood of the origin such that $U\cap\Lambda^{*}=\{0\}$. The analytic wave front set $WF_{A}(f)$ coincides with the complement in $\Omega\times(\mathbb{R}^{d}\setminus\{0\})$ of the set of points $(x_0,\xi_0)$ for which there are an open neighborhood $V\subset \Omega\cap (x_0+U)$ of $x_0$,  an open conic neighborhood $\Gamma$ of $\xi_0$, and a bounded sequence $(f_p)_{p \in \mathbb{N}}$  in $\mathcal{E}'(\Omega\cap (x_0+U))$ with $f_p= f$ on $V$ such that for some $h > 0$
\[
  \sup_{\mu \in \Gamma \cap \Lambda} |\widehat{f_p} (\mu)| |\mu|^p  \leq h^{p+1}p!\:, \qquad \forall p \in \mathbb{N}. 
   \]

\end{abstract}

\maketitle


\section{Introduction}
In this article we provide discrete characterizations of wave front sets of various types. We shall show that the microlocal properties of (ultra)distributions are completely determined  by the decay properties of the restrictions of Fourier transforms of their localizations to an arbitrary lattice in $\mathbb{R}^{d}$. To this end, we also study Fourier series expansions of ultradistributions. The paper refines and extends earlier results on \emph{toroidal} wave front sets from \cite{Maksi,RT}. 

Wave front sets play a fundamental role in the analysis of propagation of singularities of solutions to partial differential equations. The classical wave front set, originally introduced by H\"{o}rmander \cite{Hormander1971}, is defined with respect to $C^{\infty}$-smoothness, but this concept can be refined to include wave front sets with respect to other smoothness scales, such as Denjoy-Carleman classes and, in particular, analyticity  \cite{Hormander, Komatsutwee, Rodino}. More recently, wave front sets with respect to Fourier-Lebesgue spaces and other classes of Banach and Fr\'{e}chet spaces have been introduced and studied in \cite{c-j-t1,c-j-t2,Pilipovicthree,Pilipovictwo}. All of these variants have been systematically applied to the study of regularity properties of various classes of pseudo-differential operators and semilinear equations. The notion of wave front set has also relevant applications in mathematical physics, see the expository article \cite{b-d-h} for an overview.

The question of whether the wave front set of a distribution can be described in a discrete fashion goes back to Ruzhansky and Turunen \cite{RT}. Naturally, this is a very important question from a computational point of view. 
Their work is motivated by the study of (global) quantization of periodic pseudo-differential operators through Fourier series \cite{RT,Ruzh}. Denoting as $WF^{\mathbb{T}^{d}}(f)$ the ($C^{\infty}$) toroidal wave front set of a distribution $f$ on the $d$-dimensional torus $\mathbb{T}^{d}$, they have established the equality \cite[Sect. 7] {RT}
\begin{equation}
\label{RTwavefront}
WF^{\mathbb{T}^{d}}(f)=WF(f)\cap(\mathbb{T}^{d}\times \mathbb{Z}^{d}),
\end{equation}
where $WF(f)$ stands for the classical H\"{o}rmander wave front set when regarding $f$ as a distribution on $\mathbb{R}^{d}$. The latter equality further extends to Sobolev-type and Gevrey wave front sets, as recently shown in \cite{d-m-s,Maksi}. It should also be mentioned that Rodino and Wahlberg \cite{Rodino-W} and Johansson et al. \cite{Johansson} have investigated discrete definitions of microregularity properties of distributions via Gabor frames. 

Our goal here is to generalize (\ref{RTwavefront}) in several directions. On the one hand, we prove that the equality (\ref{RTwavefront}) remains valid for wave front sets of Fourier-Lebesgue and quasianalytic types. On the other hand, we replace $\mathbb{Z}^{d}$ in (\ref{RTwavefront}) by an arbitrary lattice in $\mathbb{R}^{d}$. Since we will actually reformulate the equality (\ref{RTwavefront}) in slightly different terms, namely, in terms of discretized estimates for Fourier transforms, the arbitrariness of the lattice strengthens the potential computational content of the results.

We now briefly describe the content of the paper and state some samples of our results. We mention that we will work with both the Beurling-Bj\"{o}rck \cite{Bjorck} and the Komatsu \cite{Komatsu} approach to the theory of ultradifferentiable functions and ultradistributions (see the preliminary Section \ref{preliminaries} for the notation). It turns out that many of the arguments employed in the article depend upon the use of Fourier series of ultradistributions with respect to a lattice. In the case of $\omega$-ultradistributions, to the best of our knowledge, this topic is not available in the literature. Section \ref{periodic ultradistributions} gives a concise presentation of the theory of Fourier series expansions of periodic ultradistributions.

Section \ref{Section F-L} is dedicated to discrete characterizations of the Fourier-Lebesgue wave front set of a non-quasianalytic $\omega$-ultradistribution with respect to a so-called $\omega$-moderate weight. In particular, our considerations apply to the wave front sets $WF_{(\omega)}$ and $WF_{\{\omega\}}$. It is worth mentioning that the latter two classes of wave front sets have been recently studied by Albanese et al. \cite{a-j-o} and Fern\'{a}ndez et al. \cite{f-g-j} in connection with regularity of solutions to linear PDE. Note that wave front sets with respect to Fourier-Lebesgue spaces were originally introduced in  \cite{j-p-t-t,Pilipovicthree,Pilipovictwo}, but we remark that here we allow weights with much larger growth than those considered in the forementioned works. In addition, our results apply to more general classes of ultradistributions. The authors believe that the Beurling-Bj\"{o}rck theory is the most natural framework for microlocal analysis based on Fourier-Lebesgue spaces. 

Our results from Section \ref{Section F-L} already include discrete characterizations of Gevrey $\{s\}$- and $(s)$-microregularity for $s>1$ \cite{Rodino}. Moreover, they also contain the case of microregularity with respect to certain classes of non-quasianalytic weight sequences. In Section \ref{section quasianalytic} we further extend our analysis to weight sequences satisfying milder assumptions. In the non-quasianalytic case, we shall show the following theorem. A lattice $\Lambda$ in $\mathbb{R}^{d}$ is simply a discrete subgroup of $\mathbb{R}^{d}$ which spans the real vector space $\mathbb{R}^{d}$. The dual lattice of $\Lambda$ is the discrete group $\Lambda^{\ast}=\{\mu^{\ast}\in\mathbb{R}^{d}:\: \mu\cdot \mu^{\ast}\in\mathbb{Z}, \:\forall \mu\in\Lambda\}$.

\begin{theorem} 
\label{th WF non-quasianalytic} Let $(M_p)_{p\in\mathbb{N}}$ be a weight sequence satisfying the conditions $(M.1)$, $(M.2)'$, and $(M.3)'$ and having associated function $M$ (see Subsection \ref{subsection sequences}). Suppose that  $\Lambda$ is a lattice in $\mathbb{R}^{d}$ and let $U$ be an open convex neighborhood of the origin such that $U\cap\Lambda^{*}=\{0\}$. The Roumieu wave front set $WF_{\{M_p\}}(f)$ $($the Beurling wave front set $WF_{(M_p)}(f)$$)$ of an ultradistribution $f\in{\mathcal{D}^{(M_p)}}' (\Omega)$ coincides with the complement in $\Omega\times(\mathbb{R}^{d}\setminus\{0\})$ of the set of all points $(x_0,\xi_0)$ for which 
there exist an open conic neighborhood $\Gamma$ of $\xi_0$ and $\varphi \in \mathcal{D}^{(M_p)}(\Omega\cap(x_0+U))$ with $\varphi\equiv1$ in a neighborhood of $x_0$  such that for some $r > 0$ $($for every $r>0$$)$
$$
\sup_{\mu \in \Gamma\cap \Lambda} |\widehat{\varphi f}(\mu)| e^{M(r\mu)} < \infty. 
$$
\end{theorem}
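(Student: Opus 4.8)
The plan is to show the two inclusions separately, reducing in each case to the already-available continuous characterization of $WF_{\{M_p\}}(f)$ (resp.\ $WF_{(M_p)}(f)$) in terms of the rapid decay of $\widehat{\varphi f}(\xi)$ for $\xi$ ranging over a full conic neighborhood of $\xi_0$ in $\mathbb{R}^d$, and then exploiting the Fourier series machinery for periodic ultradistributions developed in Section~\ref{periodic ultradistributions}. Fix $(x_0,\xi_0)$ and, after translating, assume $x_0=0$, so that $U$ is an open convex $0$-neighborhood with $U\cap\Lambda^*=\{0\}$.

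For the easy direction — if $(x_0,\xi_0)\notin WF_{\{M_p\}}(f)$ then the discrete estimate holds — I would take $\varphi\in\mathcal{D}^{(M_p)}(\Omega\cap(x_0+U))$ with $\varphi\equiv1$ near $x_0$ and a conic neighborhood $\Gamma$ of $\xi_0$ such that $\sup_{\xi\in\Gamma}|\widehat{\varphi f}(\xi)|e^{M(r\xi)}<\infty$ for some $r>0$; restricting $\xi$ to $\Gamma\cap\Lambda$ is immediate. The Beurling case is identical with "for some $r$" replaced by "for every $r$". So the substance of the theorem is the converse.

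For the converse, suppose the discrete estimate holds: there are a conic neighborhood $\Gamma$ of $\xi_0$, a function $\varphi\in\mathcal{D}^{(M_p)}(\Omega\cap(x_0+U))$ with $\varphi\equiv1$ near $x_0$, and $r>0$ with $\sup_{\mu\in\Gamma\cap\Lambda}|\widehat{\varphi f}(\mu)|e^{M(r\mu)}<\infty$. The key point is that $g:=\varphi f$ is a compactly supported ultradistribution whose support lies in $x_0+U$, and since $U\cap\Lambda^*=\{0\}$, the translates of $x_0+U$ by $\Lambda^*$ are "separated" enough that $g$ can be $\Lambda^*$-periodized without overlap: one forms the periodic ultradistribution $\widetilde g=\sum_{\lambda\in\Lambda^*}g(\cdot+\lambda)$ on $\mathbb{R}^d/\Lambda^*$, whose Fourier coefficients (relative to the dual group $\Lambda$) are exactly $c_\mu(\widetilde g)=\widehat g(\mu)/|\det\Lambda^*|$ for $\mu\in\Lambda$, up to a normalizing constant. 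Thus the discrete hypothesis says precisely that the Fourier coefficients of $\widetilde g$ decay like $e^{-M(r\mu)}$ along the cone $\Gamma\cap\Lambda$. Now I would invoke the theory of Section~\ref{periodic ultradistributions}: a periodic ultradistribution whose Fourier coefficients along a cone decay at the rate $e^{-M(r\mu)}$ is, microlocally in that cone, an ultradifferentiable function of class $\{M_p\}$ — equivalently, one produces from $\widetilde g$ a genuine ultradifferentiable function $G$ such that $\widetilde g - G$ has Fourier coefficients supported (up to rapidly decreasing error) outside a slightly smaller cone $\Gamma'\Subset\Gamma$. Localizing again with a cutoff $\psi\in\mathcal{D}^{(M_p)}$, $\psi\equiv1$ near $x_0$, $\operatorname{supp}\psi\subset\{\varphi\equiv1\}$, one gets $\psi f=\psi\widetilde g$ near $x_0$, and then the estimate $\sup_{\xi\in\Gamma''}|\widehat{\psi f}(\xi)|e^{M(r'\xi)}<\infty$ on a still smaller (now genuinely open) cone $\Gamma''$ follows by a standard interpolation/Paley--Wiener argument that passes from decay on the lattice points $\Gamma\cap\Lambda$ to decay on the continuous cone, using the analyticity of $\widehat{\psi f}$ and a bound on $|\widehat{\psi f}(\xi+\eta)-\widehat{\psi f}(\xi)|$ for $|\eta|$ bounded by the mesh of $\Lambda$ (here the growth conditions $(M.1)$, $(M.2)'$, $(M.3)'$ on the weight sequence enter, guaranteeing that the loss $M(r\xi)\rightsquigarrow M(r'\xi)$ is absorbable). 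This yields $(x_0,\xi_0)\notin WF_{\{M_p\}}(f)$, and the Beurling variant runs verbatim with quantifiers on $r$ adjusted.

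The main obstacle I anticipate is the passage from lattice decay to continuous-cone decay of $\widehat{\psi f}$: one must control the difference of $\widehat{\psi f}$ at a point $\xi$ and at the nearest lattice point, uniformly in the direction $\xi/|\xi|$ inside the cone, and show that the inevitable loss in the weight $M$ — coming both from the mesh-size perturbation and from shrinking the cone — is compatible with the defining estimates of the Roumieu (resp.\ Beurling) wave front set; this is exactly where properties $(M.2)'$ and $(M.3)'$ (and the resulting subadditivity-type estimates for the associated function $M$) do the work, and where the hypothesis $U\cap\Lambda^*=\{0\}$ is essential to make the periodization step legitimate. A secondary technical point is checking that the periodization $\widetilde g$ and its Fourier coefficients are well-defined in the $\{M_p\}$-ultradistributional setting with the correct normalization — but this is supplied by Section~\ref{periodic ultradistributions}.
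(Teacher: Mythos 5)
Your overall architecture (trivial restriction for one inclusion; periodization over $\Lambda^{*}$ and Fourier coefficients $c_\mu\propto\widehat{\varphi f}(\mu)$ for the converse) matches the paper's, and the reduction to the single-cutoff continuous characterization of $\{M_p\}$- and $(M_p)$-microregularity is legitimate in the non-quasianalytic case. The gap is in the one step that carries all the weight: passing from decay of $\widehat{\psi f}$ on $\Gamma\cap\Lambda$ to decay on a full open subcone. The interpolation argument you propose, controlling $|\widehat{\psi f}(\xi+\eta)-\widehat{\psi f}(\xi)|$ for $|\eta|$ at most the mesh size, is circular: by the mean value theorem this difference is governed by $\nabla\widehat{\psi f}=\mathcal{F}(-2\pi i x\,\psi f)$, which is again the Fourier transform of a localization of $f$ evaluated at \emph{non-lattice} points of the cone --- exactly the quantity you are trying to estimate. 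A genuine Plancherel--P\'olya/Paley--Wiener sampling inequality would have to be proved in a weighted, conic setting where it is not standard, and nothing in $(M.1)$, $(M.2)'$, $(M.3)'$ hands it to you. (The paper does use the gradient bound, but only in the harmless opposite direction, continuous $\Rightarrow$ discrete, for $q<\infty$ in the Fourier--Lebesgue setting.)

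The paper closes this gap with two ingredients you are missing. First, a strengthening lemma (Lemma \ref{mainlemma}$(i)'$, and Lemma \ref{lemmaqa} in the weight-sequence setting): the discrete estimate for the single cutoff $\varphi$ implies the same discrete estimate \emph{uniformly over every bounded set} $B$ of cutoffs supported in a smaller neighborhood of $x_0$. This is proved by exactly the periodization identity you describe, writing $(\psi f)_{\mathfrak{p}_{\Lambda^*}}=(\varphi f)_{\mathfrak{p}_{\Lambda^*}}(\psi_1)_{\mathfrak{p}_{\Lambda^*}}$, so that $\widehat{\psi f}(\mu)=\sum_{\beta\in\Lambda}\widehat{\varphi f}(\beta)\,\widehat{\psi_1}(\mu-\beta)$, and splitting the sum according to whether $\beta$ lies in the cone (Young's inequality plus the hypothesis) or outside it (rapid decay of $\widehat{\psi_1}$ plus the geometric separation $|\mu-\beta|\geq m^{-1}\max(|\mu|,|\beta|)$). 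Second, the modulation trick: any $\xi\in\Gamma_2$ with $|\xi|\geq r$ is written as $\xi=\mu+t$ with $\mu\in\Gamma_1\cap\Lambda$ and $t\in I_\Lambda$, and one uses the exact identity $\widehat{\psi f}(\mu+t)=\mathcal{F}(e_{-t}\psi f)(\mu)$ together with the fact that $\{e_{-t}\psi:\ t\in I_\Lambda\}$ is a bounded set of cutoffs, to which the strengthened lemma applies. No interpolation and no error term ever appear. Your intermediate decomposition of the periodization into an ultradifferentiable part $G$ plus a remainder microsupported off the cone is an unnecessary detour and is itself unjustified as stated. Finally, note that the paper actually derives this theorem as the special case $N_p=M_p$ of Theorem \ref{mainqa}, via H\"ormander-style analytic cut-off sequences and bounded sequences $(f_p)_p$, precisely because under only $(M.1)$, $(M.2)'$, $(M.3)'$ the reduction to a weight function $\omega$ (and hence to Corollary \ref{corollarywfomega}) is unavailable; your plan implicitly assumes that reduction-free continuous characterization, which is acceptable, but the lattice-to-continuum step still needs the two ingredients above.
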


Our main result from Section \ref{section quasianalytic}, Theorem \ref{mainqa}, actually covers quasianalytic wave front sets. Specializing Theorem \ref{mainqa} to the analytic wave front set of a distribution, one obtains:

\begin{theorem} 
\label{th WF analytic} Let $\Lambda$ be a lattice in $\mathbb{R}^{d}$ and let $U$ be an open convex neighborhood of the origin such that $U\cap\Lambda^{*}=\{0\}$. The analytic wave front set $WF_{A}(f)$ of a distribution $f \in \mathcal{D}'(\Omega)$ coincides with the complement in $\Omega\times(\mathbb{R}^{d}\setminus\{0\})$ of the set of points $(x_0,\xi_0)$ for which there are an open neighborhood $V \subseteq \Omega\cap(x_0+U)$ of $x_0$,  an open conic neighborhood $\Gamma$ of $\xi_0$, and a bounded sequence $(f_p)_{p \in \mathbb{N}}$  in $\mathcal{E}'(\Omega\cap(x_0+U))$, with $f_p= f$ on $V$ for all $p \in \mathbb{N}$, such that for some $h > 0$
\[
  \sup_{\mu \in \Gamma \cap \Lambda} |\widehat{f_p} (\mu)| |\mu|^p  \leq h^{p+1}p!\:, \qquad \forall p \in \mathbb{N}. 
   \]

\end{theorem}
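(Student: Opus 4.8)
The plan is to deduce Theorem~\ref{th WF analytic} as a special case of the main quasianalytic result, Theorem~\ref{mainqa}, rather than proving it from scratch. First I would recall that the analytic wave front set $WF_A(f)$ coincides with the Roumieu-type quasianalytic wave front set $WF_{\{M_p\}}(f)$ associated with the weight sequence $M_p = p!$ (equivalently $M_p = (p!)^{s}$ at $s=1$), and that $p!$ satisfies the hypotheses under which Theorem~\ref{mainqa} applies in the quasianalytic regime. The associated function of $M_p=p!$ is, up to the usual equivalences, $M(t)\sim |t|$, or more precisely $e^{M(t)}$ behaves like $\sup_p |t|^p/p!$, so the quasianalytic-type estimate in Theorem~\ref{mainqa} should read off as a family of bounds $|\widehat{f_p}(\mu)|\,|\mu|^p \le h^{p+1}p!$ indexed by $p$, exactly the estimate appearing in the statement. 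The role of the bounded sequence $(f_p)$ in $\mathcal{E}'(\Omega\cap(x_0+U))$ agreeing with $f$ near $x_0$ is the standard device (as in H\"ormander's treatment of $WF_A$) replacing the single cutoff used in the non-quasianalytic case, since no compactly supported analytic cutoffs exist; Theorem~\ref{mainqa} is presumably already phrased in this language.

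Concretely, the key steps I would carry out are: (i) invoke the classical identification $WF_A(f) = WF_{\{p!\}}(f)$ for $f\in\mathcal{D}'(\Omega)$, citing H\"ormander or Komatsu; (ii) check that the sequence $M_p=p!$ verifies the structural conditions assumed in Theorem~\ref{mainqa} (logarithmic convexity $(M.1)$, and whichever mild growth/stability conditions are imposed there — these are classical for $p!$); (iii) translate the conclusion of Theorem~\ref{mainqa}, which is stated in terms of $e^{M(r\mu)}$-type or $\sup_p$-type decay of $|\widehat{f_p}(\mu)|$ over $\Gamma\cap\Lambda$, into the explicit factorial estimate $\sup_{\mu\in\Gamma\cap\Lambda}|\widehat{f_p}(\mu)|\,|\mu|^p \le h^{p+1}p!$ by using the elementary two-sided comparison between $e^{M(t)}$ for $M_p=p!$ and $\sup_{p}(|t|^p/(h^p p!))$; and (iv) record that the hypothesis $U\cap\Lambda^{*}=\{0\}$ with $U$ open convex is inherited verbatim from Theorem~\ref{mainqa}, which is where the Fourier-series machinery from Section~\ref{periodic ultradistributions} does the real work (it guarantees that sampling $\widehat{\varphi f}$ or $\widehat{f_p}$ on $\Lambda$ loses no microlocal information, via a Poisson-summation/periodization argument on the fundamental domain).

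The step I expect to be the genuine content is already subsumed in Theorem~\ref{mainqa}, so here the only real obstacle is bookkeeping: making sure the quantifier structure (``for some $h>0$, for all $p$'' with a bounded sequence, on the Roumieu side) matches precisely what Theorem~\ref{mainqa} delivers, and that the translation from the associated-function formulation to the factorial formulation is the right one — in particular that the constant $h$ can be chosen uniformly in $p$ and that boundedness of $(f_p)$ in $\mathcal{E}'$ is both implied by and sufficient for the estimate to characterize $WF_A$. I would therefore keep the proof short: state the reduction, verify the two elementary comparisons (conditions on $p!$, and $e^{M}\leftrightarrow\sup_p$), and refer to Theorem~\ref{mainqa} for everything else.
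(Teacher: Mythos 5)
Your reduction is exactly the paper's own route: Theorem~\ref{th WF analytic} is obtained there by specializing Theorem~\ref{mainqa} (the equivalence $(i)\Leftrightarrow(iv)$) to the regularity sequence $N_p=p!$, invoking the identification $WF_A=WF_{\{p!\}}$, and using Remark~\ref{rk1Fs} to replace bounded sequences in ${\mathcal{E}^{(M_p)}}'$ by bounded sequences in the classical $\mathcal{E}'(\Omega\cap(x_0+U))$. One small correction to your step (iii): you should match the statement against condition $(i)$ of Theorem~\ref{mainqa}, which is already in the factorial form $AC^pN_p\leq h^{p+1}p!$, and not against the associated-function conditions $(v)$--$(vi)$, since those require $(N_p)$ to satisfy $(M.3)'$ and $p!$ is quasianalytic.
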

\section{Preliminaries}
\label{preliminaries}
In this section we fix the notation and explain the spaces of ultradifferentiable functions and ultradistributions needed in this article. 

\subsection{Spaces defined via weight functions}
\label{subsection weights} We start with the Beurling-Bj\"{o}rck approach to ultradistribution theory via weight functions \cite{Bjorck} (see also \cite{b-m-t}). A weight function on $\mathbb{R}^{d}$ is simply a non-negative measurable function. Throughout the article we shall always assume that $\omega$ is an even weight function satisfying $\omega(0)=0$ and the following three conditions:
\begin{itemize}
\item[$(\alpha)$]  $\omega(\xi_1 + \xi_2) \leq \omega(\xi_1) + \omega(\xi_2), \quad \forall \xi_1,\xi_2 \in \mathbb{R}^d,$
\item[$(\beta)$] $\displaystyle \int_{|\xi| > 1} \frac{\omega(\xi)}{|\xi|^{d + 1}} \mathrm{d}\xi < \infty,$
\item[$(\gamma)$] there exist $a \in \mathbb{R}$ and $C > 0$ such that $\omega(\xi) \geq a + C\log (1 + |\xi|), \quad \forall \xi \in \mathbb{R}^d.$
\end{itemize}
It should be noticed that measurability and subadditivity, namely, condition $(\alpha)$, ensure that $\omega$ is locally bounded \cite{beurling,hilleph}. 

Let $\Omega \subseteq \mathbb{R}^d$ be open, $K \Subset\Omega$ (a compact subset in $\Omega$) and $\lambda>0$. The Banach space $\mathcal{D}_{\omega}^\lambda(K)$ consists of those $\varphi  \in L^{1}(\mathbb{R}^d)$ such that $\operatorname*{supp} \varphi \subseteq K$ and
\[\| \varphi \|_\lambda= \| \varphi \|_{\omega,\lambda}:= \sup_{\xi \in \mathbb{R}^d} |\widehat{\varphi}(\xi)|e^{\lambda \omega(\xi)} < \infty.\]
Set further
\[ \mathcal{D}_{(\omega)}(\Omega) = \varinjlim_{K \Subset \Omega} \varprojlim_{\lambda \to \infty} \mathcal{D}_{\omega}^\lambda(K).\]
Condition $(\gamma)$ yields  $\mathcal{D}_{(\omega)}(\Omega)\subseteq \mathcal{D}(\Omega)$. Its dual $\mathcal{D}_{(\omega)}'(\Omega)$ is the ultradistribution space of class $(\omega)$ (or  Beurling type). We define $\mathcal{E}_{(\omega)}(\Omega)$ as the space of multipliers of $\mathcal{D}_{(\omega)}(\Omega)$, that is, a function  $\varphi\in\mathcal{E}_{(\omega)}(\Omega)$ if and only if $\varphi\psi \in\mathcal{D}_{(\omega)}(\Omega)$ for all $\psi \in \mathcal{D}_{(\omega)}(\Omega)$. Its topology is generated by the family of seminorms
$ \varphi \to  \| \varphi\psi \|_{\lambda}$, $ \lambda > 0,$ $\psi \in \mathcal{D}_{(\omega)}(\Omega). $ Clearly,
$\mathcal{E}'_{(\omega)}(\Omega)$ is the subspace of $\mathcal{D}_{(\omega)}'(\Omega)$ consisting of ultradistributions with compact support. The space $\mathcal{S}_{(\omega)}(\mathbb{R}^d)$ consists of all those $\varphi\in C^\infty(\mathbb{R}^d)$ such that
\[\sup_{x \in \mathbb{R}^d}|\varphi^{(\alpha)}(x)|e^{\lambda \omega(x)} < \infty \quad\mbox{and}\quad \sup_{\xi \in \mathbb{R}^d}|\widehat{\varphi}^{(\alpha)}(\xi)|e^{\lambda \omega(\xi)} < \infty, \quad \forall\lambda > 0,\: \forall\alpha\in\mathbb{N}^{d};\]
its Fr\'{e}chet space topology being defined in the canonical way. We shall fix the constants in the Fourier transform as
\[ \mathcal{F} \varphi (\xi) = \widehat{\varphi}(\xi) = \int_{\mathbb{R}^d} \varphi(x) e^{-2\pi i \xi \cdot x} \mathrm{d}x. \]
Since the Fourier transform is an automorphism of  $\mathcal{S}_{(\omega)}(\mathbb{R}^d)$, it can be extended by duality to $\mathcal{S}_{(\omega)}'(\mathbb{R}^d)$, the so-called space of $(\omega)$-tempered ultradistributions.  

Naturally, one may also define the spaces of class $\{\omega\}$  (Roumieu type)  $\mathcal{D}_{\{\omega\}}(\Omega)$, $\mathcal{D}'_{\{\omega\}}(\Omega)$, $\mathcal{E}_{\{\omega\}}(\Omega)$, $\mathcal{E}'_{\{\omega\}}(\Omega)$, $\mathcal{S}_{\{\omega\}}(\mathbb{R}^d)$, and $\mathcal{S}'_{\{\omega\}}(\mathbb{R}^d)$ by simply switching the universal quantifier $\forall$ over $\lambda$ to an existential one. 
When considering these spaces, we shall always assume that $\omega$ satisfies a stronger condition than $(\gamma)$, namely,
\begin{itemize}\item[$(\gamma_{0})$] $\displaystyle\lim_{|\xi|\to\infty} \frac{\omega(\xi)}{\log (1 + |\xi|)}=\infty$.
\end{itemize}
For instance, 
\[\mathcal{D}_{\{\omega\}}(\Omega) = \varinjlim_{K \Subset \Omega} \varinjlim_{\lambda \to 0^+} \mathcal{D}_{\omega}^\lambda(K),
\]
and $(\gamma_0)$ ensures that $\mathcal{D}_{\{\omega\}}(\Omega)\subsetneq \mathcal{D}(\Omega)$.

Note that if $\omega(\xi)=\log(1+|\xi|)$, one then recovers the classical Schwartz spaces as particular instances of the Beurling case. Another 
important example of a weight function is provided by the Gevrey weights $\omega(\xi) = |\xi|^{\frac{1}{s}},$  $s>1$; in such a case one obtains the well known Gevrey function and ultradistribution spaces \cite{Rodino,j-p-t-t}.

We shall also work with weighted Fourier-Lebesgue spaces \cite[Chap. II]{Bjorck}. A weight function $v$ is said to be $(\omega)$-moderate  ($\{\omega\}$-moderate) if there are $C, \lambda > 0$ (for every $\lambda > 0$ there is $C=C_{\lambda}>0$) such that
\begin{equation} 
v(\xi_1 + \xi_2) \leq Cv(\xi_1)e^{\lambda \omega(\xi_2)}, \quad \forall \xi_1,\xi_2 \in \mathbb{R}^d. 
\label{moderate}
\end{equation}
The classes of all $(\omega)$-moderate and $\{\omega\}$-moderate weight functions are denoted by $\mathscr{M}_{(\omega)}$ and $\mathscr{M}_{\{\omega\}}$, respectively. Let $q \in [1, \infty]$; the (weighted) Fourier--Lebesgue space $\mathcal{F} L^q_v$ (with respect to $v \in \mathscr{M}_{(\omega)}$) is the Banach space of all $f \in \mathcal{S}_{(\omega)}'(\mathbb{R}^d)$ such that $\widehat{f} \in L^q_v(\mathbb{R}^{d})$, that is, $\widehat{f}$ is locally integrable and  $\| f\|_{\mathcal{F} L^q_{v}}:=\| \widehat{f}\|_{L^q_{v}}=\| \widehat{f}\: v\|_{L^q} < \infty$. Clearly, if $v\in\mathscr{M}_{\{\omega\}}$, then $\mathcal{F} L^q_v\subset  \mathcal{S}_{\{\omega\}}'(\mathbb{R}^d)$.

\subsection{Weight sequences}
\label{subsection sequences} Another useful and widely used approach to the theory of ultradifferentiable functions and ultradistributions is via weight sequences \cite{Komatsu}. Let $(M_p)_{p \in \mathbb{N}}$ be a sequence of positive real numbers (with $M_0=1$). We will make use of some of the following conditions:\begin{itemize}
\item [$(M.1)\:$] $M^{2}_{p}\leq M_{p-1}M_{p+1},$  $p\geq 1$,
\item [$(M.2)'$]$M_{p+1}\leq A H^p M_p$, $p\in\mathbb{N}$, for some $A,H>0$,
\item [$(M.2)\:$] $ \displaystyle M_{p}\leq A H^p\min_{1\leq q\leq p} \{M_{q} M_{p-q}\},$ $p\in\mathbb{N}$, for some $A,H>0$,
\item [$(M.3)'$] $\displaystyle \sum_{p=1}^{\infty}\frac{M_{p-1}}{M_{p}}<\infty. $
\end{itemize}
The associated function of $(M_p)_{p}$ is defined as
 $$M(t):=\sup_{p\in\mathbb{N}}\log\frac{t^p}{M_p},\quad t>0.$$
Its log-convex regularization is the sequence $\displaystyle M^{c}_{p}= \sup_{t>0}\frac{t^{p}}{e^{M(t)}}$, $p\in\mathbb{N}$, the greatest log-convex minorant of $(M_p)_{p}$; note that $(M_p)_{p}$ satisfies (M.1) if and only if $(M_p)_{p}=(M_p^{c})_{p}$ (see \cite{Komatsu}).  As usual, the relation $M_p\subset N_p$ between two weight sequences means that there are $C,h>0$ such that 
$M_p\leq Ch^{p}N_{p},$ $p\in\mathbb{N}$. The stronger relation $M_p\prec N_p$ means that the latter inequality remains valid for every $h>0$ and a suitable $C=C_{h}>0$. 

Let $\Omega \subseteq \mathbb{R}^d$ be open. For $K \Subset \Omega$ and $h > 0$, one writes $\mathcal{E}^{\{M_p\},h}(K)$ for the space of all $\varphi \in C^\infty(\Omega)$ such that
\[ \| \varphi \|_{\mathcal{E}^{\{M_p\},h}(K)} := \sup_{\substack{x \in K \\ \alpha \in \mathbb{N}^d}} \frac{|\varphi^{(\alpha)}(x)|}{h^{|\alpha|}M_{|\alpha|}} < \infty, \]
and $\mathcal{D}^{\{M_p\},h}_{K}$ stands for the closed subspace of $\mathcal{E}^{\{M_p\},h}(K)$ consisting of functions with compact support in $K$. Further on,
\[ \mathcal{D}^{(M_p)}(\Omega) = \varinjlim_{K \Subset \Omega} \varprojlim_{h \rightarrow 0^+} \mathcal{D}^{\{M_p\},h}_{K}, \qquad \mathcal{D}^{\{M_p\}}(\Omega) = \varinjlim_{K \Subset \Omega} \varinjlim_{h \rightarrow \infty} \mathcal{D}^{\{M_p\},h}_{K}, \]

and
\[ \mathcal{E}^{(M_p)}(\Omega) = \varprojlim_{K \Subset \Omega} \varprojlim_{h \rightarrow 0^+} \mathcal{E}^{\{M_p\},h}(K), \qquad \mathcal{E}^{\{M_p\}}(\Omega) = \varprojlim_{K \Subset \Omega} \varinjlim_{h \rightarrow \infty} \mathcal{E}^{\{M_p\},h}(K);  \]
their duals are the spaces of $M_p$-ultradistributions and compactly supported $M_p$-ultradistributions of Beurling and Roumieu type, respectively \cite{Komatsu}. 

It is important to point out that under certain circumstances these spaces coincide with those discussed in Subsection \ref{subsection weights}. For instance, Petzsche and Vogt have shown \cite[Sect. 5]{Petzsche-V} (see also \cite[Satz 2.3]{Petzsche78}) that if the weight sequence $(M_p)_{p}$ satisfies $(M.1)$, $(M.2)$, $(M.3)'$, and the condition
\begin{itemize}
\item [$(M.4)\:$] $\displaystyle \frac{M_{p}}{p!}\subset \left(\frac{M_p}{p!}\right)^{c}$,
\end{itemize}
then one can always find a weight function $\omega$ fulfilling $(\alpha)$, $(\beta)$, $(\gamma_{0})$, and $\omega\asymp M$, 
such that $\mathcal{D}_{(\omega)}(\Omega)=\mathcal{D}^{(M_p)}(\Omega)$ and $\mathcal{D}_{\{\omega\}}(\Omega)=\mathcal{D}^{\{M_p\}}(\Omega)$, topologically. Furthermore, they proved, under $(M.2)$, that $(M.4)$ is equivalent to the so-called Rudin condition:
\begin{itemize}
\item [$(M.4)''$] $\displaystyle \max_{q\leq p}\left(\frac{M_{q}}{q!}\right)^{\frac{1}{q}}\leq A\left(\frac{M_{p}}{p!}\right)^{\frac{1}{p}},$ $p\in\mathbb{N}$, for some $A>0$.
\end{itemize}
($(M.4)''$ is equivalent to the property that $\mathcal{E}^{(M_p)}(\Omega)$ and $\mathcal{E}^{\{M_p\}}(\Omega)$ are inverse closed, cf. \cite{Rudin62,Rainer-S}.) Finally, it is worth mentioning that strong non-quasianalyticity (i.e., Komatsu's condition (M.3) \cite{Komatsu}) automatically yields $(M.4)$, as shown by Petzsche \cite[Prop. 1.1]{Petzsche88}.

\section{Periodic ultradistributions}
\label{periodic ultradistributions}
 Let $\Lambda$ be a lattice in $\mathbb{R}^{d}$. An ultradistribution $f$ is said to be $\Lambda$-periodic if $f(\:\cdot\:+\mu)=f$ for all $\mu\in\Lambda$. We denote as $\mathcal{D}'_{\mathfrak{p}_{\Lambda},(\omega)}$ and $\mathcal{D}'_{\mathfrak{p}_{\Lambda},\{\omega\}}$ the spaces of $\Lambda$-periodic $\omega$-ultradistributions of Beurling and Roumieu type, respectively. We also consider $\mathcal{D}_{\mathfrak{p}_{\Lambda},(\omega)}=\mathcal{E}_{(\omega)}(\mathbb{R}^{d})\cap \mathcal{D}'_{\mathfrak{p}_{\Lambda},(\omega)}$ and $\mathcal{D}_{\mathfrak{p}_{\Lambda},\{\omega\}}=\mathcal{E}_{\{\omega\}}(\mathbb{R}^{d})\cap \mathcal{D}'_{\mathfrak{p}_{\Lambda},\{\omega\}}$, which are easily seen to be closed subspaces of $\mathcal{E}_{(\omega)}(\mathbb{R}^{d})$ and $\mathcal{E}_{\{\omega\}}(\mathbb{R}^{d})$, respectively. 

We shall show in this section that every $\Lambda$-periodic $\omega$-ultradistribution can be expanded into a Fourier series. For it, we select a \emph{fundamental region} $I_{\Lambda}$ for the lattice, namely, a connected set $I_{\Lambda}\subseteq \mathbb{R}^{d}$ with the property that the restriction of the quotient mapping $\mathbb{R}^{d}\to \mathbb{R}^{d}/\Lambda$ to $I_{\Lambda}$  is a bijection. The set  $I_{\Lambda}$ is of course a $d$-dimensional parallelepiped and $\mathbb{R}^{d}=\bigcup_{\mu\in\Lambda}(\mu+I_\Lambda)$. It can be readily shown that $|\Lambda|:=\operatorname*{vol}(I_{\Lambda})$ does not depend on the choice of the fundamental region $I_{\Lambda}$. As in the introduction, $\Lambda^{\ast}$ stands for the dual lattice of $\Lambda$. Clearly, $\Lambda=\Lambda^{\ast\ast}$. Many of our arguments in this section are based on the Poisson summation formula \cite{Hormander}, which in this context takes the form
\begin{equation}
\label{Poisson}
\sum_{\mu\in\Lambda}e^{2\pi i\xi \cdot \mu}=\frac{1}{|\Lambda|}\sum_{\mu^*\in\Lambda^*}\delta(\xi-\mu^*) \quad \mbox{in }\mathcal{S}'(\mathbb{R}^d).
\end{equation}
Given $f\in\mathcal{E}'_{(\omega)}(\mathbb{R}^{d})$ (or whenever it makes sense), we denote as $f_{\mathfrak{p}_{\Lambda}}$ its \emph{$\Lambda$-periodization}, that is, the $\Lambda$-periodic $\omega$-ultradistribution
\begin{equation}
\label{eqperiodization} 
f_{\mathfrak{p}_{\Lambda}}:=\sum_{\mu\in\Lambda}f(\:\cdot\:+\mu).
\end{equation}
We begin with a useful lemma. 

\begin{lemma}\label{lemma partition unity}
There is $\eta\in\mathcal{D}_{(\omega)}(\mathbb{R}^{d})$ such that $\eta_{\mathfrak{p}_{\Lambda}}=1$, namely,
\begin{equation}
\label{eqpu}
\sum_{\mu\in\Lambda}\eta(x+\mu)=1, \quad x\in\mathbb{R}^{d}.
\end{equation}
\end{lemma}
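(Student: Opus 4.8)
The plan is to construct $\eta$ by a standard ``smoothing of the indicator of a fundamental domain'' argument, adapted to the ultradifferentiable setting. First I would fix a fundamental region $I_\Lambda$ for $\Lambda$ and pick $\psi\in\mathcal{D}_{(\omega)}(\mathbb{R}^d)$ with $\psi\geq 0$ and $\psi>0$ on a neighborhood of the closure of $I_\Lambda$; such a $\psi$ exists because $\mathcal{D}_{(\omega)}(\mathbb{R}^d)$ is non-trivial (condition $(\beta)$ guarantees the existence of non-quasianalytic test functions in the Beurling class, a standard fact from \cite{Bjorck,b-m-t}) and one may take, e.g., a finite sum of translates of a single bump or a mollification of a cutoff. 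The key point is that, since $\psi$ has compact support, for every $x\in\mathbb{R}^d$ only finitely many translates $\psi(x+\mu)$, $\mu\in\Lambda$, are non-zero, so the periodization $\Psi:=\psi_{\mathfrak{p}_\Lambda}=\sum_{\mu\in\Lambda}\psi(\:\cdot\:+\mu)$ is a well-defined $\Lambda$-periodic function, locally a finite sum of elements of $\mathcal{D}_{(\omega)}(\mathbb{R}^d)$, hence $\Psi\in\mathcal{E}_{(\omega)}(\mathbb{R}^d)$ and in fact $\Psi\in\mathcal{D}_{\mathfrak{p}_\Lambda,(\omega)}$. Moreover $\Psi>0$ everywhere: given $x\in\mathbb{R}^d$, there is $\mu\in\Lambda$ with $x+\mu\in I_\Lambda$, and then $\psi(x+\mu)>0$.

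The next step is to set $\eta:=\psi/\Psi$ and to verify the two required properties. The periodization identity \eqref{eqpu} is immediate and purely formal: for each fixed $x$,
\[
\sum_{\mu\in\Lambda}\eta(x+\mu)=\sum_{\mu\in\Lambda}\frac{\psi(x+\mu)}{\Psi(x+\mu)}=\frac{1}{\Psi(x)}\sum_{\mu\in\Lambda}\psi(x+\mu)=\frac{\Psi(x)}{\Psi(x)}=1,
\]
where we used $\Lambda$-periodicity of $\Psi$ to pull $1/\Psi(x+\mu)=1/\Psi(x)$ out of the (finite) sum. It remains to check that $\eta\in\mathcal{D}_{(\omega)}(\mathbb{R}^d)$. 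The support condition is clear since $\operatorname{supp}\eta\subseteq\operatorname{supp}\psi$ is compact. The regularity statement $\eta\in\mathcal{D}_{(\omega)}(\mathbb{R}^d)$ is where the real work lies: one needs that $1/\Psi$ is a multiplier on $\mathcal{D}_{(\omega)}(\mathbb{R}^d)$, equivalently that $1/\Psi\in\mathcal{E}_{(\omega)}(\mathbb{R}^d)$, so that the product $\psi\cdot(1/\Psi)$ lands in $\mathcal{D}_{(\omega)}(\mathbb{R}^d)$. Since $\Psi\in\mathcal{E}_{(\omega)}(\mathbb{R}^d)$ is bounded below by a positive constant on the compact set $\operatorname{supp}\psi$ (indeed $\Psi$ is bounded below globally by periodicity and continuity), this reduces to the inverse-closedness of the Beurling class $\mathcal{E}_{(\omega)}$ under the reciprocal of nowhere-vanishing functions — a property that holds for these Beurling-Björck classes; alternatively one invokes that $\mathcal{E}_{(\omega)}(\mathbb{R}^d)$ is an algebra closed under composition with real-analytic functions (here $t\mapsto 1/t$ on a neighborhood of the compact range of $\Psi$ away from $0$), which is standard under $(\alpha)$ and $(\beta)$.

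The main obstacle, then, is precisely this last regularity verification: confirming that the reciprocal of a positive $\mathcal{E}_{(\omega)}$-function is again in $\mathcal{E}_{(\omega)}$. In the weight-sequence formulation this is the inverse-closedness encoded by condition $(M.4)''$ (the Rudin condition) recalled in Subsection \ref{subsection sequences}; in the weight-function formulation it is a known structural property of the Beurling-Björck classes under $(\alpha)$, $(\beta)$. I would cite the relevant result (e.g. from \cite{b-m-t} or \cite{Bjorck}) rather than reprove it, since it is a routine — if technically delicate — application of the Faà di Bruno formula combined with subadditivity of $\omega$ to control the derivatives of $1/\Psi$ by those of $\Psi$. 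An alternative route that sidesteps inverse-closedness entirely is to build $\Psi$ to be locally \emph{constant equal to $1$} is impossible (it would contradict compact support of $\psi$), but one can instead choose $\psi$ so that $\Psi$ is replaced by a constant via a more careful construction using Lemma-type partitions; however the quotient construction above is cleanest and I would go with it.
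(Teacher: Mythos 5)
Your proof is correct, but it takes a genuinely different route from the paper's. The paper works entirely on the Fourier side: by the Poisson summation formula, \eqref{eqpu} holds as soon as $\widehat{\eta}(0)=|\Lambda|$ and $\widehat{\eta}$ vanishes on $\Lambda^{*}\setminus\{0\}$, and such an $\eta$ is produced by taking any $\varphi\in\mathcal{D}_{(\omega)}(\mathbb{R}^{d})$ with $\int\varphi=|\Lambda|$ and setting $\widehat{\eta}(\xi)=\widehat{\varphi}(\xi)\prod_{j=1}^{d}\sin(2\pi\xi\cdot\mu_j)/(2\pi\xi\cdot\mu_j)$ for generators $\mu_1,\dots,\mu_d$ of $\Lambda$ (equivalently, convolving $\varphi$ with normalized uniform measures on the segments $[-\mu_j,\mu_j]$). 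Given that $\mathcal{D}_{(\omega)}$ is \emph{defined} by compact support plus decay of the Fourier transform, this is essentially free: the sinc factors are bounded by $1$, so $\|\eta\|_{\lambda}\leq\|\varphi\|_{\lambda}$, and convolution with compactly supported measures preserves compact support; the vanishing on $\Lambda^{*}\setminus\{0\}$ is immediate since $\mu^{*}\cdot\mu_j\in\mathbb{Z}$ with some $\mu^{*}\cdot\mu_j\neq 0$. Your division construction $\eta=\psi/\psi_{\mathfrak{p}_{\Lambda}}$ is the classical one and does yield the result, but, as you correctly identify, it hinges on the inverse-closedness of $\mathcal{E}_{(\omega)}(\mathbb{R}^{d})$, which is a nontrivial Wiener--L\'{e}vy-type theorem for Beurling algebras rather than something that follows from $(M.4)''$ (that condition belongs to the weight-sequence setting of Subsection \ref{subsection sequences}, not to the Beurling--Bj\"{o}rck classes used here). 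The paper does invoke exactly this inverse-closedness later, in the footnote to the proof of Lemma \ref{mainlemma}, so your citation is available within the paper's own framework and your argument closes; the trade-off is that the paper's construction is elementary and self-contained, whereas yours imports a structural theorem about the algebra $\mathcal{E}_{(\omega)}$ to prove what is, in this Fourier-side formulation, an almost tautological statement.
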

\begin{proof} Fourier transforming (\ref{eqpu}) and employing the Poisson summation formula (\ref{Poisson}), one obtains that the relation (\ref{eqpu}) would be satisfied if we find $\eta\in\mathcal{D}_{(\omega)}(\mathbb{R}^{d})$ such that $\widehat{\eta}(0)=|\Lambda|$ and $\widehat{\eta}(\mu^*)=0$ for every $\mu^*\in\Lambda^*\setminus\{0\}$. Select $d$ vectors $\mu_{1},\dots,\mu_{d}$ that generate the Abelian group $\Lambda$ and that span $\mathbb{R}^{d}$, and pick $\varphi\in\mathcal{D}_{(\omega)}(\mathbb{R}^{d})$ such that $\int_{\mathbb{R}^{d}}\varphi(x)\mathrm{d}x=|\Lambda|$. The function $\eta$ given in Fourier side as 
$$\widehat{\eta}(\xi)=\widehat{\varphi}(\xi)\prod_{j=1}^{d}\frac{\sin(2\pi\xi \cdot  \mu_j)}{2\pi \xi \cdot  \mu_j}$$
 satisfies all requirements.
\end{proof}
 
We employ the notation $e_y(x) = e^{2\pi i y \cdot x}$, where $y$ is a fixed vector of $\mathbb{R}^d$. Given a locally integrable $\Lambda$-periodic function $\phi$, its Fourier coefficients with respect to the lattice $\Lambda$ are given by 
\begin{equation}
\label{eqfcoeff} c_{\mu^*}=c_{\mu^*}(\phi)=\frac{1}{|\Lambda|}\int_{I_{\Lambda}}\phi(x)e_{-\mu^*}(x)\mathrm{d}x, \quad \mu^*\in\Lambda^*.
\end{equation}
Obviously, the integral in (\ref{eqfcoeff}) does not depend on the choice of $I_{\Lambda}$; in fact, these coefficients can be also computed as 
\begin{equation}
\label{eqfcoeff2} c_{\mu^*}=\frac{1}{|\Lambda|}\int_{\mathbb{R}^{d}}\phi(x)\eta(x)e_{-\mu^*}(x)\mathrm{d}x=\frac{1}{|\Lambda|}\widehat{(\phi\eta)}(\mu^*),
 \end{equation}
where $\eta$ is as in Lemma \ref{lemma partition unity}. Note that we thus have $c_{\mu^*}(e_{\nu^*})=\delta_{\nu^{*}\mu^{*}}$, as  immediately follows from (\ref{eqfcoeff2}). The next lemma shows that $\mathcal{D}_{\mathfrak{p}_{\Lambda},(\omega)}$ and $\mathcal{D}_{\mathfrak{p}_{\Lambda},\{\omega\}}$ are isomorphic to t.v.s of $\omega$-rapidly decreasing functions on the dual lattice $\Lambda^{\ast}$, namely, 
$$\mathcal{S}_{(\omega)}(\Lambda^{\ast})=\varprojlim_{\lambda\to\infty} \mathcal{S}^{\lambda}_{\omega}(\Lambda^{\ast})\quad\mbox{and}\quad \mathcal{S}_{\{\omega\}}(\Lambda^{\ast})=\varinjlim_{\lambda\to0^{+}} \mathcal{S}^{\lambda}_{\omega}(\Lambda^{\ast}),$$
where 
$$\displaystyle\mathcal{S}^{\lambda}_{\omega}(\Lambda^{\ast})=\left\{(c_{\mu^*})_{\mu^*\in\Lambda^*}\in \mathbb{C}^{\Lambda^{\ast}}:\: \sigma_{\lambda}((c_{\mu^*})):= \sup_{\mu^*\in\Lambda^*}|c_{\mu^*}|e^{\lambda \omega(\mu^*)}<\infty \right\}, \quad \lambda\in\mathbb{R}.$$
(In the Roumieu case we assume that $(\gamma_{0})$ holds.)
\begin{lemma}\label{ultrafunc}
If $\phi \in\mathcal{D}_{\mathfrak{p}_{\Lambda},(\omega)}$ $($$\phi \in\mathcal{D}_{\mathfrak{p}_{\Lambda},\{\omega\}}$$)$, then
\begin{equation}
\phi= \sum_{\mu^* \in \Lambda^\ast} c_{\mu^*}e_{\mu^*},
\label{series}
\end{equation}
with convergence in $\mathcal{D}_{\mathfrak{p}_{\Lambda},(\omega)}$ $($$\mathcal{D}_{\mathfrak{p}_{\Lambda},\{\omega\}}$$)$,
where the Fourier coefficients $c_{\mu^*}$ are given by $(\ref{eqfcoeff})$. Moreover, the mapping $\phi\mapsto (c_{\mu^*})_{\mu^*\in\Lambda^*}$ yields the $($t.v.s.$)$ isomorphisms $\mathcal{D}_{\mathfrak{p}_{\Lambda},(\omega)} \cong \mathcal{S}_{(\omega)}(\Lambda^{\ast})$ and $\mathcal{D}_{\mathfrak{p}_{\Lambda},\{\omega\}} \cong \mathcal{S}_{\{\omega\}}(\Lambda^{\ast})$.
\end{lemma}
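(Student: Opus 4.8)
The plan is to prove that the \emph{analysis map} $\mathbf{a}\colon\phi\mapsto(c_{\mu^*}(\phi))_{\mu^*\in\Lambda^*}$ and the \emph{synthesis map} $\mathbf{s}\colon(c_{\mu^*})_{\mu^*}\mapsto\sum_{\mu^*}c_{\mu^*}e_{\mu^*}$ are continuous and mutually inverse between $\mathcal{D}_{\mathfrak{p}_{\Lambda},(\omega)}$ and $\mathcal{S}_{(\omega)}(\Lambda^{\ast})$, and likewise in the Roumieu case. Two elementary observations drive everything: first, the representation $c_{\mu^*}(\phi)=|\Lambda|^{-1}\widehat{(\phi\eta)}(\mu^*)$ of \eqref{eqfcoeff2}, with $\eta$ as in Lemma \ref{lemma partition unity}; second, since $\widehat{e_{\mu^*}\psi}(\xi)=\widehat{\psi}(\xi-\mu^*)$, the subadditivity $(\alpha)$ of $\omega$ yields $\|e_{\mu^*}\psi\|_{\lambda}\le e^{\lambda\omega(\mu^*)}\|\psi\|_{\lambda}$ for every $\lambda$ and every admissible $\psi$; in particular every $e_{\mu^*}$, hence every trigonometric polynomial, belongs to $\mathcal{E}_{(\omega)}(\mathbb{R}^d)$.

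\textbf{Analysis map.} For $\phi\in\mathcal{D}_{\mathfrak{p}_{\Lambda},(\omega)}$ we have $\phi\eta\in\mathcal{D}_{(\omega)}(\mathbb{R}^d)$ because $\eta\in\mathcal{D}_{(\omega)}(\mathbb{R}^d)$ and $\phi\in\mathcal{E}_{(\omega)}(\mathbb{R}^d)$, so \eqref{eqfcoeff2} gives $\sigma_{\lambda}(\mathbf{a}\phi)\le|\Lambda|^{-1}\|\phi\eta\|_{\lambda}<\infty$ for every $\lambda>0$; thus $\mathbf{a}\phi\in\mathcal{S}_{(\omega)}(\Lambda^{\ast})$, and since $\phi\mapsto\|\phi\eta\|_{\lambda}$ is a continuous seminorm on $\mathcal{E}_{(\omega)}(\mathbb{R}^d)$, the same estimate makes $\mathbf{a}$ continuous. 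The Roumieu version is identical, with ``for every $\lambda$'' replaced by ``for some $\lambda$''.

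\textbf{Synthesis map.} Let $(c_{\mu^*})\in\mathcal{S}_{(\omega)}(\Lambda^{\ast})$ and, for finite $F\subset\Lambda^{\ast}$, put $S_F=\sum_{\mu^*\in F}c_{\mu^*}e_{\mu^*}$. For $\psi\in\mathcal{D}_{(\omega)}(\mathbb{R}^d)$ and $\lambda>0$ the two observations above give $\|S_F\psi\|_{\lambda}\le\|\psi\|_{\lambda}\sum_{\mu^*\in F}|c_{\mu^*}|e^{\lambda\omega(\mu^*)}$. Picking $\lambda'>\lambda$ with $(\lambda'-\lambda)C>d$, where $C$ comes from $(\gamma)$, one has $\sum_{\mu^*\in\Lambda^{\ast}}|c_{\mu^*}|e^{\lambda\omega(\mu^*)}\le\sigma_{\lambda'}((c_{\mu^*}))\sum_{\mu^*\in\Lambda^{\ast}}e^{-(\lambda'-\lambda)\omega(\mu^*)}<\infty$, the last sum being dominated by the convergent lattice sum $\sum(1+|\mu^*|)^{-(\lambda'-\lambda)C}$. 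Hence $(S_F)$ is Cauchy in the complete space $\mathcal{E}_{(\omega)}(\mathbb{R}^d)$ as $F\uparrow\Lambda^{\ast}$; its limit $\mathbf{s}(c_{\mu^*})$ is $\Lambda$-periodic, so it lies in the closed subspace $\mathcal{D}_{\mathfrak{p}_{\Lambda},(\omega)}$, and \eqref{series} converges there. Letting $F\uparrow\Lambda^{\ast}$ in the displayed bound gives $\|\mathbf{s}(c_{\mu^*})\psi\|_{\lambda}\le (\text{const})\cdot\|\psi\|_{\lambda}\,\sigma_{\lambda'}((c_{\mu^*}))$, i.e. continuity of $\mathbf{s}$. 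In the Roumieu case the argument is the same, except that now only a single $\lambda'>0$ with $\sigma_{\lambda'}((c_{\mu^*}))<\infty$ is available, and one needs the \emph{stronger} hypothesis $(\gamma_{0})$ (assumed in the statement) to still conclude $\sum_{\mu^*}e^{-(\lambda'-\lambda)\omega(\mu^*)}<\infty$ for small $\lambda>0$, since $(\gamma_{0})$ forces $\omega(\mu^*)\ge(d+1)(\lambda'-\lambda)^{-1}\log(1+|\mu^*|)$ for large $|\mu^*|$.

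\textbf{Mutual inverse and conclusion.} If $(c_{\mu^*})\in\mathcal{S}_{(\omega)}(\Lambda^{\ast})$ and $\phi=\mathbf{s}(c_{\mu^*})$, applying the continuous functionals $\phi\mapsto c_{\nu^*}(\phi)$ from the analysis step termwise to the convergent series and using $c_{\nu^*}(e_{\mu^*})=\delta_{\nu^{*}\mu^{*}}$ (noted after \eqref{eqfcoeff2}) gives $\mathbf{a}\phi=(c_{\mu^*})$, so $\mathbf{a}\circ\mathbf{s}=\mathrm{id}$. Conversely, for $\phi\in\mathcal{D}_{\mathfrak{p}_{\Lambda},(\omega)}$ the series $\mathbf{s}(\mathbf{a}\phi)=\sum_{\mu^*}c_{\mu^*}(\phi)e_{\mu^*}$ converges in $\mathcal{D}_{\mathfrak{p}_{\Lambda},(\omega)}$ by the synthesis step, hence in $C^{\infty}(\mathbb{R}^d)$; since $\phi$ is a smooth $\Lambda$-periodic function, its classical Fourier series---whose coefficients are exactly the $c_{\mu^*}(\phi)$ of \eqref{eqfcoeff}---also converges to $\phi$ in $C^{\infty}(\mathbb{R}^d)$, so the two limits coincide and $\mathbf{s}\circ\mathbf{a}=\mathrm{id}$. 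Therefore $\mathbf{a}$ is a topological isomorphism onto $\mathcal{S}_{(\omega)}(\Lambda^{\ast})$ with inverse $\mathbf{s}$, and \eqref{series} holds with convergence in $\mathcal{D}_{\mathfrak{p}_{\Lambda},(\omega)}$; the Roumieu statement follows verbatim. The one genuinely delicate point is the \emph{uniformity in $\psi$} of the summability in the synthesis step: this is exactly where subadditivity $(\alpha)$ is used to split $\omega(\xi)\le\omega(\xi-\mu^*)+\omega(\mu^*)$, and where $(\gamma)$---respectively $(\gamma_{0})$ in the Roumieu case---is indispensable for the convergence of $\sum_{\mu^*\in\Lambda^{\ast}}e^{-\varepsilon\omega(\mu^*)}$; everything else is soft functional analysis or classical Fourier series.
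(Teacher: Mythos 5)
Your proof is correct and follows essentially the same route as the paper: the same estimate $|c_{\mu^*}(\phi)|\le|\Lambda|^{-1}\|\eta\phi\|_{\lambda}e^{-\lambda\omega(\mu^*)}$ for the analysis map, and the same subadditivity-based bound $\|S_F\psi\|_{\lambda}\le\|\psi\|_{\lambda}\sum_{\mu^*\in F}|c_{\mu^*}|e^{\lambda\omega(\mu^*)}$ (with $(\gamma)$, resp.\ $(\gamma_0)$, guaranteeing summability) for the synthesis map. The only, harmless, divergence is in closing the argument $\mathbf{s}\circ\mathbf{a}=\mathrm{id}$: you identify the limit of the series with the classical $C^{\infty}$ Fourier expansion of the smooth $\Lambda$-periodic function $\phi$, whereas the paper deduces the same fact from the injectivity of the analysis map, obtained either by a linear change of variables reducing to $\Lambda=\mathbb{Z}^{d}$ or by a direct Poisson-summation computation.
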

\begin{proof} Let $\eta\in\mathcal{D}_{(\omega)}(\mathbb{R}^{d})$ be as in Lemma \ref{lemma partition unity}. The relation (\ref{eqfcoeff2}) then yields
$$
 |c_{\mu^*}(\phi)| =|\Lambda|^{-1} \left|\widehat{\eta \phi}(\mu^*)\right| \leq |\Lambda|^{-1}\|\eta \phi\|_\lambda e^{-\lambda\omega(\mu^*)},
 $$
which shows the continuity of $\phi\mapsto (c_{\mu^*}(\phi))_{\mu^*\in\Lambda^*}$ in both cases. On the other hand, if $(c_{\mu^*})_{\mu^*\in\Lambda^*}\in \mathcal{S}_{(\omega)}(\Lambda^{\ast})$ (or $(c_{\mu^*})_{\mu^*\in\Lambda^*}\in \mathcal{S}_{\{\omega\}}(\Lambda^{\ast})$), $F$ is a finite subset of $\Lambda^*$, and $\psi\in \mathcal{D}_{(\omega)}(\mathbb{R}^{d})$ ($\psi\in \mathcal{D}_{\{\omega\}}(\mathbb{R}^{d})$), then 
$$
\| \psi \sum_{\mu^*\in\Lambda^*\setminus F  }c_{\mu^*}e_{\mu^*}\|_{\lambda}\leq \|\psi\|_{\lambda} \sigma_{2\lambda}((c_{\mu^*}))\sum_{\mu^*\in\Lambda^*\setminus F  } e^{-\lambda \omega(\mu^*)}\:.
$$
This proves that $\sum_{\mu^*\in\Lambda^* }c_{\mu^*}e_{\mu^*}$ is summable in $\mathcal{D}_{\mathfrak{p}_{\Lambda},(\omega)}$ (in $\mathcal{D}_{\mathfrak{p}_{\Lambda},\{\omega\}}$) and that the mapping $(c_{\mu^*})_{\mu^*\in\Lambda^*}\mapsto \sum_{\mu^*\in\Lambda^*}c_{\mu^*}e_{\mu*}$ is also continuous. The convergence of (\ref{series}) is now a consequence of the injectivity of $\phi\mapsto (c_{\mu^*}(\phi))_{\mu^*\in\Lambda^*}$, which of course follows from the case $\Lambda=\mathbb{Z}^{d}$ by a linear change of variables\footnote{Every lattice in $\mathbb{R}^{d}$ is of the form $\Lambda=T(\mathbb{Z}^{d})$, where $T$ is an invertible matrix.}. Alternatively, this injectivity can also be established as follows. If  $\phi$ is such that $c_{\mu^*}(\phi)=|\Lambda|^{-1}\widehat{(\phi \eta)}(\mu^{*})=0$ for every $\mu^*\in\Lambda^*$, then, for an arbitrary $\psi\in\mathcal{D}(\mathbb{R}^{d})$, we have $\int_{\mathbb{R}^{d}}\phi(x)\psi(x)\mathrm{d}x =\int_{\mathbb{R}^{d}}\phi(x)\eta(x)\psi_{\mathfrak{p}_{\Lambda}}(x)
 \mathrm{d}x$. Hence, by the Poisson summation formula, 
\begin{align*}
\int_{\mathbb{R}^{d}}\phi(x)\psi(x)\mathrm{d}x&=\frac{1}{|\Lambda|}\left\langle \widehat{(\phi \eta)}(-\xi),\sum_{\mu^*\in\Lambda^*}\widehat{\psi}(\xi)\delta(\xi-\mu^*)\right\rangle
\\
&
=\frac{1}{|\Lambda|}\sum_{\mu^*\in\Lambda^*}\widehat{\psi}(-\mu^*)\widehat{(\phi \eta)}(\mu^{*})=0.
\end{align*}
Therefore, we must have $\phi=0$.
\end{proof}

We are ready to deal with Fourier expansions of $\omega$-ultradistributions. The treatment is similar to the distribution case \cite{zeeman}, but we give the details for the sake of completeness. Observe first that a standard argument shows that $$\mathcal{S}'_{(\omega)}(\Lambda^{\ast})= \varinjlim_{\lambda\to\infty} \mathcal{S}^{-\lambda}_{\omega}(\Lambda^{\ast})\quad\mbox{and}\quad \mathcal{S}'_{\{\omega\}}(\Lambda^{\ast})=\varprojlim_{\lambda\to0^{+}}\mathcal{S}^{-\lambda}_{\omega}(\Lambda^{\ast}).$$
Lemma \ref{ultrafunc} then yields that the duals of $\mathcal{D}_{\mathfrak{p}_{\Lambda},(\omega)}$ and $\mathcal{D}_{\mathfrak{p}_{\Lambda},\{\omega\}}$ are isomorphic to these spaces and their elements can be expanded as $g=\sum_{\mu^* \in \Lambda^{\ast}}c_{\mu^*}(g)e_{\mu^\ast}$ with $( c_{\mu^*}(g))_{\mu^*\in\Lambda^*}$ being an element of $\mathcal{S}'_{(\omega)}(\Lambda^{\ast})$ or $\mathcal{S}'_{\{\omega\}}(\Lambda^{\ast})$, respectively, and 
$$
\langle g,\phi \rangle_{(\mathcal{D}_{\mathfrak{p}_{\Lambda},\omega})'\times \mathcal{D}_{\mathfrak{p}_{\Lambda},\omega}}=|\Lambda| \sum_{\mu^*\in\Lambda^*} c_{-\mu^*}(g)c_{\mu^*}(\phi),
$$
where $\omega$ stands for either the Beurling case $(\omega)$ or the Roumieu case $\{\omega\}$, respectively.

In the rest of the discussion $\eta$ stands for a test function as in Lemma \ref{lemma partition unity}. Given 
$f\in \mathcal{D}'_{\mathfrak{p}_{\Lambda},(\omega)}$ ($f\in\mathcal{D}'_{\mathfrak{p}_{\Lambda},\{\omega\}}$), we identify it with an element $g$ of the dual of  $\mathcal{D}_{\mathfrak{p}_{\Lambda},(\omega)}$ ($\mathcal{D}_{\mathfrak{p}_{\Lambda},\{\omega\}}$) as follows:
\begin{equation}
\label{eqidentificationperiodic}
\langle g,\phi \rangle_{(\mathcal{D}_{\mathfrak{p}_{\Lambda},\omega})'\times \mathcal{D}_{\mathfrak{p}_{\Lambda},\omega}}:=\langle f, \eta \phi \rangle_{\mathcal{D}'_{\omega}(\mathbb{R}^{d})\times \mathcal{D}_{\omega}(\mathbb{R}^{d})}, \quad \phi\in \mathcal{D}_{\mathfrak{p}_{\Lambda},\omega}\:.
\end{equation}
The definition of $g$ is independent of the choice of $\eta$, as can be readily verified. The $\Lambda$-periodic $\omega$-ultradistribution $f$ can be recovered on $\mathbb{R}^{d}$ from $g$ as 
\begin{equation}
\label{eqidentificationperiodic2}
\langle f,\varphi\rangle_{\mathcal{D}'_{\omega}(\mathbb{R}^{d})\times \mathcal{D}_{\omega}(\mathbb{R}^{d})}=\langle g,\varphi_{\mathfrak{p}_{\Lambda}} \rangle_{(\mathcal{D}_{\mathfrak{p}_{\Lambda},\omega})'\times \mathcal{D}_{\mathfrak{p}_{\Lambda,\omega}}}\:, \quad \varphi\in  \mathcal{D}_{\omega}(\mathbb{R}^{d}).
\end{equation}
Using $g$, one defines the Fourier coefficients of $f$ as
\begin{equation}
\label{eqfcoeff3}
c_{\mu^*}(f):=c_{\mu^*}(g)=\frac{1}{|\Lambda|}\langle f, \eta e_{-\mu^*} \rangle, \quad \mu^*\in\Lambda^*.
\end{equation} 
Summarizing, we have:
\begin{proposition}\label{perdis} Every $f\in \mathcal{D}'_{\mathfrak{p}_{\Lambda},(\omega)}$ $($$f\in\mathcal{D}'_{\mathfrak{p}_{\Lambda},\{\omega\}}$$)$ can be expanded as
\begin{equation}
\label{series2}
f=\sum_{\mu^* \in \Lambda^*} c_{\mu^*}e_{\mu^*} \quad \mbox{in } \mathcal{D}'_{(\omega)}(\mathbb{R}^{d}) \quad (\mbox{in } \mathcal{D}'_{\{\omega\}}(\mathbb{R}^{d})), 
\end{equation}
where the Fourier coefficients $c_{\mu^*}$ are given by $(\ref{eqfcoeff3})$. The mapping $f\mapsto (c_{\mu^*})_{\mu^*\in\Lambda^*}$ provides the $($t.v.s.$)$ isomorphisms $\mathcal{D}'_{\mathfrak{p}_{\Lambda},(\omega)} \cong \mathcal{S}'_{(\omega)}(\Lambda^{\ast})$ and $\mathcal{D}'_{\mathfrak{p}_{\Lambda},\{\omega\}} \cong \mathcal{S}'_{\{\omega\}}(\Lambda^{\ast})$.
\end{proposition}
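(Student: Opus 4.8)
The proof essentially amounts to assembling the identifications set up above, and I would organize it in two steps.

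\emph{Step 1 (the abstract expansion).} Given $f\in\mathcal{D}'_{\mathfrak{p}_{\Lambda},(\omega)}$ (resp.\ $f\in\mathcal{D}'_{\mathfrak{p}_{\Lambda},\{\omega\}}$), let $g$ be the functional on $\mathcal{D}_{\mathfrak{p}_{\Lambda},(\omega)}$ (resp.\ $\mathcal{D}_{\mathfrak{p}_{\Lambda},\{\omega\}}$) attached to it by $(\ref{eqidentificationperiodic})$. As recalled before the statement, $f\mapsto g$ is a linear bijection with inverse given by $(\ref{eqidentificationperiodic2})$ (here one uses $(\ref{eqpu})$ together with the $\Lambda$-periodicity of $\phi$, resp.\ of $f$), and it is bicontinuous since $\phi\mapsto\eta\phi$ and $\varphi\mapsto\varphi_{\mathfrak{p}_{\Lambda}}$ are continuous between the relevant spaces. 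By the duality discussion preceding the statement — which rests on Lemma \ref{ultrafunc} and the descriptions $\mathcal{S}'_{(\omega)}(\Lambda^{\ast})=\varinjlim_{\lambda}\mathcal{S}^{-\lambda}_{\omega}(\Lambda^{\ast})$, $\mathcal{S}'_{\{\omega\}}(\Lambda^{\ast})=\varprojlim_{\lambda}\mathcal{S}^{-\lambda}_{\omega}(\Lambda^{\ast})$ — the functional $g$ expands as $g=\sum_{\mu^{\ast}\in\Lambda^{\ast}}c_{\mu^{\ast}}(g)e_{\mu^{\ast}}$ with $(c_{\mu^{\ast}}(g))_{\mu^{\ast}}$ in $\mathcal{S}'_{(\omega)}(\Lambda^{\ast})$ (resp.\ $\mathcal{S}'_{\{\omega\}}(\Lambda^{\ast})$), and $g\mapsto(c_{\mu^{\ast}}(g))_{\mu^{\ast}}$ is a topological isomorphism onto that space; composing with $f\mapsto g$ yields the isomorphism claimed in the statement. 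The coefficients $c_{\mu^{\ast}}(f)=c_{\mu^{\ast}}(g)$ are then precisely those in $(\ref{eqfcoeff3})$, and their independence of $\eta$ has already been noted.

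\emph{Step 2 (the expansion on $\mathbb{R}^{d}$).} It remains to see that $(\ref{series2})$ is summable to $f$ in $\mathcal{D}'_{(\omega)}(\mathbb{R}^{d})$ (resp.\ $\mathcal{D}'_{\{\omega\}}(\mathbb{R}^{d})$), and not merely in the periodic dual. For $\varphi\in\mathcal{D}_{(\omega)}(\mathbb{R}^{d})$ (resp.\ $\mathcal{D}_{\{\omega\}}(\mathbb{R}^{d})$), formula $(\ref{eqidentificationperiodic2})$ gives $\langle f,\varphi\rangle=\langle g,\varphi_{\mathfrak{p}_{\Lambda}}\rangle$; since $e_{-\mu^{\ast}}$ is $\Lambda$-periodic (because $\mu^{\ast}\cdot\mu\in\mathbb{Z}$ for $\mu\in\Lambda$), unfolding a fundamental region yields $c_{\mu^{\ast}}(\varphi_{\mathfrak{p}_{\Lambda}})=|\Lambda|^{-1}\widehat{\varphi}(\mu^{\ast})$, whence the pairing formula for the periodic dual gives $\langle f,\varphi\rangle=\sum_{\mu^{\ast}\in\Lambda^{\ast}}c_{\mu^{\ast}}(g)\,\widehat{\varphi}(-\mu^{\ast})=\sum_{\mu^{\ast}\in\Lambda^{\ast}}c_{\mu^{\ast}}(g)\,\langle e_{\mu^{\ast}},\varphi\rangle$, which is the pairing of $(\ref{series2})$ against $\varphi$. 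To upgrade this to summability in the strong topology of $\mathcal{D}'_{\omega}(\mathbb{R}^{d})$, I would estimate tails uniformly on bounded sets. In the Beurling case, a bounded $B\subseteq\mathcal{D}_{(\omega)}(\mathbb{R}^{d})$ is bounded in $\varprojlim_{\lambda}\mathcal{D}^{\lambda}_{\omega}(K)$ for some $K\Subset\mathbb{R}^{d}$, so $\sup_{\varphi\in B}|\widehat{\varphi}(\mu^{\ast})|\leq C_{\lambda}e^{-\lambda\omega(\mu^{\ast})}$ for every $\lambda>0$; since $(c_{\mu^{\ast}}(g))_{\mu^{\ast}}\in\mathcal{S}'_{(\omega)}(\Lambda^{\ast})$ there is $\lambda_{0}>0$ with $|c_{\mu^{\ast}}(g)|\leq Ce^{\lambda_{0}\omega(\mu^{\ast})}$, and $(\gamma)$ makes $\sum_{\mu^{\ast}\in\Lambda^{\ast}}e^{-\lambda'\omega(\mu^{\ast})}<\infty$ for $\lambda'$ large enough, so choosing $\lambda=\lambda_{0}+\lambda'$ gives $\sum_{\mu^{\ast}\notin F}\sup_{\varphi\in B}|c_{\mu^{\ast}}(g)\widehat{\varphi}(-\mu^{\ast})|\to0$ as the finite set $F$ exhausts $\Lambda^{\ast}$. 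In the Roumieu case, $B$ is bounded in a single $\mathcal{D}^{\mu}_{\omega}(K)$, while $|c_{\mu^{\ast}}(g)|\leq C_{\lambda}e^{\lambda\omega(\mu^{\ast})}$ holds for all $\lambda>0$, and $(\gamma_{0})$ guarantees $\sum_{\mu^{\ast}\in\Lambda^{\ast}}e^{-\varepsilon\omega(\mu^{\ast})}<\infty$ for every $\varepsilon>0$; picking $\lambda<\mu$ finishes the estimate in the same fashion. Finally, for an ordinary locally integrable $\Lambda$-periodic function one checks through $(\ref{eqfcoeff2})$ that $(\ref{eqfcoeff3})$ reduces to the classical formula $(\ref{eqfcoeff})$, so the two notions of Fourier coefficient agree.

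The only genuinely non-formal point is Step 2: upgrading summability of $\sum_{\mu^{\ast}}c_{\mu^{\ast}}(g)e_{\mu^{\ast}}$ from the dual of $\mathcal{D}_{\mathfrak{p}_{\Lambda},\omega}$ to the natural topology of $\mathcal{D}'_{\omega}(\mathbb{R}^{d})$, which hinges on the uniform tail estimates above and hence on the interplay between the inductive/projective limit quantifiers and conditions $(\gamma)$, resp.\ $(\gamma_{0})$. The remaining ingredients — the isomorphism of the statement, the intrinsic (i.e., $\eta$-independent) character of the coefficients, and their compatibility with the classical Fourier coefficients — follow directly from Lemmas \ref{lemma partition unity} and \ref{ultrafunc} and the identifications $(\ref{eqidentificationperiodic})$–$(\ref{eqfcoeff3})$ already in place.
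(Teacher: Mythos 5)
Your proof is correct and follows essentially the same route as the paper: identify $f$ with the functional $g$ via (\ref{eqidentificationperiodic}), expand $g$ using Lemma \ref{ultrafunc} and the dual pairing formula, recover (\ref{series2}) from the identity $\int_{\mathbb{R}^{d}}\varphi(x) e_{\mu^*}(x)\,\mathrm{d}x=\int_{\mathbb{R}^{d}}\eta(x)\varphi_{\mathfrak{p}_{\Lambda}}(x)e_{\mu^*}(x)\,\mathrm{d}x$, and get continuity of the inverse by transposing $\varphi\mapsto\varphi_{\mathfrak{p}_{\Lambda}}$. The only difference is that you spell out the uniform tail estimates giving summability in the strong topology of $\mathcal{D}'_{\omega}(\mathbb{R}^{d})$, a point the paper leaves implicit in the weak-$*$ computation.
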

\begin{proof} Writing  $g=\sum_{\mu^* \in \Lambda^*} c_{\mu^*}(f)e_{\mu^*}$ for the ultradistribution given by (\ref{eqidentificationperiodic}), we obtain 
$$\langle f,\varphi\rangle=\langle g,\varphi_{\mathfrak{p}_{\Lambda}} \rangle=|\Lambda|\sum_{\mu^* \in \Lambda^*} c_{\mu^*}(f) c_{-\mu^*}(\varphi_{\mathfrak{p}_{\Lambda^*}} )=\sum_{\mu^* \in \Lambda^*} c_{\mu^*}(f)\int_{\mathbb{R}^{d}}\eta(x)\varphi_{\mathfrak{p}_{\Lambda}}(x)e_{\mu^*}(x) \mathrm{d}x, 
$$ which proves (\ref{series2}) because $\int_{\mathbb{R}^{d}}\varphi(x)e_{\mu^*}(x)\mathrm{d}x =\int_{\mathbb{R}^{d}}\eta(x)\varphi_{\mathfrak{p}_{\Lambda}}(x)e_{\mu^*}(x)\mathrm{d}x $. Note that the correspondence $f\mapsto g$ provides continuous mappings $\mathcal{D}'_{\mathfrak{p}_{\Lambda},(\omega)}\to (\mathcal{D}_{\mathfrak{p}_{\Lambda},(\omega)})'$ and $\mathcal{D}'_{\mathfrak{p}_{\Lambda},\{\omega\}}\to (\mathcal{D}_{\mathfrak{p}_{\Lambda},\{\omega\}})'$,  as directly follows from (\ref{eqidentificationperiodic}). Their inverse mappings (cf. (\ref{eqidentificationperiodic2})) are the transposes of the continuous mappings $\mathcal{D}_{(\omega)}(\mathbb{R}^{d})\to \mathcal{D}_{\mathfrak{p}_{\Lambda},(\omega)}$ and $\mathcal{D}_{\{\omega\}}(\mathbb{R}^{d})\to \mathcal{D}_{\mathfrak{p}_{\Lambda},\{\omega\}}$ given by $\varphi\mapsto\varphi_{\mathfrak{p}_{\Lambda}}$, so they are continuous as well.
\end{proof}

In view of Proposition \ref{perdis}, we can canonically identify  $\mathcal{D}'_{\mathfrak{p}_{\Lambda},(\omega)} $ and $\mathcal{D}'_{\mathfrak{p}_{\Lambda},\{\omega\}}$ with the duals of $\mathcal{D}_{\mathfrak{p}_{\Lambda},(\omega)}$ and $\mathcal{D}_{\mathfrak{p}_{\Lambda},\{\omega\}}$ via the Fourier series (\ref{series2}), so that we simply write $\mathcal{D}'_{\mathfrak{p}_{\Lambda},(\omega)}=(\mathcal{D}_{\mathfrak{p}_{\Lambda},(\omega)})' $ and $\mathcal{D}'_{\mathfrak{p}_{\Lambda},\{\omega\}}=(\mathcal{D}_{\mathfrak{p}_{\Lambda},\{\omega\}})'$. This convention of course amounts to the same as the identification $f=g$ by means of (\ref{eqidentificationperiodic}) and 
(\ref{eqidentificationperiodic2}).

We can also define weighted Fourier-Lebesgue spaces with respect to the dual lattice $\Lambda^{*}$. Let $v \in \mathscr{M}_{(\omega)}$ and $q \in[1, \infty]$. The Banach space $\mathcal{F} l^q_{v,\Lambda^{*}}$ consists of all $f =\sum_{\mu^*\in\Lambda^*} c_{\mu^*}e_{\mu^*}\in \mathcal{D}'_{\mathfrak{p}_{\Lambda},(\omega)}$ such that $(c_{\mu^*})_{\mu^* \in \Lambda^*} \in l^q_v(\Lambda^*)$, namely,  
$$
|f|_{\mathcal{F} l^{q}_{v,\Lambda^*}} := \|(c_{\mu^*})_{\mu^*\in\Lambda^*}\|_{l^q_v(\Lambda^*)}=\|(c_{\mu^*}v(\mu^*))_{\mu^*\in\Lambda^*}\|_{l^q(\Lambda^*)} < \infty.
$$

\begin{remark}
\label{rFSu} Analogous results hold for $M_p$-ultradistributions under the assumptions $(M.1)$, $(M.2)'$, and $(M.3)'$. In fact, assume that the sequence $(M_{p})_{p\in\mathbb{N}}$ satisfies these three conditions  and consider ${\mathcal{D}^{(M_p)}_{\mathfrak{p}_{\Lambda}}}'$ and ${\mathcal{D}^{\{M_p\}}_{\mathfrak{p}_{\Lambda}}}'$, the subspaces of ${\mathcal{D}^{(M_p)}}'(\mathbb{R}^d)$ and ${\mathcal{D}^{\{M_p\}}}'(\mathbb{R}^d)$, respectively, consisting of $\Lambda$-periodic $M_p$-ultradistributions. Then, every $\Lambda$-periodic $M_p$-ultradistribution admits the Fourier expansion (\ref{series2}) with convergence in ${\mathcal{D}^{(M_p)}}'(\mathbb{R}^d)$ or ${\mathcal{D}^{\{M_p\}}}'(\mathbb{R}^d)$, respectively. Furthermore,  $f\mapsto (c_{\mu^*})_{\mu^*\in\Lambda^*}$ also yields t.v.s. isomorphisms $\displaystyle{\mathcal{D}^{(M_p)}_{\mathfrak{p}_{\Lambda}}}' \cong {\mathcal{S}^{(M_p)}}'(\Lambda^{\ast}):= \varinjlim_{h\to\infty} \mathcal{S}^{M_p,-h}(\Lambda^{\ast})$ and $\displaystyle{\mathcal{D}^{\{
 M_p\}}_{
 \mathfrak{p}_{\Lambda}}}' \cong {\mathcal{S}^{\{M_p\}}}'(\Lambda^{\ast}):=\varprojlim_{h\to0^{+}}\mathcal{S}^{M_p,-h}(\Lambda^{\ast})$, where 
$$
\mathcal{S}^{M_p,-h}(\Lambda^{\ast})=\left\{(c_{\mu^*})_{\mu^*\in\Lambda^*}\in \mathbb{C}^{\Lambda^{\ast}}:\: \sup_{\mu^*\in\Lambda^*}|c_{\mu^*}|e^{- M(h\mu^*)}<\infty\right\}.
$$ 
The proofs of these assertions can be obtained exactly as for $\omega$-ultradistributions. We also refer to \cite{Gorba,Petzsche78} for studies involving Fourier series of $M_p$-ultradistributions.
 
\end{remark}
\smallskip

\section{Wave front sets of Fourier-Lebesgue type}
\label{Section F-L}
The aim of this section is to provide a discrete characterization of wave front sets of Fourier-Lebesgue type. Besides the conditions $(\alpha)$, $(\beta)$, and $(\gamma)$, we impose throughout this section the following additional assumption on $\omega$: The weight $\omega$ is a non-decreasing 
function of $|\xi|$, namely,
\begin{itemize}
\item [$(\alpha_0)$] $\omega(\xi)=\omega_{0}(|\xi|)$, $\xi\in\mathbb{R}^{d}$, where $\omega_{0}:[0,\infty)\to[0,\infty)$ is non-decreasing. 
\end{itemize}

We need to introduce some notation in order to define the Fourier-Lebesgue wave front set of an $\omega$-ultradistribution. 
 Let $v\in\mathscr{M}_{(\omega)}$, let $q\in[1,\infty]$, and let $\Gamma$ be a cone in $\mathbb{R}^d$. If $g\in\mathcal{S}'_{(\omega)}(\mathbb{R}^d)$ is such that $\widehat{g}$ is locally integrable in an open neighborhood of $\Gamma$, we consider the seminorm
\begin{equation}
\label{eqsFL}
|g|_{\mathcal{F} L^{q,\Gamma}_v} :=\|\widehat{g}\|_{L^q_{v}(\Gamma)}=\|\widehat{g}\:1_\Gamma \|_{L^q_{v}},
\end{equation}
where $1_A$ stands for the characteristic function of a set $A$. The seminorm (\ref{eqsFL}) is in particular well-defined if $g\in\mathcal{E}'_{(\omega)}(\mathbb{R}^d)$, but naturally it might become $\infty$.

Let now $f\in\mathcal{D}'_{(\omega)}(\Omega)$. The $\omega$-ultradistribution $f$ is said to be \emph{$\mathcal{F} L^q_v$-microlocally regular} at the point $(x_0,\xi_0)\in \Omega\times (\mathbb{R}^d \setminus \{ 0 \})$ if there are an open conic neighborhood $\Gamma$ of $\xi_0$ and a test function $\varphi\in\mathcal{D}_{(\omega)}(\Omega)$ with $\varphi(x_0)\neq0$ such that 
\begin{equation}
\label{eqe1}
|\varphi f|_{\mathcal{F} L^{q,\Gamma}_v }<\infty.
\end{equation}
The wave front set $WF_{\mathcal{F} L^q_v}(f)$ consists of all those points $(x_0,\xi_0)\in \Omega\times (\mathbb{R}^d \setminus \{ 0 \})$ such that $f$ is \emph{not} $\mathcal{F} L^q_v$-microlocally regular at  $(x_0,\xi_0)$.

 We are now ready to state the main theorem of this section, a discrete characterization of $WF_{\mathcal{F} L^q_v}(f)$ with respect to a lattice.

\begin{theorem}\label{main} Let $\Lambda$ be a lattice in $\mathbb{R}^{d}$, $f \in \mathcal{D}_{(\omega)}'(\Omega)$, $v\in\mathscr{M}_{(\omega)}$, $q\in[1,\infty]$, and $(x_0, \xi_0) \in \Omega \times (\mathbb{R}^d \backslash \{ 0 \})$. Suppose that $U$ is an  open convex neighborhood of the origin such that $U\cap\Lambda^{*}=\{0\}$ and $x_0+U\subseteq\Omega$. Then, the following statements are equivalent:
\begin{itemize}
\item[$(i)$] There are an open conic neighborhood $\Gamma$ of $\xi_0$ and $\varphi \in \mathcal{D}_{(\omega)}(x_{0}+U)$ with $\varphi(x_0)\neq 0$ such that
\[
 \left\|(\widehat{\varphi f}(\mu))_{\mu\in\Gamma\cap\Lambda} \right\|_{l^{q}_{v}(\Gamma\cap\Lambda)} < \infty.
\]\item[$(ii)$] $f$ is $\mathcal{F} L^q_v$-microlocally regular at  $(x_0,\xi_0)$.
\end{itemize}
\end{theorem}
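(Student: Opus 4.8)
The plan is to prove both implications by relating $\varphi f$ to its periodization over the \emph{dual} lattice $\Lambda^{*}$, splitting the relevant Fourier object into a part carried by the cone $\Gamma$ and a complementary remainder, and treating these two parts respectively by a Young-type ``sampling'' inequality and by a cone-separation argument. We shall use three elementary facts about $\omega$. First, subadditivity $(\alpha)$ together with $(\alpha_{0})$ yields, for each $c\in(0,1]$, a constant $c'>0$ with $\omega_{0}(cs)\geq c'\omega_{0}(s)$ for all $s\geq0$ (because $\omega_{0}(s)\leq n\,\omega_{0}(s/n)$). Second, by $(\gamma)$, for $N$ large the function $\xi\mapsto e^{-N\omega(\xi)}$ is integrable and $\sup_{\xi}\sum_{\mu\in\Lambda}e^{-N\omega(\xi-\mu)}<\infty$; a Hölder argument in the summation index then gives the sampling inequality $\|\sum_{\mu\in\Lambda}a_{\mu}e^{-N\omega(\,\cdot\,-\mu)}\|_{L^{q}}\leq C\|(a_{\mu})\|_{\ell^{q}(\Lambda)}$ (trivially for $q=\infty$). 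Third, since $\varphi f\in\mathcal{E}'_{(\omega)}(\mathbb{R}^{d})$, the entire function $\widehat{\varphi f}$ obeys $|\widehat{\varphi f}(\xi)|\leq Ce^{\lambda_{0}\omega(\xi)}$ on $\mathbb{R}^{d}$ for some $\lambda_{0}>0$, and every $v\in\mathscr{M}_{(\omega)}$ satisfies $v(\xi)\leq Ce^{\lambda\omega(\xi)}$.

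To prove $(ii)\Rightarrow(i)$, let $\Gamma$ and $\varphi\in\mathcal{D}_{(\omega)}(\Omega)$ with $\varphi(x_{0})\neq0$ witness the microlocal regularity of $f$ at $(x_{0},\xi_{0})$, i.e.\ $\widehat{\varphi f}\in L^{q}_{v}(\Gamma)$, and set $\widetilde{\varphi}:=\chi\varphi$ for a cutoff $\chi\in\mathcal{D}_{(\omega)}(x_{0}+U)$ with $\chi(x_{0})\neq0$, so that $\widetilde{\varphi}\in\mathcal{D}_{(\omega)}(x_{0}+U)$, $\widetilde{\varphi}(x_{0})\neq0$, and $\widehat{\widetilde{\varphi} f}=\widehat{\chi}\ast\widehat{\varphi f}$. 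Split $\widehat{\varphi f}=\widehat{\varphi f}\,1_{\Gamma}+\widehat{\varphi f}\,1_{\mathbb{R}^{d}\setminus\Gamma}$. On the first piece, $\widehat{\varphi f}\,1_{\Gamma}\in L^{q}_{v}$ by $(ii)$; estimating $|(\widehat{\chi}\ast(\widehat{\varphi f}1_{\Gamma}))(\mu)|\,v(\mu)$ by pushing the weight $v(\mu)$ through $\widehat{\chi}$ (via moderateness of $v$ and the rapid decay of $\widehat{\chi}$) and then applying the sampling inequality shows that the restriction of $\widehat{\chi}\ast(\widehat{\varphi f}1_{\Gamma})$ to $\Lambda$ lies in $\ell^{q}_{v}(\Lambda)$. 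On the second piece, shrink $\Gamma$ to a conic neighbourhood $\widetilde{\Gamma}\Subset\Gamma$ of $\xi_{0}$; then $|\mu-\xi|\geq c(|\mu|+|\xi|)$ for $\mu\in\widetilde{\Gamma}$, $\xi\notin\Gamma$, and combining this with the bound on $\widehat{\varphi f}$, subadditivity of $\omega$ (to transfer the weights $e^{\lambda_{0}\omega}$ across $\widehat{\chi}$), and the scaling property $\omega_{0}(c|\mu|)\geq c'\omega_{0}(|\mu|)$ shows that $\widehat{\chi}\ast(\widehat{\varphi f}1_{\mathbb{R}^{d}\setminus\Gamma})$ decays on $\widetilde{\Gamma}$ faster than $e^{-\kappa\omega}$ for every $\kappa>0$, so its restriction to $\widetilde{\Gamma}\cap\Lambda$ is summable against $v(\mu)^{q}$. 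Hence $(i)$ holds with $\widetilde{\varphi}$ and $\widetilde{\Gamma}$.

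To prove $(i)\Rightarrow(ii)$, set $g:=(\varphi f)_{\mathfrak{p}_{\Lambda^{*}}}=\sum_{\nu\in\Lambda^{*}}(\varphi f)(\,\cdot\,+\nu)\in\mathcal{D}'_{\mathfrak{p}_{\Lambda^{*}},(\omega)}$. Because $\operatorname{supp}\varphi$ is compact, $\operatorname{supp}\varphi-x_{0}\subseteq U$, and $U\cap\Lambda^{*}=\{0\}$, the supports of the translates $(\varphi f)(\,\cdot\,+\nu)$, $\nu\in\Lambda^{*}\setminus\{0\}$, avoid a fixed ball $B(x_{0},\delta)$, whence $g=\varphi f$ on $B(x_{0},\delta)$; moreover a short computation with the partition of unity $\eta$ of Lemma~\ref{lemma partition unity} for the lattice $\Lambda^{*}$ (using that $\mu\cdot\nu\in\mathbb{Z}$ for $\mu\in\Lambda$, $\nu\in\Lambda^{*}$) gives $c_{\mu}(g)=|\Lambda^{*}|^{-1}\widehat{\varphi f}(\mu)$ for $\mu\in\Lambda$. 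By Proposition~\ref{perdis} applied to $\Lambda^{*}$ we then have $g=\sum_{\mu\in\Lambda}c_{\mu}(g)e_{\mu}$; write $g=g_{1}+g_{2}$ with $g_{1}:=\sum_{\mu\in\Gamma\cap\Lambda}c_{\mu}(g)e_{\mu}$, both series converging in $\mathcal{D}'_{\mathfrak{p}_{\Lambda^{*}},(\omega)}$, and note that $(c_{\mu}(g))_{\mu\in\Gamma\cap\Lambda}\in\ell^{q}_{v}(\Lambda)$ by $(i)$. Pick $\psi\in\mathcal{D}_{(\omega)}(\mathbb{R}^{d})$ with $\psi(x_{0})\neq0$ and $\operatorname{supp}\psi$ a compact subset of $B(x_{0},\delta)$, so that $\psi\varphi f=\psi g=\psi g_{1}+\psi g_{2}$, with $\psi\varphi\in\mathcal{D}_{(\omega)}(\Omega)$ and $(\psi\varphi)(x_{0})\neq0$. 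From $\widehat{\psi g_{1}}(\xi)=\sum_{\mu\in\Gamma\cap\Lambda}c_{\mu}(g)\widehat{\psi}(\xi-\mu)$ the sampling estimate of the previous paragraph gives $\widehat{\psi g_{1}}\in L^{q}_{v}(\mathbb{R}^{d})$, whereas for $\psi g_{2}$, restricting to a conic neighbourhood $\Gamma'\Subset\Gamma$ of $\xi_{0}$ and running the out-of-cone remainder estimate with $|c_{\mu}(g)|\leq Ce^{\lambda_{0}\omega(\mu)}$ in the role of $|\widehat{\varphi f}(\xi)|\leq Ce^{\lambda_{0}\omega(\xi)}$ gives $|\widehat{\psi g_{2}}(\xi)|\leq Ce^{-\kappa\omega(\xi)}$ on $\Gamma'$ for every $\kappa>0$, hence $\widehat{\psi g_{2}}\in L^{q}_{v}(\Gamma')$. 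Therefore $|\psi\varphi f|_{\mathcal{F} L^{q,\Gamma'}_{v}}\leq\|\widehat{\psi g_{1}}\|_{L^{q}_{v}}+\|\widehat{\psi g_{2}}\|_{L^{q}_{v}(\Gamma')}<\infty$, i.e.\ $f$ is $\mathcal{F} L^{q}_{v}$-microlocally regular at $(x_{0},\xi_{0})$.

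I expect the main obstacle to be the out-of-cone remainder, which recurs in both directions: one must upgrade the merely ``ultradistributional'' growth $e^{\lambda_{0}\omega}$ of $\widehat{\varphi f}$ (respectively of the coefficients $c_{\mu}(g)$) to decay $e^{-\kappa\omega}$ of arbitrary order, using only the rapid decay of $\widehat{\chi}$ (respectively $\widehat{\psi}$) together with the geometric gain $|\xi-\mu|\geq c(|\xi|+|\mu|)$ coming from cone separation. Since the separation constant $c$ is $<1$, this genuinely requires bounding $\omega_{0}(c|\cdot|)$ below by a fixed multiple of $\omega_{0}(|\cdot|)$, which is exactly what $(\alpha)$ and $(\alpha_{0})$ deliver; for a general weight $\omega$ lacking $(\alpha_{0})$ the argument would break down here. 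Everything else --- the identity $c_{\mu}(g)=|\Lambda^{*}|^{-1}\widehat{\varphi f}(\mu)$ and the coincidence $g=\varphi f$ near $x_{0}$, the convergence of the sub-series $g_{1},g_{2}$ via Proposition~\ref{perdis}, the endpoint $q=\infty$ in the sampling inequality, and the successive (harmless) shrinkings of the conic neighbourhood --- is routine bookkeeping.
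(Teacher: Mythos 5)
Your argument is correct, but it follows a genuinely different route from the one in the paper, so let me compare. The paper first proves Lemma \ref{mainlemma}, a \emph{uniform} version of both conditions over bounded subsets $B$ of $\mathcal{D}_{(\omega)}(x_0+U_1)$, and then derives the two implications from that uniformity: for $(i)\Rightarrow(ii)$ it covers $\Gamma_2\cap\{|\xi|\geq r\}$ by the translates $\mu+I_\Lambda$ and uses the modulated family $\{\psi e_{-t}: t\in I_\Lambda\}$ together with the identity $\widehat{\psi f}(t+\mu)=\widehat{\psi_t f}(\mu)$; for $(ii)\Rightarrow(i)$ it compares $\widehat{\psi f}(\mu)$ with its values on the cell $\mu+I_\Lambda$ via a mean-value/gradient estimate, the gradient being controlled through the bounded family $\{x_k e_y\psi : y\in I_\Lambda\}$. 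You dispense with the uniform lemma and with both the modulation and the mean-value tricks: in each direction you write the relevant quantity as a (semi-)discrete convolution --- $\widehat{\psi g_1}(\xi)=\sum_{\mu}c_\mu\widehat{\psi}(\xi-\mu)$ in one direction, $(\widehat{\chi}\ast\widehat{\varphi f})\big|_{\Lambda}$ in the other --- split it into the in-cone part (handled by a Young-type inequality between $\ell^q_v(\Lambda)$ and $L^q_v(\mathbb{R}^d)$) and the out-of-cone remainder (handled by cone separation, the a priori bound $e^{\lambda_0\omega}$, and the scaling $\omega_0(c s)\geq c'\omega_0(s)$), exactly as in the paper's proof of Lemma \ref{mainlemma} but with the free variable allowed to be continuous. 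This buys a shorter, more symmetric proof in which the two implications are literally transposes of one another; what it costs is that you must invoke \emph{both} transposed forms of the semi-discrete Young inequality, whereas you only state one of them (the discrete-to-continuous bound $\|\sum_\mu a_\mu e^{-N\omega(\cdot-\mu)}\|_{L^q}\leq C\|(a_\mu)\|_{\ell^q}$); the step in $(ii)\Rightarrow(i)$ where you conclude that $\widehat{\chi}\ast(\widehat{\varphi f}\,1_\Gamma)$ restricted to $\Lambda$ lies in $\ell^q_v(\Lambda)$ needs the continuous-to-discrete counterpart, proved by tiling $\mathbb{R}^d$ by $\nu+I_\Lambda$ and applying H\"older on each cell before the discrete Young inequality. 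That counterpart is equally standard under $(\gamma)$, so this is a bookkeeping imprecision rather than a gap. Two further points you glossed over but which do go through: the coincidence $g=\varphi f$ near $x_0$ requires using that $\operatorname{supp}\varphi$ is a \emph{compact} subset $K$ of $x_0+U$, so that $K+B(0,\delta)\subseteq U$ and hence meets $\Lambda^*$ only at $0$; and your identification $c_\mu(g)=|\Lambda^*|^{-1}\widehat{\varphi f}(\mu)$ agrees (up to the harmless normalization) with the one the paper uses inside Lemma \ref{mainlemma}. Finally, your closing diagnosis is accurate: the out-of-cone estimate is exactly where the paper also spends its effort, and the hypotheses $(\alpha)$ and $(\alpha_0)$ enter there in the same way (through $\omega(\mu/m)$) in both arguments.
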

Part of the proof of Theorem \ref{main} is based on the ensuing lemma:

\begin{lemma}\label{mainlemma} $\mbox{ }$
\begin{itemize}
\item[$(i)'$] Condition $(i)$ from Theorem \ref{main} implies that there are an open conic neighborhood $\Gamma_1$ of $\xi_0$ and an open neighborhood $U_1\subseteq U$ of the origin such that for every bounded set $B \subseteq \mathcal{D}_{(\omega)}(x_0+U_1)$
\[ \sup_{\psi \in B} \left\|(\widehat{\psi f}(\mu))_{\mu\in \Gamma_1\cap\Lambda} \right\|_{l^{q}_{v}(\Gamma_1\cap\Lambda)}<\infty .\]
\item[$(ii)'$] Condition $(ii)$ from Theorem \ref{main} implies that there are an open conic neighborhood $\Gamma_1$ of $\xi_0$ and an open neighborhood $U_1$ of the origin such that for every bounded set $B \subseteq \mathcal{D}_{(\omega)}(x_0+U_1)$
\[ \sup_{\psi \in B} |\psi f|_{ \mathcal{F}L^{q,\Gamma_1}_{v}} 
< \infty.\]
\end{itemize}
\end{lemma}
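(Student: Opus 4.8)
The plan is to prove both parts by the same mechanism: upgrading a single good cutoff (or a single estimate on the Fourier side) into a \emph{uniform} estimate over all cutoffs supported in a slightly smaller neighborhood. The engine is the elementary observation that if $\varphi \in \mathcal{D}_{(\omega)}(x_0+U)$ with $\varphi(x_0)\neq 0$, then any $\psi \in \mathcal{D}_{(\omega)}(x_0+U_1)$ with $U_1$ chosen so that $\operatorname{supp}\psi$ stays inside $\{\varphi \neq 0\}$ can be written as $\psi = \chi \varphi$ for some $\chi \in \mathcal{E}_{(\omega)}(\mathbb{R}^d)$, so that $\widehat{\psi f} = \widehat{\chi \varphi f} = \widehat{\chi} \ast \widehat{\varphi f}$. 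First I would make this precise: shrink $U$ to an open convex $U_1 \ni 0$ with $x_0 + \overline{U_1} \subseteq \{x : \varphi(x)\neq 0\} \cap (x_0+U)$ (possible since $\varphi(x_0)\neq 0$ and $\varphi$ is continuous), and note that division by $\varphi$ on a neighborhood of $x_0+\overline{U_1}$ together with multiplication by a fixed cutoff equal to $1$ there gives a \emph{continuous linear} map $\mathcal{D}_{(\omega)}(x_0+U_1) \to \mathcal{E}'_{(\omega)}(\mathbb{R}^d)$, $\psi \mapsto \chi_\psi$, with $\psi = \chi_\psi \varphi$; hence a bounded set $B \subseteq \mathcal{D}_{(\omega)}(x_0+U_1)$ produces a bounded set $\{\chi_\psi : \psi \in B\}$ in $\mathcal{E}'_{(\omega)}(\mathbb{R}^d)$, which in turn means the family $\{\widehat{\chi_\psi}\}$ satisfies a \emph{uniform} bound $|\widehat{\chi_\psi}(\xi)| \leq C e^{-\lambda \omega(\xi)}$ for some (in the Roumieu case, for every) $\lambda > 0$ and a uniform $C$.

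For part $(ii)'$ this already does essentially everything once combined with a cone-narrowing convolution estimate: starting from $|\varphi f|_{\mathcal{F}L^{q,\Gamma}_v} < \infty$, pick a smaller open conic neighborhood $\Gamma_1 \Subset \Gamma$ of $\xi_0$ (meaning $\overline{\Gamma_1} \cap S^{d-1} \subset \Gamma$), split $\widehat{\chi_\psi} \ast \widehat{\varphi f}$ over $\Gamma_1$ into the part of the convolution kernel with frequency in a fixed neighborhood of $\Gamma$ and the complementary part; the first piece is controlled by Young's inequality using $\|\widehat{\varphi f}\|_{L^q_v(\Gamma)} < \infty$, the $(\omega)$-moderateness of $v$ (which converts $v(\xi)$ against $v(\xi-\eta)e^{\lambda\omega(\eta)}$), and the integrability of $e^{-\lambda\omega}$ guaranteed by $(\beta)$; the second piece is a rapidly decreasing tail where one exploits that on $\Gamma_1$ and outside a cone about $\Gamma$ one gains a factor $e^{-c\omega(\xi)}$ or a polynomial gain (using $(\gamma)$ and $(\alpha_0)$ to compare $\omega(\xi)$ and $\omega(\xi-\eta)$ when $\xi \in \Gamma_1$, $\xi - \eta \notin$ the enlarged cone), together with the a priori temperedness of $\widehat{\varphi f}$. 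The uniformity over $B$ is inherited verbatim because every constant above depends only on the uniform bound on $\{\widehat{\chi_\psi}\}$.

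Part $(i)'$ is the discrete analogue and runs along the same lines, but on the lattice: from $\|(\widehat{\varphi f}(\mu))_{\mu\in\Gamma\cap\Lambda}\|_{l^q_v(\Gamma\cap\Lambda)} < \infty$ one must bound $\|(\widehat{\psi f}(\mu))_{\mu\in\Gamma_1\cap\Lambda}\|_{l^q_v(\Gamma_1\cap\Lambda)}$ with $\widehat{\psi f}(\mu) = (\widehat{\chi_\psi}\ast\widehat{\varphi f})(\mu)$; now the convolution is a genuine integral against a continuous function, not a lattice sum, so the maneuver is to write $\widehat{\varphi f} = \sum_{\mu \in \Lambda}$ (something supported near $\mu + I_{\Lambda^*}$) — i.e. periodize/discretize via a partition of unity of the type in Lemma \ref{lemma partition unity} adapted to $\Lambda^*$ — thereby reducing $(\widehat{\chi_\psi}\ast\widehat{\varphi f})(\mu)$ to a discrete convolution over $\Lambda$ of the sampled values $\widehat{\varphi f}(\nu)$, $\nu \in \Lambda$, against the $\ell^1(\Lambda)$-summable weights $\sup_{\xi \in \nu + U}|\widehat{\chi_\psi}(\xi)|$; the cone splitting $\Gamma_1 \Subset \Gamma$ and the moderate-weight manipulation are then identical to part $(ii)'$. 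The main obstacle, and the step I expect to require the most care, is precisely this passage from a continuous convolution to a discrete one while simultaneously controlling the cone geometry: one needs $U \cap \Lambda^* = \{0\}$ to ensure the sampling is faithful, and one must keep the weight $v$, the exponential $e^{-\lambda\omega}$, and the conic cutoffs all compatible so that nothing blows up on the ``off-diagonal'' frequencies $\nu \notin \Gamma$. Everything else — the division by $\varphi$, Young's inequality, the tail estimates from $(\beta)$ and $(\gamma)$ — is routine, and the Roumieu case differs only in the usual quantifier switch on $\lambda$.
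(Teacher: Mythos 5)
Your plan for part $(ii)'$ is essentially the paper's argument: write $\psi=\psi_1\varphi$ with $\psi_1$ ranging over a bounded set as $\psi$ ranges over $B$, then estimate the continuous convolution $\widehat{\psi_1}\ast\widehat{\varphi f}$ on a shrunken cone $\Gamma_1$ by splitting the frequency of $\widehat{\varphi f}$ into $\Gamma$ and its complement, using Young's inequality, the $(\omega)$-moderateness of $v$, and the geometric inequality $|\xi-y|\geq m^{-1}\max(|\xi|,|y|)$ for $\xi\in\Gamma_1$, $y\notin\Gamma$. One point you assert rather than prove: that $\kappa/\varphi$ belongs to $\mathcal{D}_{(\omega)}$ (equivalently, that your division map $\psi\mapsto\chi_\psi$ lands in the right class) is not automatic in these ultradifferentiable classes; the paper justifies it via Beurling's theorem on the regularity of the algebras $L^{1}_{e^{\lambda\omega}}$, which yields inverse closedness of $\mathcal{E}_{(\omega)}(\mathbb{R}^{d})$. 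This should be cited, not taken for granted.

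The genuine gap is in part $(i)'$. You start from the continuous convolution identity $\widehat{\psi f}(\mu)=(\widehat{\chi_\psi}\ast\widehat{\varphi f})(\mu)$ and propose to reduce it to a discrete convolution of the sampled values $\widehat{\varphi f}(\nu)$, $\nu\in\Lambda$, against the weights $\sup_{\xi\in\nu+U}|\widehat{\chi_\psi}(\xi)|$. That reduction cannot be carried out as an estimate: the integral $\int\widehat{\chi_\psi}(\mu-\eta)\widehat{\varphi f}(\eta)\,\mathrm{d}\eta$ involves $\widehat{\varphi f}$ at \emph{all} frequencies, whereas hypothesis $(i)$ of Theorem \ref{main} controls only the lattice samples on $\Gamma\cap\Lambda$; bounding the contribution of a cell $\nu+I_{\Lambda^{*}}$ by the single value $|\widehat{\varphi f}(\nu)|$ requires controlling the oscillation of $\widehat{\varphi f}$ across the cell, which is exactly the content of the implication $(i)\Rightarrow(ii)$ of Theorem \ref{main} that this lemma is meant to feed (there it is handled with the gradient trick $\nabla\widehat{\psi f}=-2\pi i\,\mathcal{F}(x\psi f)$), so invoking it here would be circular. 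What makes $(i)'$ work is that the relevant identity is \emph{exactly} discrete from the outset: since $\operatorname{supp}\psi_1$ and $\operatorname{supp}(\varphi f)$ lie in $x_0+U$, which sits inside a fundamental region of $\Lambda^{*}$, one has $(\psi f)_{\mathfrak{p}_{\Lambda^{*}}}=(\psi_1)_{\mathfrak{p}_{\Lambda^{*}}}(\varphi f)_{\mathfrak{p}_{\Lambda^{*}}}$, and the Fourier coefficients of these periodizations are precisely the lattice samples of the corresponding Fourier transforms; hence $\widehat{\psi f}(\mu)=\sum_{\beta\in\Lambda}\widehat{\varphi f}(\beta)\,\widehat{\psi_1}(\mu-\beta)$ with only samples of $\widehat{\varphi f}$ on $\Lambda$ appearing. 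Once you have this identity, your cone splitting, Young's inequality on $l^{q}(\Lambda)$, the a priori bound $|\widehat{\varphi f}(\beta)|v(\beta)\leq De^{\lambda_0\omega(\beta)}$, and the uniform decay of $\widehat{\psi_1}$ over the bounded set all go through as you describe.
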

\begin{proof}
We only prove $(i)'$, because the second assertion can be established in a similar fashion by replacing sums by integrals. Assume condition $(i)$ of Theorem \ref{main}. We begin by finding suitable $\Gamma_1$ and $U_1$.
Choose an open conic neighborhood $\Gamma_1$ of $\xi_0$ such that $\overline{\Gamma}_1 \subseteq \Gamma \cup \{ 0 \}$. Let $m \in \mathbb{Z}_+$  be such that $m^{-1}$ is smaller than the distance between $\partial \Gamma$ and the intersection of $\Gamma_1$ with the unit sphere, and also smaller than
the distance between  $\partial \Gamma_1$  and the intersection of $\overline{\mathbb{R}^d \backslash \Gamma}$ with the unit
sphere. Hence $\xi \in \Gamma_1$ and $y \notin \Gamma$ imply that $|\xi - y| \geq  m^{-1}\max(|\xi|, |y|)$. As $U_1$ we select any open neighborhood $U_1$ of $0$ such that $\overline{U}_1\Subset U\cap\{x:\:\varphi(x+x_0)\neq 0\}=:U_2$.  In addition, pick $\kappa\in\mathcal{D}_{(\omega)}(x_0+U_2)$ such that $\kappa\equiv 1$ on $x_0+U_1$.  Observe that\footnote{In fact, Beurling theorem \cite{beurling,Bjorck} tells us that the condition $(\beta)$ is equivalent to the regularity of the Beurling algebras $L^{1}_{e^{\omega_{\lambda}}}$ with weights $\omega_{\lambda}(\xi):=\lambda\omega(\xi)$. As a consequence of the general theory of regular commutative Banach algebras \cite{G-R-S}, one obtains that analytic functions act locally on each $\mathcal{F}(L^{1}_{e^{\omega_{\lambda}}})$; in particular, $\mathcal{E}_{(\omega)}(\mathbb{R}^{d})$ and $\mathcal{E}_{\{\omega\}}(\mathbb{R}^{d})$ are inverse closed.} $\theta:=\kappa/\varphi \in\mathcal{D}_{(\omega)}(x_0+U_2)$. Let now 
 $B\subseteq \mathcal{D}_
 {(\omega)}(x_0+U_1)$ be a bounded subset. If $\psi\in B$ we have that $\psi f=\varphi \psi_1 f$, where $\psi_1$ is an element of the bounded set $B_1:=\theta B\subseteq \mathcal{D}_{(\omega)}(x_0+U_1)$.  Next, notice that there is a fundamental region $I_{\Lambda^{\ast}}$ of the dual lattice $\Lambda^*$ such that $x_0+U\subseteq I_{\Lambda^*}$. Since for an arbitrary $g\in \mathcal{E}'_{(\omega)}(\mathbb{R}^{d})$ having support in  the interior of $I_{\Lambda^{\ast}}$ we have that the Fourier coefficients of its $\Lambda^*$-periodization (cf. Section \ref{periodic ultradistributions}) are given by $(\widehat{g}(\mu))_{\mu\in\Lambda}$, we obtain
\[ (\varphi f)_{p_{\Lambda^*}} = \sum_{\mu \in \Lambda} a_\mu e_\mu, \] 
with $a_\mu=\widehat{\varphi f}(\mu)$. Furthermore, since $(\psi f)_{\mathfrak{p}_{\Lambda^*}}= (\varphi f)_{\mathfrak{p}_{\Lambda^*}}(\psi_1)_{\mathfrak{p}_{\Lambda^*}}$, we conclude that
\[ \widehat{\psi f}(\mu) = \sum_{\beta \in \Lambda} a_\beta \widehat{\psi}_1(\mu-\beta), \qquad \mu\in\Lambda.  \]
Thus, by (\ref{moderate}),
\begin{align*} 
 \left\|(\widehat{\psi f}(\mu))_{\mu\in\Gamma_1\cap \Lambda} \right\|_{l^{q}_{v}(\Gamma_1\cap\Lambda)} 
&\leq C  \left \|\left(\sum_{\beta\in\Lambda}|a_\beta|v(\beta)|\widehat{\psi_1}(\mu-\beta)|e^{\lambda \omega(\mu-\beta)} \right)_{\mu\in\Gamma_1\cap\Lambda}\right \|_{l^q(\Gamma_1\cap\Lambda)}
 \\
& \leq C(I_1(\psi_1) + I_2(\psi_1)), \quad \forall\psi\in B,
\end{align*}
where
\[ I_1(\psi_1) = \left \|\left(\sum_{\beta  \in \Gamma \cap \Lambda}|a_\beta|v(\beta)|\widehat{\psi_1}(\mu-\beta)|e^{\lambda \omega(\mu-\beta)}\right)_{\mu\in\Gamma_1\cap\Lambda} \right \|_{l^q(\Gamma_1\cap\Lambda)},
\]
and
\[ I_2(\psi_1) = \left \|\left(\sum_{\beta  \notin \Gamma \cap \Lambda}|a_\beta|v(\beta)|\widehat{\psi_1}(\mu-\beta)|e^{\lambda \omega(\mu-\beta)} \right)_{\mu\in\Gamma_1\cap\Lambda}\right \|_{l^q(\Gamma_1\cap\Lambda)}
.\]
Young's inequality and the boundedness of $B_1$ imply that
\[ \sup_{\psi_1 \in B_1}  I_1(\psi_1) \leq  \left\|(\widehat{\varphi f}(\mu))_{\mu\in\Gamma\cap \Lambda} \right\|_{l^{q}_{v}(\Gamma\cap\Lambda)} \sup_{\psi_1 \in B_1} \sum_{\beta \in \Lambda} |\widehat{\psi_1}(\beta)|e^{\lambda\omega(\beta)} < \infty. \]
We now estimate $I_2(\psi_1)$. By Proposition \ref{perdis} and the fact that $v$ is $\omega$-moderate there exist $D, \lambda_0 > 0$ such that
\begin{equation}
\label{eqe2}
 |a_\beta|v(\beta) \leq De^{\lambda_0\omega(\beta)}, \qquad \forall \beta \in \Lambda.
\end{equation}
Since $B_1$ is bounded, we have that for every $\gamma > 0$ there exists $C_\gamma > 0$ such that
\begin{equation}
\label{eqe3}
 \sup_{\psi_1 \in B_1}|\widehat{\psi_1}(\xi)| \leq C_\gamma e^{-\gamma\omega(\xi)}, \qquad \forall \xi \in \mathbb{R}^d. 
 \end{equation}
Hence
\[\sup_{\psi_1 \in B_1}  I_2(\psi_1) \leq A \left \|\left( \sum_{\beta \notin \Gamma \cap \Lambda}e^{\lambda_0\omega(\beta)-\gamma_0\omega(\mu-\beta)}\right)_{\mu\in\Gamma_1\cap\Lambda}\right \|_{l^q(\Gamma_1\cap\Lambda)}\]
where $A = DC_{\gamma_0 + \lambda}$. In view of the choice of the cone $\Gamma_1$ and the constant $m$, we have that
\begin{align*}
\sup_{\psi_1 \in B_1}  I_2(\psi_1) &\leq A \left \|\left(e^{-(\gamma_0/2)\omega(\mu/m)} \sum_{\beta  \notin \Gamma \cap \Lambda}e^{\lambda_0\omega(\beta)-(\gamma_0/2)\omega(\beta/m)}\right)_{\mu\in\Gamma_1\cap\Lambda}\right \|_{l^q(\Gamma_1\cap\Lambda)}  
\\
&\leq A\left \|\left(e^{-(\gamma_0/2)\omega(\mu/m)}\right)_{\mu\in\Gamma_1\cap\Lambda}\right\|_{l^q(\Gamma_1\cap\Lambda)} \sum_{\beta \in \Lambda}e^{(\lambda_0m  -(\gamma_0/2))\omega(\beta/m)}  < \infty,
\end{align*}
provided that $\gamma_0$ is large enough.
\end{proof}

We can now proceed to show Theorem \ref{main}.

\begin{proof}[Proof of Theorem \ref{main}.] Let $I_{\Lambda}$ be a fundamental region for $\Lambda$ with $0\in I_{\Lambda}$.

$(i) \Rightarrow (ii)$: Let $\Gamma_1$ and $U_1$ be as in the first part of Lemma \ref{mainlemma}. Choose an open conic neighborhood $\Gamma_2$ of $\xi_0$ such that $\overline{\Gamma}_2 \subseteq \Gamma_1 \cup \{ 0 \}$ and $\psi \in \mathcal{D}_{(\omega)}(x_0+ U_1)$ with $\psi (x_0)\neq 0$.  Note that $B = \left \{ \psi_t := \psi e_{-t} \, : \, t \in I_{\Lambda}\right\}$ is a bounded subset of $\mathcal{D}_{(\omega)}(x_0+U_1)$. Fix $r > 0$ such
that $\Gamma_2 \cap \{ \xi \in \mathbb{R}^d \, : \, |\xi| \geq r \} \subseteq (\Gamma_1 \cap \Lambda) + I_{\Lambda}$.
Set $D = \sup_{t \in I_\Lambda}e^{\lambda \omega(t)}$. For $q < \infty$ the first part of Lemma \ref{mainlemma} implies that
\begin{align*}
\int_{\substack{ \xi \in \Gamma_2 \\ |\xi| \geq r }} |\widehat{\psi f}(\xi)v(\xi)|^q \mathrm{d}\xi & \leq \sum_{\mu\in \Gamma_1 \cap \Lambda} \int_{I_{\Lambda}}  |\widehat{\psi f}(t+\mu)v(t+\mu)|^q \mathrm{d}t \\
&\leq C^qD^q\sum_{\mu \in \Gamma_1 \cap \Lambda}v(\mu)^q \int_{I_{\Lambda}}  |\widehat{\psi_t f}(\mu)|^q \mathrm{d}t \\
&\leq|\Lambda| C^qD^q\sup_{t \in I_{\Lambda}}\sum_{\mu\in \Gamma_1 \cap \Lambda}|\widehat{\psi_t f}(\mu)v(\mu)|^q < \infty, 
\end{align*}
while for $q=\infty$ we have
\begin{align*}
\underset{|\xi| \geq r}{\sup_{\xi \in \Gamma_2}} |\widehat{\psi f}(\xi)v(\xi)| & \leq \sup_{\mu\in \Gamma_1 \cap \Lambda} \sup_{t \in I_{\Lambda}}  |\widehat{\psi f}(t+\mu)v(t+\mu)| \\
&\leq CD\sup_{t \in I_{\Lambda}}\sup_{\mu \in \Gamma_1 \cap \Lambda}|\widehat{\psi_t f}(\mu)v(\mu)| < \infty. 
\end{align*}
$(ii) \Rightarrow (i)$: The case $q=\infty$ is trivial, so we assume $q < \infty$. Let $\Gamma_1$ and $U_1$ be as in the second part of Lemma \ref{mainlemma} and let $\psi \in \mathcal{D}_{(\omega)}(x_0+ U_1)$ with $\psi (x_0)\neq 0$.  Choose an open conic neighborhood $\Gamma_2$ of $\xi_0$ such that $\overline{\Gamma}_2 \subseteq \Gamma_1 \cup \{ 0 \}$ and $ r > 0$ so large that
$(\Gamma_2 + I_\Lambda) \cap \{ \xi \in \mathbb{R}^d \, : \, |\xi| \geq r \} \subseteq \Gamma_1$. Hence
\begin{align*}
\left(\sum_{\mu \in \Gamma_2 \cap \Lambda} |\widehat{\psi f}(\mu)v(\mu)|^q \right)^{1/q} &= \left( \frac{1}{|\Lambda|}\sum_{\mu \in \Gamma_2 \cap \Lambda} \int_{\mu + I_\Lambda}|\widehat{\psi f}(\mu)v(\mu)|^q \mathrm{d}\xi \right)^{1/q} \\
&\leq |\Lambda|^{-1/q}(J_1^{1/q} + J_2^{1/q}),
\end{align*}
where
\[ J_1 := \sum_{\mu \in \Gamma_2 \cap \Lambda} \int_{\mu + I_\Lambda}|\widehat{\psi f}(\mu)-\widehat{\psi f}(\xi) |^qv(\mu)^q \mathrm{d}\xi, \]
and
\begin{align*}
 J_2 &:= \sum_{\mu \in \Gamma_2 \cap \Lambda} \int_{\mu + I_\Lambda}|\widehat{\psi f}(\xi) |^qv(\mu)^q \mathrm{d}\xi \\
&\leq \sum_{|\mu| \leq r} \int_{\mu + I_\Lambda}|\widehat{\psi f}(\xi) |^qv(\mu)^q \mathrm{d}\xi + C^q \sup_{t \in I_\Lambda} e^{q \lambda \omega(t)} \int_{\Gamma_1}|\widehat{\psi f}(\xi)v(\xi)|^q \mathrm{d}\xi < \infty.
\end{align*}
We now estimate $J_1$. Set $D = \sup_{t \in I_\Lambda} |t|$. Since $I_\Lambda$ is convex and contains the origin, we have for $\mu \in \Gamma_2 \cap \Lambda$ and $\xi \in \mu + I_\Lambda$ that
\begin{align*}
|\widehat{\psi f}(\mu)-\widehat{\psi f}(\xi) |^q &\leq |\mu - \xi|^q \sup_{t \in [0,1]} | \nabla \widehat{\psi f}(\xi + t(\mu-\xi) )|^q \\
&\leq D^q \sup_{y \in I_\Lambda} | \nabla \widehat{\psi f}(\xi - y )|^q   \\
&\leq D^q \sum_{k = 1}^d \sup_{y \in I_\Lambda} |  \mathcal{F}(x_k e_y\psi f)(\xi)|^q. 
\end{align*}
Note that the set $\{ x_k e_y\psi: \:   y \in I_\Lambda\}$ is bounded in $\mathcal{D}_{(\omega)}(x_0 + U_1)$ for each $k = 1, \ldots, d$. Hence part $(ii)'$ of Lemma \ref{mainlemma} implies that
\begin{align*}
J_1 \leq &\sum_{|\mu| \leq r} \int_{\mu + I_\Lambda}|\widehat{\psi f}(\mu)-\widehat{\psi f}(\xi) |^qv(\mu)^q \mathrm{d}\xi \\
&+  C^qD^q \sup_{t \in I_\Lambda} e^{q\lambda \omega(t)} \sum_{k = 1}^d \sup_{y \in I_\Lambda}  \int_{\Gamma_1}|\mathcal{F}(x_k e_y\psi f)(\xi)|^q v(\xi)^q\mathrm{d}\xi < \infty,
\end{align*}
which concludes the proof of the theorem.
\end{proof}
\begin{remark}\label{rk4.3} Note that if the ultradistribution $f\in\mathcal{D}_{\{\omega\}}'(\Omega)$, then the regularity requirement on the $\varphi$ used in part $(i)$ from Theorem \ref{main} and in (\ref{eqe1}) for the definition of $\mathcal{F} L_{\nu}^{p}$-microregularity can be relaxed to: $\varphi \in \mathcal{D}_{\{\omega\}}(x_{0}+U)$ and $\varphi \in \mathcal{D}_{\{\omega\}}(\Omega)$, respectively, because the test function can always be replaced by one belonging to $\mathcal{D}_{(\omega)}(\Omega)$. In addition, if $v\in\mathscr{M}_{\{\omega\}}$, Lemma \ref{mainlemma} can be strengthened: The properties $(i)'$ and $(ii)'$  hold true for bounded subsets $B$ of $\mathcal{D}_{\{\omega\}}(x_0+U_1)$. 
\end{remark}
\smallskip

The proof of Lemma \ref{mainlemma} motives the introduction of the following discrete seminorms for $\Lambda^{\ast}$-periodic $\omega$-ultradistributions. Let $\Gamma$ be a cone and $g=\sum_{\mu\in\Lambda}c_{\mu}e_{\mu}\in \mathcal{D}'_{\mathfrak{p}_{\Lambda^{*}},(\omega)}$. In analogy to (\ref{eqsFL}), we write
\begin{equation}
\label{eqsFl}
|g|_{\mathcal{F} l^{q,\Gamma}_{v,\Lambda}} :=\left\|(c_\mu)_{\mu\in\Gamma\cap \Lambda} \right\|_{l^{q}_{v}(\Gamma\cap\Lambda)}=\|(c_\mu1_\Gamma(\mu))_{\mu\in\Lambda} \|_{l^q_{v}(\Lambda)}.
\end{equation}
Using (\ref{eqsFl}), the condition $(i)$ from Theorem \ref{main} might be restated as 
$$|(\varphi f)_{\mathfrak{p}_{\Lambda^*}}|_{\mathcal{F} l^{q,\Gamma}_{v,\Lambda}}<\infty,$$
and if this is the case we shall say that $f$ is $\mathcal{F} l^q_{v,\Lambda}$-\emph{microlocally regular} at  $(x_0,\xi_0)$. The wave front set $WF_{\mathcal{F} l^q_{v,\Lambda}}(f)$ can be defined as the complement of the set of  $(x_0,\xi_0)$ such that $f$ is $\mathcal{F} l^q_{v,\Lambda}$-microlocally regular at  $(x_0,\xi_0)$.
With this terminology, we may then rephrase Theorem \ref{main} as the following equality between wave front sets: 
\begin{equation}
\label{eqWF1}
WF_{\mathcal{F} L^q_{v}}(f)= WF_{\mathcal{F} l^q_{v,\Lambda}}(f),
\end{equation}
for any lattice $\Lambda$ in $\mathbb{R}^{d}$.

We now discuss several consequences of Theorem \ref{main}. Recall that if $X$ is a linear subspace of $\mathcal{D}_{(\omega)}'(\mathbb{R}^{d})$, its associated local space (on an open subset $\Omega$ of $\mathbb{R}^d$) is $X_{loc}(\Omega)=\{f\in\mathcal{D}_{(\omega)}'(\Omega):\: \varphi f\in X, \: \forall\varphi\in\mathcal{D}_{(\omega)}(\Omega) \}$. Using the $\Lambda$-periodization operator (\ref{eqperiodization}), one may talk about local spaces with respect to vector spaces of $\Lambda$-periodic ultradistributions. Indeed, if $Y$ is a linear subspace of  $\mathcal{D}'_{\mathfrak{p}_{\Lambda},(\omega)}$, we set $Y_{loc}(\Omega)=\{f\in\mathcal{D}_{(\omega)}'(\Omega):\: (\varphi f)_{\mathfrak{p}_{\Lambda}}\in Y, \: \forall \varphi\in\mathcal{D}_{(\omega)}(\Omega) \}$. If $X$ and $Y$ are t.v.s. of ultradistributions, the topologies of $X_{loc}(\Omega)$ and $Y_{loc}(\Omega)$ can be defined in the canonical way. Employing a standard partition of the unity argument, Theorem \ref{main} immediately yields:

\begin{corollary}
\label{clocspaces} Let $v\in\mathscr{M}_{(\omega)}$. Then, $(\mathcal{F} L^q_{v})_{loc}(\Omega)=(\mathcal{F} l^q_{v,\Lambda})_{loc}(\Omega)$ topologically, for any  lattice $\Lambda$ in $\mathbb{R}^d$. 
\end{corollary}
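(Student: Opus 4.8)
The plan is to deduce Corollary~\ref{clocspaces} from Theorem~\ref{main} by the standard localization mechanism, handling the equality of underlying sets and the equality of topologies separately.

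First I would record the elementary fact, valid for either of the two scales, that an ultradistribution lies in the associated local space on $\Omega$ if and only if the corresponding wave front set is empty. One implication is immediate from the definitions. For the converse one runs the classical H\"ormander patching: given $\varphi\in\mathcal{D}_{(\omega)}(\Omega)$, cover $\operatorname{supp}\varphi$ by finitely many neighborhoods and, over each of them, cover the unit sphere by finitely many open cones on which the microlocal regularity of $f$ is available; then sum the contributions after replacing the test functions furnished by Theorem~\ref{main} by the members of a $\mathcal{D}_{(\omega)}$-partition of unity subordinate to the chosen cover --- this replacement being exactly what the bounded-set strengthenings $(i)'$ and $(ii)'$ of Lemma~\ref{mainlemma} permit. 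Granting this, the set-theoretic identity $(\mathcal{F} L^q_v)_{loc}(\Omega)=(\mathcal{F} l^q_{v,\Lambda})_{loc}(\Omega)$ follows at once from the wave front set reformulation \eqref{eqWF1} of Theorem~\ref{main}. (Alternatively one verifies the two inclusions directly: $(\mathcal{F} L^q_v)_{loc}(\Omega)\subseteq(\mathcal{F} l^q_{v,\Lambda})_{loc}(\Omega)$ is elementary --- a Plancherel--P\'olya inequality applied to the entire functions $\widehat{\varphi\chi_j f}$, as in the next paragraph --- while the opposite inclusion is the implication $(i)\Rightarrow(ii)$ of Theorem~\ref{main} localized via a partition of unity.)

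For the topological statement I would fix once and for all a countable, locally finite partition of unity $\{\chi_j\}_j\subseteq\mathcal{D}_{(\omega)}(\Omega)$ subordinate to a cover of $\Omega$ by translates of $U$; the topologies of the two local spaces are then the Fr\'echet topologies generated, respectively, by the seminorms $f\mapsto\|\chi_j f\|_{\mathcal{F} L^q_v}$ and $f\mapsto|(\chi_j f)_{\mathfrak{p}_{\Lambda^{\ast}}}|_{\mathcal{F} l^q_{v,\Lambda}}$ (completeness is routine). Since the underlying sets coincide, the open mapping theorem reduces the problem to continuity of the identity map in one direction, and I would establish the easy one, $(\mathcal{F} L^q_v)_{loc}(\Omega)\to(\mathcal{F} l^q_{v,\Lambda})_{loc}(\Omega)$. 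For $\chi\in\mathcal{D}_{(\omega)}(\Omega)$ one writes $(\chi f)_{\mathfrak{p}_{\Lambda^{\ast}}}=\sum_j(\chi\chi_j f)_{\mathfrak{p}_{\Lambda^{\ast}}}$, a finite sum; since $\chi\chi_j f$ has compact support, $\widehat{\chi\chi_j f}$ is entire of exponential type, so a Plancherel--P\'olya (local submean-value) inequality combined with the $\omega$-moderateness \eqref{moderate} of $v$ gives $|(\chi\chi_j f)_{\mathfrak{p}_{\Lambda^{\ast}}}|_{\mathcal{F} l^q_{v,\Lambda}}\leq C\,\|\chi\chi_j f\|_{\mathcal{F} L^q_v}$, and then $\|\chi\chi_j f\|_{\mathcal{F} L^q_v}\leq C'\,\|\chi_j f\|_{\mathcal{F} L^q_v}$ because multiplication by $\chi\in\mathcal{E}_{(\omega)}$ is a bounded operator on $\mathcal{F} L^q_v$ (Young's inequality together with \eqref{moderate}). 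Summing over the finitely many indices $j$ with $\chi\chi_j\not\equiv 0$ yields the required estimate.

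I expect the genuine difficulty --- the one the open mapping theorem is invoked to avoid --- to be the reverse continuity if one insisted on proving it directly. Recovering $\|\widehat{\chi f}\,v\|_{L^q}$ from discrete data forces, exactly as in the proof of $(i)\Rightarrow(ii)$ in Theorem~\ref{main}, the use of $\mathbb{R}^d=\bigcup_{\mu\in\Lambda}(\mu+I_{\Lambda})$ and of the modulation identity $\widehat{\chi f}(\mu+t)=\widehat{(\chi e_{-t})f}(\mu)$, and hence the passage to the whole family $\{(\chi e_{-t})f:t\in I_{\Lambda}\}$; since $\{\chi e_{-t}:t\in I_{\Lambda}\}$ is a bounded subset of $\mathcal{D}_{(\omega)}$, one must apply part $(i)'$ of Lemma~\ref{mainlemma} --- to the pieces of a partition of $\chi$ and to finitely many cones covering the unit sphere --- and, crucially, verify that the resulting bound is not merely finite but is dominated, with a constant independent of $f$, by finitely many of the seminorms $|(\varphi f)_{\mathfrak{p}_{\Lambda^{\ast}}}|_{\mathcal{F} l^q_{v,\Lambda}}$. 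Inspecting the proof of part $(i)'$ of Lemma~\ref{mainlemma} confirms this: one may take the constant $D$ in \eqref{eqe2} to be the $l^{\infty}$-part of the relevant discrete seminorm and $\lambda_0=0$, so that the auxiliary term $I_2$ becomes an absolutely convergent series with a harmless multiplicative constant. Carrying out this bookkeeping is exactly what routing the argument through the open mapping theorem lets one skip.
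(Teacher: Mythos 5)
Your proposal is correct and follows the route the paper intends: the paper proves this corollary only by the remark that it follows from Theorem \ref{main} (and the bounded-set versions in Lemma \ref{mainlemma}) ``by a standard partition of unity argument,'' and your write-up is exactly that argument carried out, with a reasonable handling of the topological part (open mapping theorem plus the elementary continuous-to-discrete direction via a Plancherel--P\'olya/submean-value estimate for the compactly supported pieces). Nothing in your sketch deviates from, or adds a gap to, the intended proof.
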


It should be pointed out that Corollary \ref{clocspaces} remains valid even if we remove the assumption $(\alpha_0)$ on $\omega$.

The equality (\ref{eqWF1}) can be generalized to wave front sets of sup- and inf-types \cite{j-p-t-t,Pilipovictwo}. Indeed, let $(v_j)=(v_j)_{j\in J}$ and $(q_j)=(q_j)_{j\in J}$ be two indexed families with $v_j\in\mathscr{M}_{(\omega)}$ and $q_j\in[1,\infty]$, $\forall j\in J$. The wave front sets of $f\in\mathcal{D}_{(\omega)}'(\Omega)$ of inf-type with respect to the families of Fourier-Lebesgue spaces $(\mathcal{F} L^{q_j}_{v_j})_{j\in J}$ and $(\mathcal{F} l^{q_j}_{v_j,\Lambda})_{j\in J}$ are given by
$$
WF^{\text{inf}}_{(\mathcal{F} L^{q_j}_{v_j})}(f)=\bigcap_{j\in J}WF_{\mathcal{F} L^{q_j}_{v_j}}(f) \quad \mbox{and} \quad WF^{\text{inf}}_{(\mathcal{F} l^{q_j}_{v_j,\Lambda})}(f)=\bigcap_{j\in J}WF_{\mathcal{F} l^{q_j}_{v_j,\Lambda}}(f).
$$
On the other hand, the wave front set $WF^{\text{sup}}_{(\mathcal{F} L^{q_j}_{v_j})}(f)$ is defined as the complement in $\Omega\times(\mathbb{R}^d\setminus\{0\})$ of the set of points $(x_0,\xi_0)$ such that there are an open conic neighborhood $\Gamma$ of $\xi_0$ and $\varphi\in\mathcal{D}_{(\omega)}(\Omega)$ with $\varphi(x_0)\neq 0$ (both independent of the index $j$) such that $|\varphi f|_{\mathcal{F} L^{q_j,\Gamma}_{v_j}}<\infty$, for each $j\in J$.
A similar definition (employing the seminorms (\ref{eqsFl})) applies to the wave front of discrete type $WF^{\text{sup}}_{(\mathcal{F} l^{q_j}_{v_j,\Lambda})}(f)$; in this case one should also assume that $\varphi\in \mathcal{D}_{(\omega)}(x_0+U)$ where $U$ is as in Theorem \ref{main}. We obtain that following stronger version of (\ref{eqWF1}) as a corollary of the proofs of Theorem \ref{main} and Lemma \ref{mainlemma}.

\begin{theorem}\label{mainsup}
Let $v_j \in \mathscr{M}_{(\omega)}$ and $q_j \in [1, \infty]$, where $j$ runs over an index set $J$. For $f\in\mathcal{D}_{(\omega)}'(\Omega)$, we have
\begin{equation}
\label{eqWF2}
WF^{\textnormal{inf}}_{(\mathcal{F} L^{q_{j}}_{v_j})}(f)= WF^{\textnormal{inf}}_{(\mathcal{F} l^{q_j}_{v_j,\Lambda})}(f) \quad \mbox{and}\quad WF^{\textnormal{sup}}_{(\mathcal{F} L^{q_{j}}_{v_j})}(f)= WF^{\textnormal{sup}}_{(\mathcal{F} l^{q_j}_{v_j,\Lambda})}(f),
\end{equation}
for any lattice $\Lambda$ in $\mathbb{R}^{d}$.
\end{theorem}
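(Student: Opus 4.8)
The plan is to derive both identities in (\ref{eqWF2}) directly from Theorem \ref{main} together with a careful reading of the \emph{proofs} of Theorem \ref{main} and Lemma \ref{mainlemma}. The inf-type equality is immediate: by Theorem \ref{main} (equivalently, by (\ref{eqWF1})) we have $WF_{\mathcal{F} L^{q_j}_{v_j}}(f)=WF_{\mathcal{F} l^{q_j}_{v_j,\Lambda}}(f)$ for each individual $j\in J$, and intersecting over $j$ gives $WF^{\textnormal{inf}}_{(\mathcal{F} L^{q_j}_{v_j})}(f)=WF^{\textnormal{inf}}_{(\mathcal{F} l^{q_j}_{v_j,\Lambda})}(f)$. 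The real content is therefore the sup-type identity, where one must produce, for a point $(x_0,\xi_0)$ outside one of these wave front sets, a \emph{single} pair consisting of a conic neighborhood and a test function which witnesses (dis)continuous microlocal regularity simultaneously for every $j$.

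The key observation underlying the sup-type case is that in the proofs of parts $(i)'$ and $(ii)'$ of Lemma \ref{mainlemma} and of the implications $(i)\Rightarrow(ii)$ and $(ii)\Rightarrow(i)$ in Theorem \ref{main}, all auxiliary data built along the way — the shrunk conic neighborhoods $\Gamma_1,\Gamma_2$, the separation constant $m$, the origin neighborhoods $U_1\subseteq U_2$, the cutoffs $\kappa,\theta$, the test function $\psi$, the radius $r$, and the fundamental regions $I_\Lambda,I_{\Lambda^{*}}$ — depend only on the initial pair $(\Gamma,\varphi)$, on $\omega$, and on $\Lambda$; at no stage is the weight $v$ or the exponent $q$ used in their selection. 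The weight $v$ and the exponent $q$ enter only in the concluding estimates, through the moderateness inequality (\ref{moderate}), the bound (\ref{eqe2}), and Young's inequality, all of which merely produce constants depending on $v,q$. Since for a sup-type wave front set we only require \emph{finiteness} of the relevant (semi)norm for each $j$, such $j$-dependent constants are harmless, and one common choice of $(\Gamma',\psi)$ serves the entire family $(v_j,q_j)_{j\in J}$.

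Concretely, for $WF^{\textnormal{sup}}_{(\mathcal{F} L^{q_j}_{v_j})}(f)\subseteq WF^{\textnormal{sup}}_{(\mathcal{F} l^{q_j}_{v_j,\Lambda})}(f)$ I would take $(x_0,\xi_0)$ in the complement of the right-hand side, so that there are $\Gamma$ and $\varphi\in\mathcal{D}_{(\omega)}(x_0+U)$ with $\varphi(x_0)\ne0$, both independent of $j$, with $|(\varphi f)_{\mathfrak{p}_{\Lambda^{*}}}|_{\mathcal{F} l^{q_j,\Gamma}_{v_j,\Lambda}}<\infty$ for all $j$. Fixing one choice of $\Gamma_1$ and $U_1$ as in the proof of Lemma \ref{mainlemma}$(i)'$ — a choice that does not depend on $v_j,q_j$ — the conclusion of that part holds with $v_j,q_j$ for every $j$; then running the proof of $(i)\Rightarrow(ii)$ of Theorem \ref{main} with one fixed $\Gamma_2\subseteq\Gamma_1$ and one fixed $\psi\in\mathcal{D}_{(\omega)}(x_0+U_1)$, $\psi(x_0)\ne0$, yields $|\psi f|_{\mathcal{F} L^{q_j,\Gamma_2}_{v_j}}<\infty$ for all $j$ at once, and since $x_0+U\subseteq\Omega$ we have $\psi\in\mathcal{D}_{(\omega)}(\Omega)$, so $(x_0,\xi_0)\notin WF^{\textnormal{sup}}_{(\mathcal{F} L^{q_j}_{v_j})}(f)$. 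The reverse inclusion is entirely symmetric: start from $\Gamma$ and $\varphi\in\mathcal{D}_{(\omega)}(\Omega)$, $\varphi(x_0)\ne0$, with $|\varphi f|_{\mathcal{F} L^{q_j,\Gamma}_{v_j}}<\infty$ for all $j$; apply the proof of Lemma \ref{mainlemma}$(ii)'$ (shrinking $U_1$ so that $U_1\subseteq U$, which is harmless) to obtain $\Gamma_1,U_1$ valid for every $j$; and then apply the proof of $(ii)\Rightarrow(i)$ of Theorem \ref{main} with fixed $\Gamma_2,\psi$. Here the only additional point is that the definition of $WF^{\textnormal{sup}}_{(\mathcal{F} l^{q_j}_{v_j,\Lambda})}$ requires the test function to lie in $\mathcal{D}_{(\omega)}(x_0+U)$, which holds automatically since the $\psi$ produced by that argument belongs to $\mathcal{D}_{(\omega)}(x_0+U_1)\subseteq\mathcal{D}_{(\omega)}(x_0+U)$.

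The main — indeed the only — obstacle is the bookkeeping involved in certifying that every cone, neighborhood, cutoff, and radius appearing in the earlier proofs is fixed before $v$ and $q$ are ever invoked, so that a common choice is admissible across the whole family $(v_j,q_j)_{j\in J}$; no new inequality is required.
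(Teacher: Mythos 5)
Your proof is correct and is exactly what the paper intends: the paper gives no separate argument for Theorem \ref{mainsup}, stating only that it follows ``as a corollary of the proofs of Theorem \ref{main} and Lemma \ref{mainlemma},'' which is precisely your observation that the cones, neighborhoods, and cut-offs in those proofs are chosen independently of $(v_j,q_j)$ while the weights and exponents enter only through $j$-dependent constants. The inf-type identity is, as you say, an immediate intersection of the single-index equality (\ref{eqWF1}).
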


We end this section we a discrete characterization of    $WF_{\{\omega\}}$ and $WF_{(\omega)}$. If $f\in \mathcal{D}_{(\omega)}'(\Omega)$, these wave front sets can be defined as 
$$
WF_{\{\omega\}}(f)=WF^{\textnormal{inf}}_{(\mathcal{F} L^{\infty}_{v_\lambda})}(f)  \quad \mbox{and} \quad WF_{(\omega)}(f)=WF^{\textnormal{sup}}_{(\mathcal{F} L^{\infty}_{v_\lambda})}(f),
$$
with the weights $v_{\lambda}(\xi)=e^{\lambda\omega(\xi)}$, $\lambda>0$. If  $ (x_0, \xi_0)\notin WF_{\{\omega\}}(f)$ (resp. $ (x_0, \xi_0)\notin WF_{(\omega)}(f)$), one says that $f$ is $\{\omega\}$-microlocally regular at  $(x_0, \xi_0)$ (resp. $(\omega)$-microlocally regular). We point out that when $f\in\mathcal{D}'_{\{\omega\}}(\Omega)$, the ultradistribution is $\{\omega\}$-microlocally regular ($(\omega)$-microlocally regular) at $(x_0,\xi_0)$ if and only if there are an conic open neighborhood $\Gamma$ of $\xi_0$ and $\varphi\in \mathcal{D}_{\{\omega\}}(\Omega)$ with $\varphi(x_0)\neq 0$ such that for some $\lambda > 0$ (for every $\lambda > 0$)
\begin{equation}
\label{eq4.5}  \sup_{\xi \in \Gamma} |\widehat{\varphi f}(\xi)| e^{\lambda \omega(\xi)} < \infty,  
\end{equation}
so that our definition agrees with the one used in \cite{f-g-j,Pilipovictwo,Rodino}. Summing up, we obtain the ensuing result, a corollary of (\ref{eqWF2}).

\begin{corollary}
\label{corollarywfomega}
Let $f \in \mathcal{D}_{(\omega)}'(\Omega)$ and let $\Lambda$ be a lattice in $\mathbb{R}^{d}$. Suppose that $U$ is an  open convex neighborhood of the origin such that $U\cap\Lambda^{*}=\{0\}$ and $x_0+U\subseteq\Omega$. Then, $f$ is $\{\omega\}$-microlocally regular $($$(\omega)$-microlocally regular$)$ at the point $(x_0,\xi_0)\in\Omega\times ( \mathbb{R}^d \setminus \{ 0 \})$ if and only if
there are an open conic neighborhood $\Gamma$ of $\xi_0$ and $\varphi \in \mathcal{D}_{(\omega)}(x_0+U)$ with $\varphi(x_0)\neq0$ such that for some $\lambda > 0$ $($for every $\lambda >0$$)$
\begin{equation}
\label{eq4.6} 
\sup_{\mu \in \Gamma\cap \Lambda} |\widehat{\varphi f}(\mu)| e^{\lambda \omega(\mu)} < \infty. 
\end{equation}
Moreover, if $f\in\mathcal{D}'_{\{\omega\}}(\Omega)$ the regularity assumption on the $\varphi$  witnessing $(\ref{eq4.6})$  may be relaxed to $\varphi \in \mathcal{D}_{\{\omega\}}(x_0+U)$.
\end{corollary}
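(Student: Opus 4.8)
The plan is to deduce Corollary~\ref{corollarywfomega} from the sup/inf version of the main theorem, Theorem~\ref{mainsup} (that is, from the equalities~(\ref{eqWF2})), by specializing it to the family of exponential weights and then unwinding the definitions; only the last assertion will require a little extra input.

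First I would record that, for every $\lambda>0$, the weight $v_\lambda(\xi)=e^{\lambda\omega(\xi)}$ lies in $\mathscr{M}_{(\omega)}$: the subadditivity $(\alpha)$ gives at once $v_\lambda(\xi_1+\xi_2)\le v_\lambda(\xi_1)e^{\lambda\omega(\xi_2)}$. Applying Theorem~\ref{mainsup} with index set $J=(0,\infty)$, with $q_\lambda=\infty$ and $v_\lambda=e^{\lambda\omega}$, yields
\begin{equation*}
WF^{\textnormal{inf}}_{(\mathcal{F} L^{\infty}_{v_\lambda})}(f)=WF^{\textnormal{inf}}_{(\mathcal{F} l^{\infty}_{v_\lambda,\Lambda})}(f),\qquad WF^{\textnormal{sup}}_{(\mathcal{F} L^{\infty}_{v_\lambda})}(f)=WF^{\textnormal{sup}}_{(\mathcal{F} l^{\infty}_{v_\lambda,\Lambda})}(f).
\end{equation*}
By the definitions recalled just before the corollary, the left-hand sides are $WF_{\{\omega\}}(f)$ and $WF_{(\omega)}(f)$. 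It remains to translate the right-hand sides into condition~(\ref{eq4.6}). Since $U$ is convex and $U\cap\Lambda^{*}=\{0\}$, the set $x_0+U$ lies in the interior of a fundamental region of $\Lambda^{*}$, so (exactly as in the proof of Lemma~\ref{mainlemma}) for any $\varphi\in\mathcal{D}_{(\omega)}(x_0+U)$ the Fourier coefficients of the $\Lambda^{*}$-periodization $(\varphi f)_{\mathfrak{p}_{\Lambda^{*}}}$ are $(\widehat{\varphi f}(\mu))_{\mu\in\Lambda}$; combined with the defining seminorm~(\ref{eqsFl}) this gives $|(\varphi f)_{\mathfrak{p}_{\Lambda^{*}}}|_{\mathcal{F} l^{\infty,\Gamma}_{v_\lambda,\Lambda}}=\sup_{\mu\in\Gamma\cap\Lambda}|\widehat{\varphi f}(\mu)|e^{\lambda\omega(\mu)}$. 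Hence $(x_0,\xi_0)\notin WF^{\textnormal{inf}}_{(\mathcal{F} l^{\infty}_{v_\lambda,\Lambda})}(f)=\bigcap_{\lambda>0}WF_{\mathcal{F} l^{\infty}_{v_\lambda,\Lambda}}(f)$ means precisely that there are $\Gamma$ and $\varphi\in\mathcal{D}_{(\omega)}(x_0+U)$ with $\varphi(x_0)\ne0$ for which~(\ref{eq4.6}) holds for some $\lambda>0$, and $(x_0,\xi_0)\notin WF^{\textnormal{sup}}_{(\mathcal{F} l^{\infty}_{v_\lambda,\Lambda})}(f)$ means the same with one pair $(\Gamma,\varphi)$ making~(\ref{eq4.6}) hold for every $\lambda>0$. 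This is exactly the two stated biconditionals.

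For the last assertion, the inclusion $\mathcal{D}_{(\omega)}(x_0+U)\subseteq\mathcal{D}_{\{\omega\}}(x_0+U)$ trivially allows one to weaken the test function in~(\ref{eq4.6}) from Beurling to Roumieu; the content is the converse. Here, for $f\in\mathcal{D}'_{\{\omega\}}(\Omega)$, I would invoke Remark~\ref{rk4.3}: in condition $(i)$ of Theorem~\ref{main} the requirement on $\varphi$ may be relaxed to $\varphi\in\mathcal{D}_{\{\omega\}}(x_0+U)$, since such a Roumieu test function can always be traded for one in $\mathcal{D}_{(\omega)}(\Omega)$ at the cost of shrinking the conic neighborhood. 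Feeding this relaxed form of Theorem~\ref{main} (equivalently, of~(\ref{eqWF2})) through the unwinding of the previous paragraph shows that a Roumieu $\varphi\in\mathcal{D}_{\{\omega\}}(x_0+U)$ satisfying~(\ref{eq4.6}) for some $\lambda$ (resp. for every $\lambda$) already forces $(x_0,\xi_0)\notin WF_{\{\omega\}}(f)$ (resp. $(x_0,\xi_0)\notin WF_{(\omega)}(f)$).

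I expect the one genuinely delicate ingredient to be this replacement of the test function behind Remark~\ref{rk4.3}. The subtlety is that the weights $v_\lambda=e^{\lambda\omega}$ are $(\omega)$-moderate but \emph{not} $\{\omega\}$-moderate, so one cannot appeal to the $\mathcal{D}_{\{\omega\}}$-valued strengthening of Lemma~\ref{mainlemma}; instead one keeps the given Roumieu $\varphi$ fixed, picks $\psi\in\mathcal{D}_{(\omega)}$ supported in $\{\varphi\ne0\}$ with $\psi(x_0)\ne0$, writes $\psi f=(\psi\varphi^{-1})\,(\varphi f)$ with $\psi\varphi^{-1}\in\mathcal{D}_{\{\omega\}}(x_0+U)$ by inverse-closedness of $\mathcal{E}_{\{\omega\}}$, passes to $\Lambda^{*}$-periodizations so that $\widehat{\psi f}(\mu)=\sum_{\beta\in\Lambda}\widehat{\psi\varphi^{-1}}(\mu-\beta)\,\widehat{\varphi f}(\beta)$, and splits this sum into the part over $\Gamma\cap\Lambda$ (controlled by the hypothesis) and its complement (controlled by the bound $|\widehat{\varphi f}(\beta)|\le De^{\lambda_0\omega(\beta)}$ valid for $\varphi f\in\mathcal{E}'_{\{\omega\}}$, together with the cone-separation estimate from the proof of Lemma~\ref{mainlemma}). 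Everything else is a mechanical specialization of results already in hand.
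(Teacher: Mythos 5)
Your derivation is exactly the paper's: Corollary~\ref{corollarywfomega} is obtained there as an immediate specialization of Theorem~\ref{mainsup} (i.e.\ of the equalities~(\ref{eqWF2})) to the family $v_\lambda=e^{\lambda\omega}$ with $q=\infty$, using the definitions $WF_{\{\omega\}}=WF^{\textnormal{inf}}_{(\mathcal{F}L^\infty_{v_\lambda})}$ and $WF_{(\omega)}=WF^{\textnormal{sup}}_{(\mathcal{F}L^\infty_{v_\lambda})}$, and the ``moreover'' clause is delegated to Remark~\ref{rk4.3}; the paper offers no more detail than your first three paragraphs, so at that level your proof is correct and identical in structure. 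Your verification that $v_\lambda\in\mathscr{M}_{(\omega)}$ (but not in $\mathscr{M}_{\{\omega\}}$) and your unwinding of the discrete seminorms~(\ref{eqsFl}) are accurate.

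One caveat about your final paragraph, which you present as the genuinely delicate ingredient. The convolution argument you sketch --- $\widehat{\psi f}(\mu)=\sum_{\beta}\widehat{\psi\varphi^{-1}}(\mu-\beta)\widehat{\varphi f}(\beta)$ with $\psi\varphi^{-1}\in\mathcal{D}_{\{\omega\}}$ --- only delivers the Roumieu alternative of~(\ref{eq4.6}). Since $\psi\varphi^{-1}$ is merely Roumieu, one has $|\widehat{\psi\varphi^{-1}}(\zeta)|\le Ce^{-\gamma_0\omega(\zeta)}$ for some fixed $\gamma_0>0$ only, so both the on-cone and off-cone pieces of the sum are bounded by a multiple of $e^{-c\gamma_0\omega(\mu)}$ and no better: the achievable decay for $\widehat{\psi f}$ on $\Gamma_1\cap\Lambda$ is capped at a rate determined by $\gamma_0$. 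This suffices for ``some $\lambda>0$'' (take $\lambda'\le\min(\lambda,c\gamma_0)$), hence for the $\{\omega\}$ statement, but it does not by itself produce the bound for \emph{every} $\lambda>0$ with a single Beurling $\psi$, which is what the $(\omega)$ half of the ``moreover'' requires. The paper itself does not supply an argument here either --- it simply asserts the replacement in Remark~\ref{rk4.3} and in the discussion around~(\ref{eq4.5}), citing agreement with the literature --- so your proposal is not out of step with the paper; but you should not claim that the sketched estimate settles the Beurling case, and if you want a self-contained proof of the ``moreover'' clause in that case you need an additional input beyond the convolution bound.
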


Note that if a weight sequence $(M_p)_{p}$ satisfies $(M.1)$, $(M.2)$, $(M.3)'$, and $(M.4)$, then Theorem \ref{th WF non-quasianalytic} from the introduction turns out to be a particular case of Corollary \ref{corollarywfomega}; we shall however derive the general version of Theorem \ref{th WF non-quasianalytic} in Section \ref{section quasianalytic}, where the quasianalytic case will also be treated. Nonetheless, it is should be mentioned that Corollary \ref{corollarywfomega} already covers important cases such as  $\omega(\xi) = |\xi|^{1/s}$, $s > 1$, which corresponds to $M_p=(p!)^{s}$; thus it provides a discrete characterization of $\{s\}$- and $(s)$-microregularity \cite{Rodino}. Furthermore, Corollary \ref{corollarywfomega} also includes results by Ruzhansky and Turunen on the toroidal $C^{\infty}$-wave front set \cite[Thm. 7.4]{RT}, while Theorem \ref{mainsup} covers all results from \cite{d-m-s,Maksi}. 

\section{Quasianalytic wave front sets}
\label{section quasianalytic}
This last section is devoted to wave front sets defined via weight sequences. 

We consider two sequences $(M_p)_{p \in \mathbb{N}}$ and $(N_p)_{p \in \mathbb{N}}$ of positive real numbers (with $M_0 = N_0 = 1$). The sequence $(M_p)_p$ is assumed to satisfy $(M.1)$, $(M.2)'$, $(M.3)'$, while $(N_p)_p$ satisfies $(M.1)$, $(M.2)'$, and in addition
\begin{itemize}
\item [$(M.5)$]
$p!\subset N_{p}$ in the Roumieu case, \quad or \quad $ p!\prec N_p$ in the Beurling case.
\end{itemize}
The sequence $(N_p)_p$ may be quasianalytic, namely, $\sum_{p=1}^{\infty} N_{p-1}/{N_{p}}=\infty$, and will be used to measure the micro\-regularity of an $M_p$-ultradistribution. The functions $M$ and $N$ stand for the associated functions of $(M_p)_p$ and $(N_p)_p$, respectively. 

We shall follow H\"{o}rmander's approach to quasianalytic wave front sets \cite[Sect. 8.4]{Hormander}, but we slightly modify it to include wave fronts of $M_p$-ultradistributions. Let $f \in {\mathcal{D}^{(M_p)}}'(\Omega)$. We begin with the Roumieu case. The ultradistribution $f$ is said to be $\{N_p\}$-\emph{microlocally regular} at the point $(x_0,\xi_0)\in \Omega\times (\mathbb{R}^d \setminus \{ 0 \})$ if there are an open conic neighborhood $\Gamma$ of $\xi_0$, an open neighborhood $V$ of $x_0$ and a bounded sequence $(f_p)_{p \in \mathbb{N}}$  in ${\mathcal{E}^{(M_p)}}'(\Omega)$, with $f_p= f$ on $V$ for all $p \in \mathbb{N}$, such that for some $A,C > 0$ 
\begin{equation}
\label{eqWFseq}
  \sup_{\xi \in \Gamma} |\widehat{f_p} (\xi)| |\xi|^p  < AC^{p}N_p, \qquad \forall p \in \mathbb{N}.  
 \end{equation}
The wave front set $WF_{\{N_p\}}(f)$ then consists of all those $(x_0,\xi_0)\in \Omega\times (\mathbb{R}^d \setminus \{ 0 \})$ such that $f$ is \emph{not} $\{N_p\}$-microlocally regular at $(x_0,\xi_0)$. For classical Schwartz distributions $f \in {\mathcal{D}}'(\Omega)$, $WF_{\{N_p\}}(f)$ agrees with H\"{o}rmander's notion $WF_L(f)$, where $L_p = N_p^{1/p}$ (and our assumptions on $N_p$ turn out to be the same as those considered in \cite[Sect. 8.4]{Hormander} for $L_p$). In particular, for $N_p= p!$, we obtain the analytic wave front set $WF_A(f):=WF_{\{p!\}}(f)$.

 One defines $(N_p)$-\emph{microregularity} in a similar fashion (namely, by asking that (\ref{eqWFseq}) holds for all $C>0$ and some $A=A_C>0$). The definition of the Beurling wave front set  $WF_{(N_p)}(f)$ should be clear.

We need the notion of an analytic cut-off sequence \cite{Hormander,Petschze2} in order to move further. A sequence 
$(\chi_p)_{p\in \mathbb{N}}$ in $\mathcal{D}^{(M_p)}(\Omega)$ is called an $(M_p)$-analytic cut-off sequence supported in $\Omega$ if
\begin{itemize}
\item[$(a)$] $(\chi_p)_p$ is a bounded sequence in $\mathcal{D}^{(M_p)}(\Omega)$,
\item[$(b)$] $(\exists C > 0)(\forall h > 0)(\exists A_h > 0)$
\[ \|\chi_p^{(\alpha)}\|_{\mathcal{E}^{\{M_p\},h}(L)} \leq A_h(Cp)^{|\alpha|}, \qquad \forall p \in \mathbb{N},\ |\alpha| \leq p,\]
\end{itemize}
where $L \Subset \Omega$ is such that $\operatorname{supp} \chi_p \subseteq L$ for all $p \in \mathbb{N}$. We call $(\chi_p)_p$ an analytic cut-off sequence for $K\Subset\Omega$ if
\begin{itemize}
\item[$(c)$] there exists an open neighborhood $V$ of $K$ such that $\chi_p \equiv 1$ on $V$ for all $p \in \mathbb{N}$.
\end{itemize}
If $K = \{x_0\}$ is a singleton set, we shall simply say that $(\chi_p)_p$  is an analytic cut-off sequence for $x_0$. Likewise, one may also define an $\{M_p\}$-analytic cut-off as a bounded sequence $(\chi_p)_{p\in \mathbb{N}}$ in $\mathcal{D}^{\{M_p\}}(\Omega)$ such that the property $(b)$ is asked to hold just for some $h,A_h>0$.

\begin{lemma}
Let $K \Subset \mathbb{R}^d$. For every open neighborhood $W$ of $K$ there exists an $(M_p)$-analytic cut-off sequence for $K$ supported in $W$. 
\end{lemma}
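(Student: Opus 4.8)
The plan is to construct the cut-off sequence explicitly by convolving a fixed smooth bump with $p$-fold iterated averages over small balls, following the classical construction of analytic cut-offs due to H\"{o}rmander. First I would fix a compact set $K'$ with $K\Subset \operatorname{int}K'\Subset K'\Subset W$ and choose $\delta>0$ so small that the $2\delta$-neighborhood of $K'$ is still contained in $W$. I would also need an auxiliary nonnegative function $\rho\in\mathcal{D}^{(M_p)}(\mathbb{R}^d)$ supported in the unit ball with $\int\rho=1$; such a function exists because $(M_p)_p$ satisfies $(M.1)$, $(M.2)'$, $(M.3)'$, which in particular implies non-quasianalyticity, so $\mathcal{D}^{(M_p)}$ is non-trivial and contains nonnegative bumps. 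For each scale $s>0$ set $\rho_s(x)=s^{-d}\rho(x/s)$.

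Next I would define, for each $p\in\mathbb{N}$, the function
\[
\chi_p = \mathbf{1}_{K''}\ast \rho_{\delta/p}^{\ast p},
\]
where $K''$ is an intermediate compact set chosen so that the $p$ successive convolutions with kernels of radius $\delta/p$ (total displacement at most $\delta$) keep the support inside $W$ and guarantee $\chi_p\equiv 1$ on a fixed neighborhood $V$ of $K$ (this forces $K''$ to contain the $\delta$-neighborhood of $K$ and to be contained in the $(\delta)$-interior of the complement's exterior — one picks $K''$ with $K_\delta\Subset K''\Subset W_{-\delta}$ appropriately). Property $(c)$ then follows because on $V$ the indicator $\mathbf{1}_{K''}$ equals $1$ on a ball of radius $\geq \delta$ around any point, and each averaging kernel has radius $\delta/p$, so after $p$ convolutions the value is still exactly $1$. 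Property $(a)$, boundedness in $\mathcal{D}^{(M_p)}(W)$, follows from the derivative estimates in $(b)$ together with $(M.2)'$, since once one controls $\|\chi_p^{(\alpha)}\|$ for $|\alpha|\le p$ by $A_h(Cp)^{|\alpha|}\le A_h(Cp)^p$ and one separately controls higher derivatives, Stirling's estimate $p^p\le e^p p!$ and the inclusion $p!\subset M_p$ (from $(M.5)$-type considerations, or rather directly from $(M.1)$ with $M_0=1$ — actually here one uses that $\mathcal{D}^{(M_p)}$ is stable, so more care is needed) give the required uniform bound $\|\chi_p\|_{\mathcal{E}^{\{M_p\},h}(L)}\le A_h'$ for every $h>0$.

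The technical heart is property $(b)$: the estimate $\|\chi_p^{(\alpha)}\|_{\mathcal{E}^{\{M_p\},h}(L)}\le A_h(Cp)^{|\alpha|}$ for all $|\alpha|\le p$, uniformly in $p$, with the crucial feature that $C$ is \emph{independent} of $h$ while only $A_h$ depends on $h$. To get this I would distribute the $|\alpha|$ derivatives among the $p$ convolution factors: writing $\partial^\alpha\chi_p$ as a sum over ways of splitting $\alpha=\alpha^{(1)}+\cdots+\alpha^{(p)}$ of $\mathbf{1}_{K''}\ast(\partial^{\alpha^{(1)}}\rho_{\delta/p})\ast\cdots\ast(\partial^{\alpha^{(p)}}\rho_{\delta/p})$, and using $\|\partial^\beta\rho_{\delta/p}\|_{L^1}=(p/\delta)^{|\beta|}\|\partial^\beta\rho\|_{L^1}\le (p/\delta)^{|\beta|}B^{|\beta|+1}M_{|\beta|}$ together with Young's inequality, one bounds $\|\partial^\alpha\chi_p\|_\infty$ by $\operatorname{vol}(K'')$ times a multinomial sum $\sum \binom{\alpha}{\alpha^{(1)},\ldots}\prod (p/\delta)^{|\alpha^{(j)}|}B^{|\alpha^{(j)}|+1}M_{|\alpha^{(j)}|}$; since each $|\alpha^{(j)}|\le |\alpha|\le p$, crude bounds $M_{|\alpha^{(j)}|}\le $ (something with $h$) absorb into $A_h$, while the factor $(p/\delta)^{|\alpha|}$ and the number $p^{|\alpha|}$ of ways to assign (at most) $|\alpha|$ nonzero pieces among $p$ slots combine to give $(Cp)^{|\alpha|}$ with $C=C(\delta,B,d)$ independent of $h$. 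The main obstacle, and the point requiring the most care, is precisely this bookkeeping — ensuring the constant $C$ in front of $p$ does not secretly depend on $h$, which is what makes the sequence an \emph{analytic} cut-off rather than merely a bounded sequence; one must route all the $h$-dependence into the prefactor $A_h$ by using $(M.2)'$ to convert between $M_{|\beta|}$ at different growth rates, and keep the geometric factors $(p/\delta)^{|\alpha|}$ and the combinatorial factor $p^{|\alpha|}$ cleanly separated.
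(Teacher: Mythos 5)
Your construction has a genuine gap: the sequence $\chi_p=\mathbf{1}_{K''}\ast\rho_{\delta/p}^{\ast p}$ does not satisfy properties $(a)$ and $(b)$, because there is no fixed factor in the convolution product available to absorb derivatives at a $p$-independent cost. Every derivative you apply must land on one of the $p$ rescaled kernels $\rho_{\delta/p}$, and each such derivative costs a factor $p/\delta$; distributing a multi-index $\beta$ among the factors always produces the total factor $(p/\delta)^{|\beta|}\prod_j\|\partial^{\beta^{(j)}}\rho\|_{L^1}$, so the bound $\sup_{x,\beta}|\chi_p^{(\alpha+\beta)}(x)|/(h^{|\beta|}M_{|\beta|})\leq A_h(Cp)^{|\alpha|}$ required by $(b)$ (note that the $\mathcal{E}^{\{M_p\},h}(L)$-norm of $\chi_p^{(\alpha)}$ involves \emph{all} further derivatives $\beta$, not only $|\alpha|+|\beta|\leq p$) forces you to use $\|\partial^{\gamma}\rho\|_{L^1}\leq C_{h_0}h_0^{|\gamma|}M_{|\gamma|}$ with $h_0\leq h\delta/p$, and the constant $C_{h\delta/p}$ blows up with $p$. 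The failure is not an artifact of crude estimates: already $\|\nabla\chi_p\|_{L^\infty}$ is unbounded in $p$ (in one dimension $\chi_p'$ is the density of a sum of $p$ i.i.d.\ variables of scale $\delta/p$, which concentrates at scale $\delta/\sqrt{p}$ and so has sup of order $\sqrt{p}/\delta$), hence $(\chi_p)_p$ is not even bounded in $C^1$, let alone in $\mathcal{D}^{(M_p)}(\Omega)$ as $(a)$ demands. Your own treatment of $(a)$ ("one separately controls higher derivatives\dots more care is needed") is exactly where this breaks down, and the appeal to $(M.5)$ is out of place since that is a hypothesis on $(N_p)$, not $(M_p)$.

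The fix is the paper's (short) argument: take the \emph{classical} H\"ormander sequence $(\chi_p)_p$ of \cite[Thm.~1.4.2]{Hormander} (built from indicator-function kernels, merely $C^\infty$, with $\|\chi_p^{(\alpha)}\|_{L^\infty}\leq(Cp)^{|\alpha|}$ for $|\alpha|\leq p$) and convolve each term with one \emph{fixed} $\varphi\in\mathcal{D}^{(M_p)}(\mathbb{R}^d)$ of small support with $\int\varphi=1$. Then for $|\alpha|\leq p$ and arbitrary $\beta$ one routes $\alpha$ onto $\chi_p$ and $\beta$ onto $\varphi$, getting $\|\partial^{\alpha+\beta}(\chi_p\ast\varphi)\|_{L^\infty}\leq\|\partial^\alpha\chi_p\|_{L^1}\,\|\partial^\beta\varphi\|_{L^\infty}\leq A_h(Cp)^{|\alpha|}h^{|\beta|}M_{|\beta|}$, with all $h$-dependence in $A_h$; taking $\alpha=0$ gives $(a)$, and $(c)$ and the support condition survive because $\operatorname{supp}\varphi$ is small. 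The single unrescaled factor $\varphi$ is precisely the reservoir your construction is missing.
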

\begin{proof}
 Set $d(K, \partial W) = 4\varepsilon > 0$. By \cite[Thm. 1.4.2]{Hormander} there is a sequence $(\chi_p)_p$ in $\mathcal{D}(\mathbb{R}^d)$ and $C>0$ such that
 \[ \|\chi_p^{(\alpha)}\|_{L^\infty} \leq (Cp)^{|\alpha|}, \qquad \forall p \in \mathbb{N}, |\alpha| \leq p,\]
 $\chi_p \equiv 1$ on $K+ \bar{B}(0,2\varepsilon)$ and $\operatorname{supp} \chi_p \subseteq  K+ \bar{B}(0,3\varepsilon)$ for all $p \in \mathbb{N}$. Let $\varphi \in \mathcal{D}^{(M_p)}(\mathbb{R}^d)$ with $\operatorname{supp} \varphi \subseteq B(0, \varepsilon)$ and $\int_{\mathbb{R}^d}\varphi(x) \mathrm{d}x = 1$. It is clear that $(\chi_p \ast \varphi)_p$  satisfies all requirements.
\end{proof}

 We can now state the main result of this section.
\begin{theorem}\label{mainqa}
 Let $\Lambda$ be a lattice in $\mathbb{R}^{d}$, $f \in {\mathcal{D}^{(M_p)}}'(\Omega)$, and $(x_0, \xi_0) \in \Omega \times (\mathbb{R}^d \backslash \{ 0 \})$. Suppose that $U$ is an  open convex neighborhood of the origin such that $U\cap\Lambda^{*}=\{0\}$ and $x_0+U\subset \Omega$. Then, the following statements are equivalent:\begin{itemize}
\item[$(i)$]  There are an open conic neighborhood $\Gamma$ of $\xi_0$, an open neighborhood $V \subseteq x_0+U$ of $x_0$, and a bounded sequence $(f_p)_{p \in \mathbb{N}}$  in ${\mathcal{E}^{(M_p)}}'( x_0 + U)$, with $f_p= f$ on $V$ for all $p \in \mathbb{N}$, such that for some $A,C > 0$ $($for every $C>0$ there is $A>0$$)$ 
 
\[  \sup_{\mu \in \Gamma \cap \Lambda} |\widehat{f_p} (\mu)| |\mu|^p  \leq AC^{p}N_p, \qquad \forall p \in \mathbb{N}.  \]
\item[$(ii)$] There are an open conic neighborhood $\Gamma$ of $\xi_0$ and an $(M_p)$-analytic cut-off sequence $(\chi_p)_p$ for $x_0$  supported in $x_0 + U$ such that for some $A,C > 0$ $($for every $C>0$ there is $A>0$$)$ 
\[  \sup_{\mu \in \Gamma \cap \Lambda} |\widehat{\chi_p f} (\mu)| |\mu|^p  \leq AC^{p}N_p, \qquad \forall p \in \mathbb{N}.  \]
\item[$(iii)$] There are an open conic neighborhood $\Gamma$ of $\xi_0$ and an $(M_p)$-analytic cut-off sequence $(\chi_p)_p$  for $x_0$ such that for some $A,C > 0$ $($for every $C>0$ there is $A>0$$)$  
\[  \sup_{\xi \in \Gamma} |\widehat{\chi_p f} ( \xi)| |\xi|^p  \leq AC^{p}N_p, \qquad \forall p \in \mathbb{N}.  \]
\item[$(iv)$] $f$ is $\{N_p\}$-microlocally regular $($$(N_p)$-microlocally regular$)$ at $(x_0,\xi_0)$.
\end{itemize}
\smallskip
\smallskip

\noindent If additionally $(N_p)_p$ satisfies $(M.3)'$,
then these statements are also equivalent to any of the following two conditions:
\begin{itemize}
\item [$(v)$] There are an open conic neighborhood $\Gamma$ of $\xi_0$ and $\varphi \in \mathcal{D}^{(M_p)}(x_0+U)$ with $\varphi\equiv1$ in a neighborhood of $x_0$  such that for some $r > 0$ $($for every $r >0$$)$
$$
\sup_{\mu \in \Gamma\cap \Lambda} |\widehat{\varphi f}(\mu)| e^{N(r\mu)} < \infty. 
$$
\item [$(vi)$] There are an open conic neighborhood $\Gamma$ of $\xi_0$ and $\varphi \in \mathcal{D}^{(M_p)}(x_0+U)$ with $\varphi\equiv1$ in a neighborhood of $x_0$  such that for some $r > 0$ $($for every $r >0$$)$
$$
\sup_{\xi \in \Gamma} |\widehat{\varphi f}(\xi)| e^{N(r\xi)} < \infty. 
$$
\end{itemize}
\end{theorem}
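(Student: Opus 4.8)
The plan is to prove the cycle $(iv)\Rightarrow(iii)\Rightarrow(ii)\Rightarrow(i)\Rightarrow(iv)$, together with the companion loop $(i)\Rightarrow(ii)\Rightarrow(iii)\Rightarrow(iv)$ in which every arrow is immediate or a mild variant of one already done, and then --- under the extra hypothesis that $(N_p)_p$ also satisfies $(M.3)'$ --- to splice in $(iii)\Rightarrow(vi)\Rightarrow(v)\Rightarrow(i)$ and $(vi)\Rightarrow(iv)$. Two mechanisms carry the argument. The first is H\"ormander's technique for quasianalytic wave front sets \cite[Sect.~8.4]{Hormander}, transplanted to $M_p$-ultradistributions by replacing single cut-off functions with $(M_p)$-analytic cut-off sequences; this handles the \emph{continuous} equivalence $(iii)\Leftrightarrow(iv)$ and the local passage between a bounded sequence $(f_p)_p$ and a cut-off sequence $(\chi_pf)_p$. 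The second, which is the new ingredient, is the periodization calculus of Section~\ref{periodic ultradistributions} together with the sampling device from the proof of Lemma~\ref{mainlemma}; this is what links the \emph{discrete} conditions (supremum over $\Gamma\cap\Lambda$) to the \emph{continuous} ones.

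For the continuous core, $(iii)\Rightarrow(iv)$ is obtained by taking $f_p:=\chi_pf$, which is bounded in ${\mathcal{E}^{(M_p)}}'(\Omega)$ and equals $f$ on the ($p$-independent) neighborhood where $\chi_p\equiv1$. For $(iv)\Rightarrow(iii)$ I would choose, via the preceding lemma, an $(M_p)$-analytic cut-off sequence $(\chi_p)_p$ for $x_0$ supported in $V$, so that $\chi_pf=\chi_pf_p$ and $\widehat{\chi_pf}=\widehat{\chi_p}\ast\widehat{f_p}$, and estimate $|\xi|^p|\widehat{\chi_pf}(\xi)|$ on a subcone $\Gamma_1$ by splitting the convolution at $\partial\Gamma$: on $\{\eta\in\Gamma\}$ the factor $(1+|\eta|)^p|\widehat{f_p}(\eta)|\le AC^pN_p$ (the bound near $\eta=0$ coming from boundedness of $(f_p)_p$) absorbs a power of $|\xi|$, and on $\{\eta\notin\Gamma\}$ one uses the cone inequality $|\xi-\eta|\ge c\max(|\xi|,|\eta|)$ together with the uniform bound $|\widehat{\chi_p}(\zeta)|\le C_0(Cp)^p|\zeta|^{-p}e^{-M(|\zeta|/h)}$, valid for every $h>0$, which follows from property $(b)$ (the first $p$ derivatives give $(Cp)^p|\zeta|^{-p}$, the higher ones the factor $e^{-M(|\zeta|/h)}$). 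The algebraic point is that $(Cp)^p|\xi|^p|\xi-\eta|^{-p}\le(C'p)^p\le A'(C'')^pN_p$ by $(M.5)$ --- only one power of $N_p$ --- while the factor $e^{-M(\cdot/h)}$ with $h$ small renders the $\eta$-integral uniformly convergent (it beats the at-most-$e^{M(h_1|\eta|)}$ growth of $\widehat{f_p}$) and supplies the decay in $\xi$. Convolving instead against a fixed $\psi\in\mathcal{D}^{(M_p)}(x_0+U)$ with $\psi\equiv1$ near $x_0$ and $\operatorname{supp}\psi$ inside the set where $\chi_p\equiv1$, the same estimate lets one replace a general analytic cut-off sequence by one supported in $x_0+U$; cutting by yet another such sequence supported in $V$ gives $(i)\Leftrightarrow(ii)$, and $(ii)\Rightarrow(i)$ is trivial ($f_p:=\chi_pf$).

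The hard direction, and the main obstacle, is discrete $\Rightarrow$ continuous, i.e. $(ii)\Rightarrow(iii)$ (equivalently $(i)\Rightarrow(iv)$). Mimicking the proof of Theorem~\ref{main}, I would first upgrade $(ii)$ to the uniform statement, the analogue of Lemma~\ref{mainlemma}$(i)'$: after shrinking $\Gamma$ to $\Gamma_1$ and $U$ to a small $U_1$, for every bounded family of $(M_p)$-analytic cut-off sequences $(\chi_p')_p$ supported in $x_0+U_1$ one has $\sup_{\mu\in\Gamma_1\cap\Lambda}|\widehat{\chi_p'f}(\mu)|\,|\mu|^p\le A'(C')^pN_p$ for all $p$. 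Its proof copies that of Lemma~\ref{mainlemma}: since $x_0+U$ sits inside a fundamental region $I_{\Lambda^\ast}$ of $\Lambda^\ast$, the Fourier coefficients of the $\Lambda^\ast$-periodization of $\chi_pf$ are its samples $(\widehat{\chi_pf}(\mu))_{\mu\in\Lambda}$; writing $\chi_p'f=\chi_p'\cdot\chi_pf$ one convolves the coefficient sequences and splits into the $\Gamma\cap\Lambda$ and non-$\Gamma\cap\Lambda$ parts, controlling the latter uniformly in $p$ by the cone inequality, the decay of $\widehat{\chi_p'}$, and the ($p$-uniform) at-most-$e^{M(h_0\mu)}$ growth of the samples from Remark~\ref{rFSu} --- this is where $(M.2)'$ and $(M.5)$ keep all constants of the form $C^pN_p$. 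Then, fixing a fundamental region $I_\Lambda\ni0$ of $\Lambda$ and writing $\xi=\mu+t$ with $\mu\in\Gamma_1\cap\Lambda$, $t\in I_\Lambda$, I would use $\widehat{\chi_pf}(\xi)=\widehat{(e_{-t}\chi_p)f}(\mu)$ and the fact that $\{(e_{-t}\chi_p)_p:t\in I_\Lambda\}$ is a bounded family of $(M_p)$-analytic cut-off sequences supported in $x_0+U$ --- here one needs $(M.3)'$, which makes $\{e_{-t}:t\in I_\Lambda\}$ a bounded set of multipliers and makes the extra factor $(2\pi|t|)^{|\alpha|}\le(Cp)^{|\alpha|}$ harmless --- and then take the supremum over $t\in I_\Lambda$; for $|\xi|$ bounded the estimate is trivial from boundedness of $(\chi_pf)_p$. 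The reverse $(iii)\Rightarrow(ii)$ is the trivial restriction $\xi\mapsto\mu\in\Lambda$, after replacing the cut-off sequence of $(iii)$ by one supported in $x_0+U$ as above. Keeping every bound uniform in the running parameter $p$ while threading it through the periodization of Section~\ref{periodic ultradistributions} is the delicate part.

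Finally, assuming $(M.3)'$ for $(N_p)_p$: given $(iii)$ with cut-off sequence $(\chi_p)_p$, I would pick $\varphi\in\mathcal{D}^{(M_p)}(x_0+U)$ with $\varphi\equiv1$ near $x_0$ and $\operatorname{supp}\varphi$ inside the ($p$-independent) set where $\chi_p\equiv1$, so that $\widehat{\varphi f}=\widehat{\varphi}\ast\widehat{\chi_pf}$ for \emph{every} $p$; estimating $|\widehat{\varphi f}(\xi)|$ by the same split --- now with the fast decay $|\widehat{\varphi}(\zeta)|\le C_he^{-M(|\zeta|/h)}$ (all $h>0$) in the role of the earlier factor --- and then minimizing over $p$ gives $|\widehat{\varphi f}(\xi)|\le A\inf_p(C/|\xi|)^pN_p=A\,e^{-N(|\xi|/C)}$ on a subcone, which is $(vi)$; non-quasianalyticity of $(N_p)_p$ is exactly what makes such a single-cut-off estimate attainable. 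Conversely $(vi)\Rightarrow(iv)$ already with the constant sequence $f_p:=\varphi f$, reading off $|\widehat{\varphi f}(\xi)|\,|\xi|^p\le C\,r^{-p}N_p$ from $|\widehat{\varphi f}(\xi)|\le Ce^{-N(r\xi)}$ and $e^{-N(t)}\le N_pt^{-p}$; $(vi)\Rightarrow(v)$ is restriction to $\Lambda$; and $(v)\Rightarrow(i)$ is again $f_p:=\varphi f$ with $e^{-N(r\mu)}\le N_p(r|\mu|)^{-p}$. Throughout, the Beurling versions are obtained by switching ``for some $C$/$r$'' to ``for every $C$/$r$''.
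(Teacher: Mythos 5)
Your handling of the equivalences $(i)$--$(iv)$ follows essentially the same route as the paper: the discrete-to-continuous passage is carried out, exactly as in Lemma~\ref{lemmaqa}, by placing $x_0+U$ inside a fundamental region of $\Lambda^{*}$ so that products of localizations correspond to discrete convolutions of the sample sequences, splitting that convolution at the cone boundary, and then recovering the continuous estimate through the sampling device $\xi=\mu+t$ with the bounded family $\{e_{-t}:t\in I_{\Lambda}\}$; the continuous steps are H\"ormander's convolution argument adapted to cut-off sequences via the bound of Lemma~\ref{bounded}, with $(M.5)$ converting $(Cp)^{p}$ into $AC'^{p}N_p$. One slip in this part: to pass to a cut-off sequence supported in $x_0+U$ (and likewise from $(iv)$ to $(i)$) you multiply by a \emph{fixed} $\psi\in\mathcal{D}^{(M_p)}$ with $\psi\equiv1$ near $x_0$; since $\psi\chi_p=\psi$, this produces a single cut-off, and the off-cone part of $\widehat{\psi}\ast\widehat{\chi_pf}$ then only yields $|\zeta|^{p}|\widehat{\psi}(\zeta)|\leq h^{p}M_p$, which is not $O(C^{p}N_p)$ unless $M_p\subset N_p$. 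The repair is immediate — multiply by another cut-off \emph{sequence} $(\kappa_p)_p$ supported in $x_0+U$, for which $|\zeta|^{p}|\widehat{\kappa_p}(\zeta)|\leq C_h(Cp)^{p}e^{-M(|\zeta|/h)}$ — and you clearly have this tool at hand, so I regard it as presentational.

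The genuine gap is in your derivation of $(vi)$, and it is the same phenomenon with no easy repair, because $(vi)$ forces a \emph{single} test function. For $\xi\in\Gamma_1$ the contribution of the region $|\xi-\eta|>c|\xi|$ to $|\xi|^{p}|\widehat{\varphi f}(\xi)|$ is, after using the uniform bound $|\widehat{\chi_pf}(\eta)|\leq De^{M(|\eta|/h_0)}$ and $|\eta|\leq(1+c^{-1})|\xi-\eta|$, controlled by $c^{-p}\int|\zeta|^{p}e^{M(R|\zeta|)}|\widehat{\varphi}(\zeta)|\,\mathrm{d}\zeta$. To extract the required factor $C^{p}N_p$ one must write $|\zeta|^{p}\leq s^{-p}N_pe^{N(s|\zeta|)}$, and the integral then converges only if $\widehat{\varphi}$ decays like $e^{-N(s|\zeta|)-M(R|\zeta|)}$. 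A generic $\varphi\in\mathcal{D}^{(M_p)}$ supplies only $e^{-M(|\zeta|/h)}$, which cannot absorb $e^{N(s|\zeta|)}$ when $N$ grows faster than $M$ (take $M_p=(p!)^{3}$ and $N_p=(p!)^{2}$, so $M(t)\asymp t^{1/3}$ while $N(t)\asymp t^{1/2}$, and note $(N_p)_p$ does satisfy $(M.3)'$ here); using instead $|\zeta|^{p}|\widehat{\varphi}(\zeta)|\leq h^{p}M_p$ yields $M_p$ rather than $N_p$. This is precisely where the paper introduces the auxiliary sequence $Q_p=\min_{0\leq q\leq p}M_qN_{p-q}$, whose associated function is $Q=M+N$ by Roumieu's lemma \cite[Lem.\ 3.5]{Komatsu}, and chooses $\varphi\in\mathcal{D}^{(Q_p)}(V)\subseteq\mathcal{D}^{(M_p)}(V)$, so that $|\widehat{\varphi}(\zeta)|\lesssim e^{-M(R'|\zeta|)-N(R'|\zeta|)}$ closes the estimate. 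The additional hypothesis that $(N_p)_p$ satisfies $(M.3)'$ — which your argument never actually invokes, a telltale sign that something is missing — is needed exactly to guarantee that $(Q_p)_p$ is non-quasianalytic, i.e.\ that $\mathcal{D}^{(Q_p)}\neq\{0\}$. Your remaining arrows $(vi)\Rightarrow(v)\Rightarrow(i)$ and $(vi)\Rightarrow(iv)$, via $f_p:=\varphi f$ and $e^{-N(r\mu)}\leq N_p(r|\mu|)^{-p}$, are correct.
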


Observe that both Theorem \ref{th WF non-quasianalytic} and Theorem \ref{th WF analytic} are contained in Theorem \ref{mainqa}. The rest of this section is dedicated to give a proof of Theorem \ref{mainqa}. We divide its proof into several intermediate lemmas.
\begin{lemma}\label{bounded}
Let $(\chi_p)_p$ be an $(M_p)$-analytic cut-off sequence and $B$ a bounded subset of $\mathcal{E}^{(p!)}(\mathbb{R}^d)$. Then,
\begin{itemize}
\item[\mbox{}] $(\exists C > 0)(\forall h > 0)(\exists C_h > 0)$
\[ \sup_{\psi \in B}\sup_{\xi \in \mathbb{R}^d} |\widehat{ \chi_p\psi}(\xi)| |\xi|^{p} e^{M(|\xi|/h)} \leq C_h (Cp)^p, \qquad \forall  p \in \mathbb{N}.\]
\end{itemize}
\end{lemma}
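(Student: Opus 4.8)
The goal is to bound $|\widehat{\chi_p \psi}(\xi)|$, uniformly for $\psi$ in a bounded set of $\mathcal{E}^{(p!)}(\mathbb{R}^d)$, by a quantity of the form $C_h (Cp)^p |\xi|^{-p} e^{-M(|\xi|/h)}$. The natural tool is the standard estimate that controls the Fourier transform of a compactly supported function by integrating against derivatives: for any multi-index $\alpha$ with $|\alpha| = k$,
\[
(2\pi |\xi|)^{k} |\widehat{\chi_p \psi}(\xi)| \leq \sqrt{d}^{\,k} \sum_{|\alpha| = k} \| (\chi_p \psi)^{(\alpha)} \|_{L^1} \le C' \cdot C''^{k} \sup_{|\alpha| = k} \| (\chi_p \psi)^{(\alpha)} \|_{L^\infty},
\]
where the $L^1$–$L^\infty$ passage costs only the (fixed) volume of the common support $L$ of the $\chi_p$. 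So everything reduces to estimating the sup-norms of derivatives of the product $\chi_p \psi$.

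First I would apply the Leibniz rule: $(\chi_p \psi)^{(\alpha)} = \sum_{\beta \le \alpha} \binom{\alpha}{\beta} \chi_p^{(\beta)} \psi^{(\alpha - \beta)}$. For the factor $\chi_p^{(\beta)}$ I use property $(b)$ of an analytic cut-off sequence, but one must be careful: $(b)$ only gives the bound $(Cp)^{|\beta|}$ for $|\beta| \le p$, and separately the $\mathcal{E}^{\{M_p\},h}$-boundedness $(a)$ gives $\|\chi_p^{(\beta)}\|_{L^\infty} \le A_h' h^{|\beta|} M_{|\beta|}$ for all $\beta$. For the factor $\psi^{(\alpha-\beta)}$ I use that $B$ is bounded in $\mathcal{E}^{(p!)}(\mathbb{R}^d)$, so for every $\ell > 0$ there is $C_\ell$ with $\|\psi^{(\gamma)}\|_{L^\infty} \le C_\ell \ell^{|\gamma|} |\gamma|!$ uniformly over $\psi \in B$ and over any fixed compact set. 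The strategy is then to split the derivative estimate into two regimes according to whether $k = |\alpha|$ is at most $p$ or larger than $p$, and correspondingly to decide which bound to use on $\chi_p^{(\beta)}$.

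The key computational step is to obtain, for a fixed (small) $h$, a bound of the shape $\|(\chi_p\psi)^{(\alpha)}\|_{L^\infty} \le C_h'' B^{|\alpha|} \max\{(Cp)^{|\alpha|}, |\alpha|!\, h^{|\alpha|} M_{|\alpha|}/?\}$ — more precisely, one wants for every $h>0$ an $A_h$ such that $\|(\chi_p\psi)^{(\alpha)}\|_{L^\infty} \le A_h (Cp)^{|\alpha|}$ when $|\alpha|\le p$, and a bound involving $h^{|\alpha|}M_{|\alpha|}$ (times a geometric factor) when $|\alpha| > p$; here the analyticity of $\psi$ (i.e. the factor $|\gamma|! \le |\alpha|!$, together with Stirling $|\alpha|! \le C^{|\alpha|} |\alpha|^{|\alpha|}$) is what keeps the high-order part under control, while $(M.2)'$ lets one absorb the shifts $M_{|\beta|} \le A H^{|\alpha|} M_{|\alpha|}$. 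Feeding the first regime into the Fourier estimate and taking $k = p$ (the largest allowed value) produces the factor $(Cp)^p |\xi|^{-p}$; feeding the second regime and optimizing over $k > p$ produces the factor $e^{-M(|\xi|/h)}$ by the very definition $e^{M(t)} = \sup_k t^k/M_k$. Combining, one gets $|\widehat{\chi_p\psi}(\xi)| \le C_h (Cp)^p |\xi|^{-p} e^{-M(|\xi|/h)}$ for $|\xi|$ large, and for $|\xi|$ bounded one uses the trivial estimate $|\widehat{\chi_p\psi}(\xi)| \le \|\chi_p\psi\|_{L^1}$, which is $O((Cp)^0)$, enlarging $C_h$ if necessary.

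The main obstacle I anticipate is the bookkeeping in the two-regime split: one must arrange the constants so that the quantifier structure ``$(\exists C)(\forall h)(\exists C_h)$'' comes out correctly — in particular $C$ (coming from $(b)$) must be \emph{independent of $h$}, while all $h$-dependence is swept into $C_h$ and into the $e^{-M(|\xi|/h)}$ factor. A secondary subtlety is that property $(b)$ is only available for orders $|\alpha| \le p$, so when we want the clean $(Cp)^p$ factor we should take exactly $k=p$ derivatives to kill powers of $|\xi|$, and use the $M_p$-boundedness (property $(a)$) for the additional derivatives needed to generate the $e^{-M(|\xi|/h)}$ decay; reconciling the two in the Leibniz sum, where $\beta$ can range over all of $[0,\alpha]$, requires splitting the sum at $|\beta| = p$ as well. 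None of this is deep, but it is where an error would most likely creep in.
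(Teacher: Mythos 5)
Your plan is correct and is essentially the paper's own argument: the paper packages your Leibniz-rule bookkeeping into the single intermediate estimate $\sup_{\psi\in B}\|(\chi_p\psi)^{(\alpha)}\|_{\mathcal{E}^{\{M_p\},h}(K)}\leq C_h(Cp)^{|\alpha|}$ for $|\alpha|\leq p$ (stated as a ``straightforward computation''), then takes $p+q$ derivatives in the Fourier estimate and infimizes over $q$ to produce $e^{-M(|\xi|/h)}$, exactly as you describe. The only cosmetic differences are that the infimum over $q\geq 0$ already handles small $|\xi|$ (no separate case needed), and that $(M.1)$, via $M_jM_k\leq M_{j+k}$, rather than $(M.2)'$, is what combines the shifted $M$-factors.
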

\begin{proof}
Let $K \Subset \mathbb{R}^d$ be such that $\operatorname{supp} \chi_p \subseteq K$ for all $p \in \mathbb{N}$. A straightforward computation yields
\begin{itemize}
\item[\mbox{}] $(\exists C > 0)(\forall h > 0)(\exists C_h > 0)$
\[ \sup_{\psi \in B} \|(\chi_p\psi)^{(\alpha)}\|_{\mathcal{E}^{(M_p),h}(K)} \leq C_h (Cp)^{|\alpha|}, \qquad \forall p \in \mathbb{N}, |\alpha| \leq p.\]
\end{itemize}
Hence for all $p, q \in \mathbb{N}$, $\psi \in B$ and $h > 0$ we have that
\begin{align*}
|\widehat{ \chi_p\psi}(\xi)| |\xi|^{p+q} &\leq d^{\frac{p+q}{2}} |K| \sup_{|\alpha| = p} \sup_{|\beta|=q} \sup_{x \in K}|(\chi_p\psi)^{(\alpha+\beta)}(x)| \\
 &\leq |K| C_{h/\sqrt{d}}(\sqrt{d}Cp)^p h^qM_q,
 \end{align*}
 where $|K|$ is the Lebesgue measure of $K$. Consequently,
 \[ |\widehat{ \chi_p\psi}(\xi)| |\xi|^{p} \leq |K| C_{h/\sqrt{d}}(\sqrt{d}Cp)^p \inf_{q \in \mathbb{N}} \frac{h^qM_q}{|\xi|^q} = |K|  C_{h/\sqrt{d}}(\sqrt{d}Cp)^pe^{-M(|\xi|/h)}.\]
\end{proof}
\begin{lemma}\label{lemmaqa} 
Condition $(ii)$ from Theorem \ref{mainqa} implies that there are an open conic neighborhood $\Gamma_1$ of $\xi_0$, an open neighborhood $U_1 \subseteq U$ of the origin, and an $(M_p)$-analytic cut-off sequence $(\kappa_p)_p$ for $x_0$ supported in $x_0 + U_1$ such that for every bounded set $B$ in $\mathcal{E}^{(p!)}(\mathbb{R}^d)$ there are $A,C > 0$ $($for every $C>0$ there is $A>0$$)$  
for which
\[  \sup_{\psi \in B}\sup_{\mu \in \Gamma_1 \cap \Lambda}  |\widehat{\kappa_p \psi f} (\mu)| |\mu|^p  \leq A C^{p}N_p, \qquad \forall p \in \mathbb{N}.  \]
\end{lemma}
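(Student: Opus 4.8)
The plan is to derive Lemma~\ref{lemmaqa} from condition $(ii)$ of Theorem~\ref{mainqa} by a ``shrink-the-cone, shrink-the-neighborhood, absorb-the-extra-cutoff'' argument that parallels the proof of Lemma~\ref{mainlemma}, but now carried out uniformly in the sequence index $p$. Starting from an $(M_p)$-analytic cut-off sequence $(\chi_p)_p$ for $x_0$ supported in $x_0+U$ witnessing $(ii)$ on a conic neighborhood $\Gamma$, I would first pick $\Gamma_1$ with $\overline{\Gamma}_1\subseteq\Gamma\cup\{0\}$ and a constant $m\in\mathbb Z_+$ exactly as in Lemma~\ref{mainlemma}, so that $\xi\in\Gamma_1$, $y\notin\Gamma$ force $|\xi-y|\geq m^{-1}\max(|\xi|,|y|)$. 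Then I would choose $U_1\subseteq U$ with $\overline{U}_1\Subset U$ and, using the cut-off lemma preceding this statement, an $(M_p)$-analytic cut-off sequence $(\kappa_p)_p$ for $x_0$ supported in $x_0+U_1$; the point is that each $\kappa_p$ can be written as $\kappa_p=\chi_p\kappa_p$ (since $\chi_p\equiv1$ near $x_0$ and $\operatorname{supp}\kappa_p$ is small), so $\kappa_p\psi f=\chi_p\cdot(\kappa_p\psi)f$. This reduces the problem to estimating $\widehat{\chi_p\,\zeta_p f}(\mu)$ where $\zeta_p:=\kappa_p\psi$ ranges, for $\psi\in B$, over a set that is ``$(M_p)$-analytically bounded'' in the sense of Lemma~\ref{bounded}.

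Next I would pass to the periodization side, just as in Lemma~\ref{mainlemma}: fixing a fundamental region $I_{\Lambda^*}$ of $\Lambda^*$ containing $x_0+U$, the $\Lambda^*$-periodization of $\chi_p f$ has Fourier coefficients $a^{(p)}_\mu=\widehat{\chi_p f}(\mu)$, and since $(\chi_p\zeta_p f)_{\mathfrak p_{\Lambda^*}}=(\chi_p f)_{\mathfrak p_{\Lambda^*}}\cdot(\zeta_p)_{\mathfrak p_{\Lambda^*}}$, convolution on the dual lattice gives
\[
\widehat{\chi_p\zeta_p f}(\mu)=\sum_{\beta\in\Lambda}a^{(p)}_\beta\,\widehat{\zeta_p}(\mu-\beta),\qquad \mu\in\Lambda.
\]
I would then split the sum over $\beta\in\Gamma\cap\Lambda$ and $\beta\notin\Gamma\cap\Lambda$. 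On the first piece I use the hypothesis $(ii)$, i.e.\ $|a^{(p)}_\beta||\beta|^p\leq AC^pN_p$ on $\Gamma\cap\Lambda$, together with $|\mu|^p\lesssim 2^p(|\beta|^p+|\mu-\beta|^p)$ and the rapid decay of $\widehat{\zeta_p}$ from Lemma~\ref{bounded} (which gives $|\widehat{\zeta_p}(\eta)|\,|\eta|^p\lesssim C_h(Cp)^pe^{-M(|\eta|/h)}$ uniformly in $\psi\in B$); the factor $e^{-M(\cdot/h)}$ is summable over $\Lambda$ with room to spare, and the extra $(Cp)^p$ is controlled using $(M.5)$ and $(M.2)'$ for $(N_p)_p$ — this is where $p!\subset N_p$ (resp.\ $p!\prec N_p$) enters, to absorb $p^p\sim p!\,e^p$ into $N_p$ at the cost of enlarging $C$ (resp.\ for any $C$ after enlarging $A$). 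On the second piece $\beta\notin\Gamma\cap\Lambda$ and $\mu\in\Gamma_1$, the geometric separation $|\mu-\beta|\geq m^{-1}\max(|\mu|,|\beta|)$ together with the a priori polynomial-times-$e^{M(\cdot)}$ bound on $(a^{(p)}_\beta)$ coming from Remark~\ref{rFSu} (the periodization of $\chi_p f$ lies in $\mathcal S^{(M_p)\prime}(\Lambda^*)$, boundedly in $p$ since $(\chi_p)_p$ is bounded in $\mathcal D^{(M_p)}$) lets me trade the loss into $e^{-M(|\mu|/(2m h))}$ times a convergent sum over $\beta$, again with the $p$-dependence tucked into $N_p$ via $(M.5)$.

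The main obstacle I anticipate is bookkeeping the three competing $p$-dependent growth rates — the $(Cp)^p$ from the analytic cut-off/Lemma~\ref{bounded}, the target $N_p$, and the powers $|\mu|^p$ — so that the final estimate is genuinely of the form $AC^pN_p$ uniformly over $\psi\in B$ and $\mu\in\Gamma_1\cap\Lambda$, with the Roumieu/Beurling dichotomy (``for some $C$'' vs.\ ``for every $C$'') correctly propagated. The key device is to optimize over the free parameter $h>0$ in the $e^{-M(\cdot/h)}$ factors \emph{after} extracting enough decay to sum over the lattice: one keeps a fixed portion, say $e^{-\frac12 M(|\mu|/h)}$, to kill $|\mu|^p$ via $|\mu|^pe^{-\frac12M(|\mu|/h)}\leq (\tfrac{h}{2})^p\cdot 2^pM_p$-type bounds, uses another portion for lattice summability, and then invokes $(M.5)$ once at the end to pass from $M_p$-type to $N_p$-type bounds. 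In the Beurling case one must additionally use $p!\prec N_p$ so that the constants $C_h$ may be chosen freely small; in the Roumieu case the constants $A,C$ are simply allowed to depend on $B$. Apart from this careful constant-chasing, every step is a routine adaptation of techniques already used in the proofs of Lemma~\ref{mainlemma} and Lemma~\ref{bounded}.
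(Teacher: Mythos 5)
Your overall architecture is the same as the paper's: introduce a second analytic cut-off sequence $(\kappa_p)_p$ supported in a smaller neighborhood where $\chi_p\equiv 1$, so that $\kappa_p\psi f=(\kappa_p\psi)(\chi_p f)$, periodize with respect to $\Lambda^*$, write $\widehat{\kappa_p\psi f}(\mu)$ as a discrete convolution, split according to whether the frequency of $\widehat{\chi_p f}$ lies in $\Gamma$, use the a priori bound $|\widehat{\chi_p f}(\xi)|\leq D e^{M(|\xi|/h)}$ off $\Gamma$ together with the cone separation, and convert the $(Cp)^p$ from Lemma \ref{bounded} into $N_p$ via Stirling and $(M.5)$. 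All of that matches the paper.

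There is, however, one concrete step in your plan that fails: your proposed mechanism for disposing of the power $|\mu|^p$ on the ``bad'' piece. You suggest keeping a portion $e^{-\frac12 M(|\mu|/h)}$ of the decay and using $|\mu|^p e^{-\frac12 M(|\mu|/h)}\lesssim h'^p M_p$, then ``invoking $(M.5)$ to pass from $M_p$-type to $N_p$-type bounds.'' Condition $(M.5)$ compares $p!$ with $N_p$; it says nothing about $M_p$ versus $N_p$, and no relation $M_p\subset N_p$ is assumed anywhere. In fact it typically fails in the intended applications: $(M.3)'$ forces $M_p$ to grow strictly faster than $p!$, so for the analytic wave front set (Theorem \ref{th WF analytic}, where $N_p=p!$) a bound of the form $AC^pM_p$ is genuinely weaker than the required $AC^pN_p$ and cannot be repaired afterwards. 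The associated function $M$ must never be used to absorb $|\mu|^p$. The correct device, which the paper uses, is to transfer the power onto the frequency variable of $\widehat{\kappa_p\psi}$: on the piece where $|\beta|\geq c|\mu|$ one writes $|\mu|^p\leq c^{-p}|\beta|^p$ (in your indexing, $|\mu|^p\leq m^p|\mu-\beta|^p$), and then Lemma \ref{bounded} bounds $|\beta|^p|\widehat{\kappa_p\psi}(\beta)|e^{M(|\beta|/h')}$ by $C_{h'}(Cp)^p$, with enough residual $e^{-M(\cdot)}$ decay (after one application of $(M.2)'$, i.e.\ \cite[Prop.~3.4]{Komatsu}) to dominate $e^{M((1+c^{-1})|\beta|/h)}$ and to sum over the lattice; only the factor $(Cp)^p\sim C^pe^pp!$ then needs to be converted to $N_p$, which is exactly what $(M.5)$ provides. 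Your treatment of the ``good'' piece (splitting $|\mu|^p\lesssim 2^p(|\beta|^p+|\mu-\beta|^p)$ and using $|a^{(p)}_\beta|\leq A$, which follows from the $p=0$ case of the hypothesis) is fine, if slightly more roundabout than the paper's direct use of $|\beta|\geq(1-c)|\mu|$.
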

\begin{proof}
We choose an open conic neighborhood $\Gamma_1$ of $\xi_0$ such that $\overline{\Gamma}_1 \subseteq \Gamma \cup \{ 0 \}$. Let  $0 < c < 1$ be smaller than the distance between $\partial \Gamma$ and the intersection of $\Gamma_1$ with the unit sphere. Hence $\{y \in \mathbb{R}^d \,:\, (\exists \xi \in \Gamma_1)(|\xi -y|\leq c|\xi|) \} \subseteq \Gamma$. Next, choose an open neighborhood $U_1 \subseteq U$ of the origin such that $\chi_p \equiv  1$ on  $x_0 + U_1$ for all $p \in \mathbb{N}$. Let  $(\kappa_p)_p$  be an analytic cut-off sequence for $x_0$ supported in $x_0 + U_1$. Let $B$ be a bounded set in $\mathcal{E}^{(p!)}(\mathbb{R}^d)$. Since $x_0+U$ is contained in some fundamental region $I_{\Lambda^{\ast}}$ of the dual lattice $\Lambda^*$,we have that for an arbitrary $g \in \mathcal{E}^{(M_p)'}(\mathbb{R}^{d})$ having support in  the interior of $I_{\Lambda^{\ast}}$ the Fourier coefficients of its $\Lambda^*$-periodization are given by $(\widehat{g}(\mu))_{\mu\in\Lambda}$. In particular, using the fact that $(\kappa_
 p \psi f
 )_{\mathfrak{p}_{\Lambda^*}}= (\kappa_p\psi)_{\mathfrak{p}_{\Lambda^*}}(\chi_p f)_{\mathfrak{p}_{\Lambda^*}}$ for all $p \in \mathbb{N}$ and $\psi \in B$, we obtain that
\[
 |\widehat{\kappa_p \psi f} (\mu)| = \left| \sum_{\beta \in \Lambda} \widehat{\chi_pf}(\mu - \beta) \widehat{\kappa_p\psi}(\beta)\right| \leq I_{1,p}(\mu) + I_{2,p}(\mu),
\]
where
\[
I_{1,p}(\mu) = \underset{\beta \in \Lambda}{\sum_{|\beta| \leq c|\mu|}} |\widehat{\chi_pf}(\mu-\beta)| |\widehat{\kappa_p\psi}(\beta)|
\]
and 
\[
I_{2,p}(\mu) = \underset{\beta \in \Lambda}{\sum_{|\beta| > c|\mu|}} |\widehat{\chi_pf}(\mu-\beta)| |\widehat{\kappa_p\psi}(\beta)|
.
\]
Due to the fact that the set $\{\kappa_p\psi \,: \, p \in \mathbb{N}, \psi \in B\}$ is bounded in $\mathcal{D}^{(M_p)}(\Omega)$, there exists $D > 0$, independent of $p$ and $\psi$, such that
\[ I_{1,p}(\mu) = \underset{\beta \in \Lambda}{\sum_{|\mu-\beta| \leq c|\mu|}} |\widehat{\chi_pf}(\beta)| |\widehat{\kappa_p\psi}(\mu-\beta)| \leq D \underset{\beta \in \Lambda}{\sup_{|\mu-\beta| \leq c|\mu|}} |\widehat{\chi_pf}(\beta)| 
 \]
 Since $|\mu-\beta| \leq c|\mu|$ implies $|\beta| \geq (1-c)|\mu|$, we have that
 \begin{align*}
  \sup_{\psi \in B}\sup_{\mu \in \Gamma_1 \cap \Lambda} I_{1,p}(\mu) |\mu|^p &\leq D \sup_{\mu \in \Gamma_1 \cap \Lambda} |\mu|^p \sup_{\beta \in \Lambda}\sup_{|\mu-\beta| \leq c|\mu|} |\widehat{\chi_pf}(\beta)|\\
  & \leq \frac{D}{(1-c)^p} \sup_{\beta \in \Gamma \cap \Lambda}|\beta|^p|\widehat{\chi_pf}(\beta)| \\
  & \leq D A \left(\frac{C}{1-c} \right)^p N_p.
 \end{align*}
We now estimate $I_{2,p}(\mu)$.  The boundedness of $(\chi_p)_p$ implies that there are $D', h > 0$ such that for all $p  \in \mathbb{N}$
 \[  |\widehat{\chi_pf}(\xi)| \leq D' e^{M(|\xi|/h)}, \qquad \forall \xi \in \mathbb{R}^d.\]
Since $|\mu-\beta| \leq (1+ c^{-1})|\beta|$ for $|\beta| \geq c |\mu|$, we have that
\begin{align*}
\sup_{\psi \in B}\sup_{\mu \in \Gamma_1 \cap \Lambda} I_{2,p}(\mu) |\mu|^p &\leq D'  \sup_{\psi \in B}\sup_{\mu \in \Gamma_1 \cap \Lambda} |\mu|^p \underset{\beta \in \Lambda}{\sum_{|\beta| \geq c|\mu|} }e^{M(|\mu-\beta|/h)}|\widehat{\kappa_p\psi}(\beta)|  
\\
&
\leq   \frac{D'}{c^p} \sup_{\psi \in B}\sum_{\beta \in \Lambda} e^{M((1+ c^{-1})|\beta|/h)} |\beta|^p|\widehat{\kappa_p\psi}(\beta)|. 
 \end{align*}
 The result now follows from Lemma \ref{bounded}, \cite[Prop. 3.4]{Komatsu}, and the assumption $(M.5)$.
\end{proof}
\begin{proof}[Proof of Theorem \ref{mainqa}.] The implications $(iii) \Rightarrow (iv)$ and 
$(iv) \Rightarrow (i)$, $(v)\Rightarrow(i)$, and $(vi) \Rightarrow(v)$ are trivial. The proof of
$(i) \Rightarrow (ii)$ is similar to the first part of that of \cite[Lemma 8.4.4]{Hormander}, so we omit it. Let us now show
$(ii) \Rightarrow (iii)$. For it,  let $I_\Lambda$ be a fundamental region for $\Lambda$ with $0 \in I_\Lambda
$ and let $\Gamma_1$ and $(\kappa_p)_p$ be as in Lemma \ref{lemmaqa}. Choose an open conic neighborhood $\Gamma_2$ of $\xi_0$ such that $\overline{\Gamma}_2 \subseteq \Gamma_1 \cup \{ 0 \}$.  Fix $r > 0$ such
that $\Gamma_2 \cap \{ \xi \in \mathbb{R}^d \, | \, |\xi| \geq r \} \subseteq (\Gamma_1 \cap \Lambda) + I_\Lambda$. Since $I_{\Lambda}$ is bounded, there is $D > 0$ such that $|\mu+t|^p \leq D^p |\mu|^p$ for all $p \in \mathbb{N}$, $t \in I_\Lambda$ and $\mu \in \Lambda\setminus\{0\}$.
Hence Lemma \ref{lemmaqa} and the fact that $B = \left \{ e_{-t} \, | \, t \in I_\Lambda \right\}$ is a bounded subset of $\mathcal{E}^{(p!)}(\mathbb{R}^d)$ imply that
\begin{align*}
\sup_{\xi \in \Gamma_2, |\xi| \geq r } |\widehat{\kappa_p f}(\xi)||\xi|^p & \leq \sup_{\mu \in \Gamma_1 \cap \Lambda} \sup_{t \in I_\Lambda}  |\widehat{\kappa_p f}(\mu+t)||\mu+t|^p \\
&\leq D^p\sup_{t \in I_\Lambda}\sup_{\mu \in \Gamma_1 \cap \Lambda}|\mathcal{F}(\kappa_p e_{-t} f)(\mu)||\mu|^p \leq A(DC)^pN_p.
\end{align*}

It remains to establish $(iv) \Rightarrow(vi)$ under the additional assumption that $(N_p)_p$ satisfies $(M.3)'$. So assume $(iv)$ and let $\Gamma_1$ and $c$ be as in the proof of Lemma  \ref{lemmaqa}. We use the auxiliary weight sequence $Q_p=\min_{0\leq q\leq p}M_q N_{p-q}$, with associated function $Q$. A result by Roumieu (\cite[Lemm. 3.5]{Komatsu}) ensures that $(Q_p)_p$ satisfies $(M.1)$ and that $Q=M+N$; thus \cite[Prop. 3.4]{Komatsu} implies that $(Q_p)_p$ also fulfills $(M.2)'$. Select any $V_1$ with $\overline{V}_1\Subset V$ and $\varphi\in \mathcal{D}^{(Q_p)}(V)$ with $\varphi\equiv1$ on $V_1$. Furthermore, using the boundedness of $(f_p)_{p}$, find constants $D, h>0$ such that 
 \[ \|\widehat{\varphi}\|_{L^{1}}\leq D \quad \mbox{and} 
 \quad \sup_{\xi\in\mathbb{R}^{d},\: p\in\mathbb{N} } e^{-M(|\xi|/h)}|\widehat{f_p}(\xi)| \leq D.
 \]
Set $R=\max{\{ h^{-1}(1+ c^{-1}),(cC)^{-1}(1-c)\}}.$ Reasoning as in the proof of Lemma \ref{lemmaqa}, we obtain, for all $p\in\mathbb{N}$ and $\xi\in \Gamma_1$,
\begin{align*}
\left| \xi\right|^{p}|\widehat{\varphi f}(\xi)|&
=\left| \xi\right|^{p}|\widehat{\varphi f_p}(\xi)|
\\
&\leq
DA \left(\frac{C}{1-c}\right)^{p} N_p+DN_p\int_{|\eta| > c |\xi| }e^{M(h^{-1}|\xi - \eta|)}\frac{|\eta|^p}{c^{p}N_p} |\widehat{\varphi}(\eta)|\mathrm{d}\eta
\\
&\leq
D\left(\frac{C}{1-c}\right)^{p} N_p\left(A +\int_{\mathbb{R}^d}e^{M(h^{-1}(1+ c^{-1})|\eta|)+N((cC)^{-1}(1-c)|\eta|)} |\widehat{\varphi}(\eta)| \mathrm{d}\eta\right)
\\
&\leq
D\left(\frac{C}{1-c}\right)^{p} N_p\left(A +\int_{\mathbb{R}^d}e^{Q(R|\eta|)} |\widehat{\varphi}(\eta)|\mathrm{d}\eta \right).
\end{align*}
The last integral is finite because of \cite[Prop. 3.4]{Komatsu} and so $\displaystyle\sup_{\xi \in \Gamma_1} |\widehat{\varphi f}(\xi)| e^{N(r\xi)} < \infty$, with $r=(1-c)/C$.
\end{proof}

We conclude the article with some remarks.
\begin{remark}\label{rk1Fs} Let $f\in {\mathcal{D}^{\{M_p\}}}'(\Omega)$ ($f\in \mathcal{D}'(\Omega)$). Then, the requirements on the sequence $(\chi_p)_p$ and the test function $\varphi$ from Theorem \ref{mainqa} can be relaxed to:   $(\chi_p)_p$ is an $\{M_p\}$-analytic (analytic) cut-off sequence  for $x_0$ and $\varphi \in \mathcal{D}^{\{M_p\}}(x_0+U)$ ($\varphi \in \mathcal{D}(x_0+U)$). Furthermore, the properties of $(f_p)_p$ in condition $(i)$ of Theorem \ref{mainqa} and in the definition of $\{N_p\}$- and $(N_p)$-microregularity  can be strengthened to: $(f_p)_p$ is bounded in ${\mathcal{E}^{\{M_p\}}}'(x_0+U)$ (in $\mathcal{E}'(x_0+U)$) and ${\mathcal{E}^{\{M_p\}}}'(\Omega)$ ($\mathcal{E}'(\Omega)$), respectively. The proofs of these assertions are straightforward modifications of the arguments given in this section and are therefore left to the reader.
\end{remark}

\begin{remark}
If the sequence  $(M_p)_p$ additionally satisfies $(M.4)''$, then one might simply ask $\varphi(x_0)\neq0$ for $\varphi \in \mathcal{D}^{(M_p)}(x_0+U)$ ($\varphi \in \mathcal{D}^{\{M_p\}}(x_0+U)$ if $f\in {\mathcal{D}^{\{M_p\}}}'(\Omega)$)  occurring in $(v)$ from Theorem \ref{mainqa}. In fact, all this is a just consequence of the fact that $\mathcal{E}^{\{M_p\}}(\Omega)$ and $\mathcal{E}^{(M_p)}(\Omega)$ become inverse closed under $(M.4)''$.  In particular, that is the case when $(M_p)_p$ fulfills $(M.3)$, as already pointed out in Subsection \ref{subsection sequences}. 
\end{remark}

\end{document}